\theoremstyle{plain}
\newtheorem{tw}{Theorem}[section]
\newtheorem {lem} [tw]{Lemma}
\newtheorem {prop}[tw] {Proposition}
\newtheorem{cor}[tw]{Corollary}
\newtheorem*{theorem*}{Theorem}
\theoremstyle{definition}
\newtheorem {deft}[tw] {Definition}
\newtheorem {rem} [tw]{Remark}
\newtheorem{exam}[tw]{Example}
\newcommand{\bc}{\mathbb C}
\newcommand{\bn}{\mathbb N}
\newcommand{\br}{\mathbb R}
\newcommand{\Irr}{\textup{Irr}}
\newcommand{\Pol}{\textup{Pol}}
\newcommand{\bt}{\mathbb T}
\newcommand{\bz}{\mathbb Z}
\newcommand{\alg} {\mathsf{A}}
\newcommand{\blg} {\mathsf{B}}
\newcommand {\id} {{\operatorname{id}}}
\newcommand{\Alg} {\mathcal{A}}
\newcommand{\wot} {\overline{\ot}}
\newcommand{\tu}{\textup}
\newcommand{\Hil}{\mathsf{H}}
\newcommand{\QG}{\mathbb{G}}
\newcommand{\RepGH}{\tu{Rep}_{\QG}(\Hil)}
\newcommand{\Cou}{\epsilon}
\newcommand{\Com}{\Delta}
\newcommand{\Ind}{\mathcal{I}}
\newcommand{\Ker}{\tu{Ker}\,}
\newenvironment{rlist}
{

\begin{enumerate}}
{\end{enumerate}}
\newcommand{\M}{\mathsf{M}}
\newcommand{\ot}{\otimes}
\numberwithin{equation}{section}
\newcommand{\ZZ}{\mathbb{Z}}
\newcommand{\hh}[1]{\widehat{#1}}
\newcommand{\G}{\mathbb{G}}
\newcommand{\QH}{\mathbb{H}}
\renewcommand{\H}{\mathbb{H}}
\newcommand{\tp}{\xymatrix{*+<.7ex>[o][F-]{\scriptstyle\top}}}
\newcommand{\rep}{\operatorname{Rep}}
\newcommand{\wkcon}{\preccurlyeq}
\newcommand{\Wuniv}{\mathds{W}}
\newcommand{\sH}{\mathsf{H}}
\newcommand{\mc}{\mathcal}
\newcommand{\mf}{\mathfrak}
\newcommand{\ip}[2]{\langle #1,#2 \rangle}
\newcommand{\op}{{\operatorname{op}}}
\newcommand{\WW}{{\mathds{V}\!\!\text{\reflectbox{$\mathds{V}$}}}}
\newcommand{\Ww}{\mathds{W}}
\newcommand{\wW}{\text{\reflectbox{$\Ww$}}\:\!} 
\keywords{Haagerup approximation property; locally compact quantum group; mixing representation; completely positive multipliers; free product of discrete quantum groups}
\subjclass[2010]{Primary 46L53, Secondary 22D10, 46L09, 46L65, 46L87}
\begin{document}

\author{Matt Daws}
\address{School of Mathematics, University of Leeds, Leeds LS2 9JT, United Kingdom}
\email{matt.daws@cantab.net}
\thanks{MD is partially supported by EPSRC grant EP/IO26819/1.}
\author{Pierre Fima}
\address{Universit\'e Denis-Diderot - Paris 7, Institut Math\'ematiques de Jussieu,  175, rue du Chevaleret, 75 013 Paris, France}
\email{pfima@math.jussieu.fr}
\thanks{PF is partially supported by the ANR grants NEUMANN and OSQPI}
\author{Adam Skalski}
\address{Institute of Mathematics of the Polish Academy of Sciences,
ul.~\'Sniadeckich 8, 00--956 Warszawa, Poland \newline \indent Faculty of Mathematics, Informatics and Mechanics, University of Warsaw, ul.~Banacha 2,
02-097 Warsaw, Poland}
\thanks{AS is partially supported by the Iuventus Plus grant IP2012 043872.}
\email{a.skalski@impan.pl}
\author{Stuart White}
\address{ School of Mathematics and Statistics, University of
Glasgow, University Gardens, Glasgow, G12 8QW, Scotland}
\email{stuart.white@glasgow.ac.uk}
\thanks{SW is partially supported by EPSRC grant EP/IO19227/1-2.}

\title{The Haagerup property for locally compact quantum groups}

\begin{abstract}
The Haagerup property for locally compact groups is generalised to the context of locally compact quantum groups, with several equivalent characterisations in terms of the unitary representations and positive-definite functions established. In particular it is shown that a locally compact quantum group $\QG$ has the Haagerup property if and only if its mixing representations are dense in the space of all unitary representations. For discrete $\QG$ we characterise the  Haagerup property by the existence of a symmetric proper conditionally negative functional on the dual quantum group $\widehat{\QG}$; by the existence of a real proper cocycle on $\QG$, and further, if $\QG$ is also unimodular we show that the Haagerup property is a von Neumann property of $\QG$. This extends results of Akemann, Walter, Bekka, Cherix, Valette, and Jolissaint to the quantum setting
and provides a connection to the recent work of Brannan. We use these characterisations to show that the Haagerup property is preserved under free products of discrete quantum groups.
\end{abstract}

\maketitle

The Haagerup property of locally compact groups has its origins in Haagerup's fundamental paper \cite{Haagerup}, which establishes that the length function on the free group  $\mathbb{F}_n$ is conditionally negative-definite and uses this to obtain approximation properties for the reduced $C^*$-algebra, $C^*_r(\mathbb {F}_n)$. For a locally compact group $G$, the key ingredient in \cite{Haagerup} is now known to be equivalent to several conditions (see \cite{AW,Jol0,book}) which define the \emph{Haagerup property}:
\begin{itemize}
\item $G$ has the Haagerup property if it admits a mixing unitary representation which weakly contains the trivial representation;
\item $G$ has the Haagerup property if there is a proper, continuous conditionally negative definite function $G\rightarrow\mathbb C$;
\item $G$ has the Haagerup property if there is a normalised sequence of continuous, positive definite functions vanishing at infinity which converges uniformly to $1$ on compact subsets of $G$;
\item $G$ has the Haagerup property if there is a proper continuous affine action of $G$ on a real Hilbert space, or equivalently, $G$ admits a proper continuous cocycle for some representation of $G$ on a real Hilbert space.
\end{itemize}

The Haagerup property can also be defined for von Neumann algebras (see \cite{j}, for example) and, for  discrete $G$, the Haagerup property is a von Neumann property: $G$ enjoys the Haagerup property precisely when the group von Neumann algebra $VN(G)$ does (\cite{choda}).

The Haagerup property is often interpreted as a weak form of amenability. Indeed, the left regular representation of an amenable group is mixing (i.e. its matrix coefficients vanish at infinity) and weakly contains the trivial representation, so amenable groups have the Haagerup property.   Other examples include free groups (via Haagerup's original paper \cite{Haagerup}), finitely generated Coxeter groups, $SL(2,\bz)$, $SO(1,n)$ and $SU(1,n)$; moreover the Haagerup property is preserved by free products.  Striking applications include Higson and Kasparov's proof of the Baum-Connes conjecture in the presence of the Haagerup property (\cite{HK}), and Popa's deformation-rigidity approach to structural properties of type II$_1$ factors (\cite{Po1,Po2}).  We refer to  \cite{book} for the equivalence of the formulations above, examples and applications.

In this paper we undertake a systematic study of the Haagerup property in the setting of locally compact quantum groups.   The theory of topological quantum groups has developed rapidly in the last twenty years through  Woronowicz's definition of a compact quantum group (\cite{wor}) and the locally compact quantum group framework of Kustermans and Vaes (\cite{KV}). Using the language of operator algebras, the latter theory  generalises the classical theory of locally compact groups, providing a full extension of the Pontryagin duality for locally compact abelian groups, and encompasses a large class of examples.  A ``locally compact quantum group'' $\QG$ is studied via its ``algebras of functions'': the von Neumann algebra $L^{\infty}(\QG)$ and the $C^*$-algebra $C_0(\QG)$ correspond to the algebras of essentially bounded (with respect to the Haar measure) functions on a locally compact group $G$ and the continuous functions on $G$ which vanish at infinity respectively. As the theory reached a certain level of maturity, it became natural to investigate questions relating quantum groups to noncommutative probability, noncommutative geometry, and analyse actions of quantum groups. In particular a study of approximation-type/geometric properties such as amenability (\cite{BT} and references there) or property (T) (\cite{PierreT}, \cite{ks}) has recently been initiated, often with a special focus on the case of discrete quantum groups. Recently Brannan established the Haagerup property for the von Neumann algebras associated to free orthogonal and unitary quantum groups \cite{Brannan} (analogous results have also been obtained for the von Neumann algebras associated to certain quantum automorphism groups \cite{Brannan2}, and quantum reflection groups \cite{Lemeux}). Within the theory of quantum groups, $VN(G)$ is interpreted as the algebra $L^{\infty}(\widehat{G})$, where $\widehat{G}$ is the dual quantum group of $G$. Thus, by analogy with the fact that the Haagerup property is a von Neumann property of a discrete group \cite{choda}, Brannan's result can be viewed as the statement that the discrete dual quantum groups of the free orthogonal and unitary quantum groups have the Haagerup property. It was shown in \cite{Voigt}  that the duals of  free orthogonal groups satisfy an appropriate version of the Baum-Connes conjecture.

The papers \cite{Brannan,Brannan2} study the Haagerup approximation property for $L^\infty(\G)$, where $\G$ is a discrete unimodular quantum group. However it would not be satisfactory to define the Haagerup property for general $\hh\G$ by asking for $L^\infty(\G)$ to have the Haagerup property: this is not phrased intrinsically in terms of properties of $\hh\G$, and, more importantly, is problematic when $L^\infty(\G)$ does not admit a finite faithful trace.  Thus our starting point is the classical definition in terms of the existence of a mixing representation weakly containing the trivial representation.  We translate this into the quantum setting and show that it is equivalent to the existence of an approximate identity for $C_0(\QG)$ arising from completely positive multipliers.  We set out how the Haagerup property can be viewed through the lens of global properties of representations: when $\QG$ is second countable, the Haagerup property is equivalent to the density of mixing representations in the collection of all unitary representations of $\QG$ on a fixed infinite dimensional separable Hilbert space.  This extends the philosophy of \cite{Kechris} to the quantum setting and generalises work of Bergelson and Rosenblatt \cite{BR}.

In the case when $\QG$ is discrete we give three further characterisations of the Haagerup property, summerised below. The precise terminology will be described in Sections \ref{sec:haaprop} and \ref{disc} and the theorem is obtained by combining Theorems~\ref{groupalg}, \ref{groupalgKac}, \ref{generators} and ~\ref{HAP-cocycle}. Note that we do not require $\QG$ to be unimodular in the equivalence of (i)-(iv).
\begin{theorem*}
Let $\QG$ be a discrete quantum group. Then the following conditions are equivalent:
\begin{rlist}
\item $\QG$ has the Haagerup property;
\item there exists a convolution semigroup of states $(\mu_t)_{t\geq 0}$ on $C_0^u(\widehat{\QG})$ such that each $a_t:=(\mu_t \ot \id)(\hh\Wuniv)$ is an element of $C_0(\QG)$ and $a_t$ tends strictly to $1$ as $t\to 0^+$;
\item $\widehat{\QG}$ admits a symmetric proper generating functional;
\item $\QG$ admits a proper real cocycle.
\end{rlist}
Further, (i)-(iv) imply the following condition, and, for unimodular $\QG$,
are equivalent to it:
\begin{rlist}
\item[(v)] $L^{\infty}(\hh \QG)$ has the Haagerup approximation property.
\end{rlist}
\end{theorem*}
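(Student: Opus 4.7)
The plan is to arrange (i)-(iv) into a cycle of implications and then derive (v) separately, with the only non-automatic step being the converse (v) $\Rightarrow$ (i) available when $\QG$ is unimodular. The common thread is the dictionary, specific to discrete $\QG$, between completely positive Fourier multipliers of $C_0(\QG)$ and states $\mu$ on $C_0^u(\widehat{\QG})$ via $\mu \mapsto (\mu \otimes \id)(\hh\Wuniv)$: those multipliers which lie in $C_0(\QG)$ correspond to what I will call $C_0$-states. The general (not-necessarily-discrete) results established earlier in the paper then recast (i) as the existence of a net of $C_0$-states whose associated multipliers approximate $1$ strictly.

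To go from (i) to (ii) I would use a convolution exponential. Starting from an approximating $C_0$-state $\mu$ close to the counit $\epsilon$, set $L := \epsilon - \mu$ and
\[
  \mu_t := \exp_*(-tL) = e^{-t} \sum_{n \geq 0} \frac{t^n}{n!}\, \mu^{*n},
\]
which is a Poisson-type convex combination of convolution powers of $\mu$, hence a state, and which satisfies the semigroup law $\mu_{t+s} = \mu_t * \mu_s$. That $a_t \in C_0(\QG)$ follows from $C_0(\QG)$ being stable under the multiplications arising from convolution of $C_0$-states, and strict convergence $a_t \to 1$ as $t \to 0^+$ follows from the approximation property of $\mu$. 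The implication (ii) $\Rightarrow$ (i) is then immediate, since $(\mu_t)_{t>0}$ furnishes the required approximating net.

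The equivalences (ii) $\Leftrightarrow$ (iii) and (iii) $\Leftrightarrow$ (iv) come from the standard quantum L\'evy process machinery. Any pointwise norm-continuous convolution semigroup of states on $C_0^u(\hh\QG)$ with $\mu_0 = \epsilon$ admits a generator $L$, densely defined on $\Pol(\hh\QG)$, which is a symmetric, hermitian, conditionally positive generating functional; conversely any symmetric generating functional integrates to such a semigroup via a Schoenberg-type functional calculus on characters of irreducible corepresentations. Properness of $L$ on $\widehat{\QG}$ matches the condition $a_t \in C_0(\QG)$, as both amount to local finiteness of the spectral data of $L$ on the block decomposition $\Pol(\hh\QG) \cong \bigoplus_\pi B(H_\pi)$. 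A Schürmann-style GNS construction applied to $L$ then produces a representation $\pi$ on a real Hilbert space (reality coming from symmetry of $L$) together with a $\pi$-cocycle $c$ whose length squared recovers $L$ up to counit corrections; properness of $c$ is exactly properness of $L$. Running the construction in reverse rebuilds $L$, and hence the semigroup, from any given proper real cocycle, closing the cycle.

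For (i)-(iv) $\Rightarrow$ (v): each $C_0$ CP Fourier multiplier dualises to a normal CP map on $L^\infty(\widehat{\QG})$ whose natural $L^2(\widehat{\QG})$ implementation is compact (the $C_0$ property on the $\QG$ side becomes compactness on the dual side, since $a_t \in c_0(\QG) \subseteq B(L^2(\widehat{\QG}))$ acts as a compact Fourier multiplier), and strict convergence supplies the pointwise approximation of the identity required for the HAP of $L^\infty(\widehat\QG)$. The genuine difficulty is the converse (v) $\Rightarrow$ (i) under unimodularity: given HAP-approximating normal CP maps $\Phi_n$ on $L^\infty(\hh\QG)$ with $L^2$-compact extensions converging pointwise to $\id$, one must manufacture CP multipliers on $C_0(\QG)$. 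The strategy is Jolissaint-type averaging: unimodularity supplies a faithful tracial state on $L^\infty(\hh\QG)$, which is used to average each $\Phi_n$ against itself to produce a map which intertwines the coproduct on $L^\infty(\hh\QG)$ and is therefore a Fourier multiplier. The subtle technical point — and the main obstacle of the whole theorem — is to verify that this averaging preserves both compactness (hence the $C_0$ property of the resulting multiplier) and the approximation of the identity; this is where the tracial structure and a Powers-St\o rmer-type control on the $L^2$-extensions enter crucially, and is precisely the reason the equivalence with (v) can only be established in the unimodular setting with present techniques.
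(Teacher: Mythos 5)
Your overall architecture (a cycle through (i)--(iv) via the state/multiplier dictionary, then (v) separately with Kraus--Ruan/Jolissaint-type averaging for the unimodular converse) matches the paper, and your treatment of (i)$\Rightarrow$(v) and of the averaging strategy for (v)$\Rightarrow$(i) is essentially the argument of Theorems \ref{groupalg} and \ref{groupalgKac}. But the step (i)$\Rightarrow$(ii) as you propose it fails, and this is the heart of the theorem.

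The single-state Poisson exponential does not produce a $C_0$ semigroup. If $L=\epsilon-\mu$ then $\mu_t=e^{-t}\sum_{n\geq 0}\frac{t^n}{n!}\mu^{*n}$, and since $\mc F$ is a homomorphism from the convolution algebra into $\M(c_0(\G))$, the $\alpha$-block of $a_t$ is $e^{-t}e^{t(\mc F\mu)^\alpha}$. The $n=0$ term contributes $e^{-t}\cdot 1$, which is not in $c_0(\G)$; equivalently, since $(\mc F\mu)^\alpha\to 0$ as $\alpha\to\infty$, one has $\|(a_t)^\alpha\|\to e^{-t}>0$, so $a_t\notin C_0(\G)$ for any $t>0$ (unless $\G$ is finite). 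The underlying obstruction is structural: properness of a generating functional requires $L^\alpha\geq M I_{n_\alpha}$ outside a finite set for \emph{every} $M$, i.e.\ $L$ must be unbounded, whereas $\epsilon-\mu$ satisfies $\|L^\alpha\|\leq 2$ for all $\alpha$. The paper's construction (Theorem \ref{generators}, (i)$\Rightarrow$(iii)) instead takes an exhausting sequence $(F_n)$ of finite subsets of $\Irr_{\hh\G}$, weights $\beta_n\to\infty$ and tolerances $\epsilon_n$ with $\sum_n\beta_n\epsilon_n<\infty$, chooses states $\mu_n$ with $\|I_{n_\alpha}-(\mc F\mu_n)^\alpha\|\leq\epsilon_n$ on $F_n$, and sets $L=\sum_n\beta_n(\epsilon-\mu_n)$; the summability condition makes $L$ well defined and the growth of $\beta_n$ makes it proper. (Your bounded Poisson generator does appear in the paper, but only as a perturbation added to an already proper $L$ in Lemmas \ref{lem:boundedperturbstates} and \ref{lem:boundedperturb}.)

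A second, smaller gap: symmetry. You assert that the generator of a semigroup as in (ii) is symmetric, and that reality of the cocycle "comes from symmetry of $L$", but you never produce the symmetry. A convolution semigroup of states need not have an $\hh S_u$-invariant generator, and the paper's cycle of implications deliberately routes (ii)$\Rightarrow$(iii) through (i) for this reason. Getting symmetric data out of the Haagerup property requires the averaging arguments of Propositions \ref{prop:Rinv} (symmetrising by $\hh R_u$) and \ref{prop:Sinv} (averaging over the scaling group against an invariant mean to upgrade $\hh R_u$-invariance to $\hh S_u$-invariance while controlling $\|(\mc F\nu)^\alpha\|$ and $\|(\mc F\nu)^\alpha-I_{n_\alpha}\|$). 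Without this, the passage to a \emph{symmetric} proper generating functional, and hence to a \emph{real} proper cocycle via the identity $(c^\alpha)^*c^\alpha=L^\alpha+(L^\alpha)^*$, is not justified.
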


The equivalence of (i) and (iii) extends a classical result of Jolissaint on conditionally negative-definite functions; the equivalence of (i) and (iv) is a quantum version of the result of Bekka, Cherix and Valette from \cite{bcv}, which is the starting point of the geometric interpretations of the Haagerup property. In the unimodular case, the final statement generalises Choda's work and justifies our interpretation of the results in \cite{Brannan} described above.

In the last section we use the characterisations obtained earlier, together with the theory of conditionally free products of states, to prove that the Haagerup property is preserved under taking free products of discrete quantum groups.  The techniques used in  this article are based on the analysis of various properties of unitary representations of locally compact quantum groups, on applications of completely positive multipliers of locally compact quantum groups, as studied for example in \cite{jnr} and \cite{daws}, and on certain results concerning the convolution semigroups of states on compact quantum groups and their generating functionals (see \cite{LS}).

The detailed plan of the paper is as follows: Section 1 introduces notation and terminology, and proves an $L^2$-implementation results for certain completely positive multipliers. Section 2 is devoted to the analysis of containment and weak containment of representations of a given locally compact quantum group $\QG$; then in a short Section 3 we define, by analogy with the classical context, mixing representations and set out some of their properties. In Section 4 we equip the space of representations of $\QG$ on a fixed Hilbert space with a Polish topology. Section 5 introduces the Haagerup property for a locally compact quantum group and presents the first part of the main results of the paper (Theorem \ref{equivHAP}). In Section 6 we specialise to discrete quantum groups and prove the theorem above. Finally in Section 7 we apply the earlier results and use a construction of conditionally free products of states to prove that the Haagerup property is preserved under free products of discrete quantum groups. We also give some generalisations of the last result concerning free products with amalgamation over a finite quantum subgroup and  quantum HNN extensions.

\subsection*{Acknowledgements} Some work on this paper was undertaken during a visit of AS and SW to the University of Leeds in June 2012, funded by EPSRC grant EP/I026819/I. They thank the faculty of the School of Mathematics for their hospitality.  The authors would also like to thank Jan Cameron, Caleb Eckhardt, David Kyed, Roland Vergnioux and the anonymous referee for valuable comments and advice.

\section{Notation, terminology and some technical facts} \label{not}

Scalar products (both for Hilbert spaces and Hilbert modules) will be linear on the left. The symbol $\ot$ denotes the spatial tensor product of $C^*$-algebras, and if $\alg$ is a $C^*$-algebra then $\M(\alg)$ denotes its multiplier algebra; a \emph{morphism} between $C^*$-algebras $\alg$ and $\blg$ is a nondegenerate $^*$-homomorphism from $\alg$ to $\M(\blg)$, and we write $\textup{Mor}(\alg,\blg)$ for the collection of all these morphisms (see \cite{lance} for a treatment of multiplier $C^*$-algebras and corresponding morphisms). For a Hilbert space $\Hil$ the symbol $\mc K(\Hil)$ will denote the algebra of compact operators on $\Hil$ and if $\xi, \eta \in \Hil$, then $\omega_{\xi,\eta}\in \mc K(\Hil)^*$ will be the vector functional,
$T\mapsto (T\xi|\eta)$.

For a $C^*$-algebra $\alg$ and a Hilbert space $\sH$, the  Hilbert
$C^*$-module $\alg\otimes\sH$ is the completion of the algebraic tensor
product $\alg \odot \sH$ in the $\alg$-valued inner-product $(a\otimes\xi|b\otimes\eta)
= (\xi|\eta) b^*a$, ($a,b \in \alg$, $\xi, \eta \in \Hil$).
Write $\mc L(\alg\otimes\sH)$ for the space of ``adjointable maps'' on $\alg \ot \sH$ (see \cite{lance}).

We will often use the canonical isomorphism between $\M(\alg\otimes\mc K(\Hil))$
and $\mc L(\alg\otimes\sH)$ and think of it as mapping an element  (usually a unitary) $U\in \M(\alg\otimes\mc K(\Hil))$ to an adjointable map $\mc U\in\mc L(\alg\otimes\sH)$, the relation being
\begin{equation}
(\mc U(a\otimes\xi) | b\otimes\eta)
= b^* (\id\otimes\omega_{\xi,\eta})(U) a,
\qquad (a,b\in \alg, \xi,\eta\in \sH).  \label{eq:mattthree}
\end{equation}

As is standard in the theory of quantum groups, we use the `leg' notation for operators acting on tensor products of Hilbert spaces or $C^*$-algebras.

\subsection{Locally compact quantum groups}\label{lcqgssection}

For the theory of locally compact quantum groups we refer the reader to \cite{KV} and to the lecture notes \cite{kus} (in particular we will use the conventions of these papers).  For a locally compact quantum group $\QG$ the corresponding $C^*$-algebra of ``continuous functions on $\QG$ vanishing at infinity'' will be denoted by $C_0(\QG)$. It is equipped with a \emph{comultiplication} (or \emph{coproduct}) $\Delta\in\textup{Mor}\bigl(C_0(\QG),C_0(\QG)\ot C_0(\QG)\bigr)$ and left and right Haar  weights, $\varphi$ and $\psi$. The \emph{dual locally compact quantum group} of $\QG$ will be denoted by $\hh \QG$. We usually assume (using the GNS construction with respect to the left invariant Haar weight) that both $C_0(\QG) $ and $C_0(\hh \QG)$ act on the Hilbert space $L^2(\QG)$.  The
fundamental multiplicative unitary $W \in \M(C_0(\QG)\otimes C_0(\hh\QG))$ implements the comultiplication by $\Delta(x)=W^*(1\otimes x)W$ for all $x\in C_0(\QG)$. The ``universal'' version of $C_0(\QG)$ (see \cite{ku}) will be denoted by $C_0^u(\QG)$, with the canonical \emph{reducing morphism}
$\Lambda_{\G}:C_0^u(\G)\rightarrow C_0(\G)$ and the \emph{counit} $\epsilon_u:C_0^u(\G) \to \bc$.  The von Neumann algebra generated by $C_0(\QG)$ in $\mc B(L^2(\QG))$ will be denoted by $L^{\infty}(\QG)$ and the densely defined \emph{antipode} by $S$; it maps ${\mc D}_S\subset C_0(\G)$ into $C_0(\G)$.  We shall occasionally use the strict extension of $S$ to $\M (C_0(\G))$ which has domain $\overline{{\mc D}}_S$, see \cite{kus3}. The \emph{unitary antipode} of $\QG$, which is a $^*$-anti-automorphism of $C_0(\QG)$, will be denoted by $R$.   The predual of $L^{\infty}(\QG)$ will be denoted by $L^1(\QG)$.  The pre-adjoint of the comultiplication (which extends to $L^\infty(\QG)\rightarrow L^\infty(\QG)\,\overline{\otimes}\,L^\infty(\QG)$) equips $L^1(\QG)$ with the structure of a completely contractive Banach algebra with product $*$.  The respective maps related to the dual quantum group will be adorned with hats, so that for example the right invariant weight on $\widehat{\QG}$ will be denoted by $\widehat{\psi}$. Similarly the maps acting on the universal algebra $C_0^u(\G)$ are equipped with an index $u$; for example $R_u$ is the universal version of the unitary antipode.

We say that $\QG$ is \emph{coamenable} if the reducing morphism $\Lambda_{\G}$ is an isomorphism and \emph{amenable} if $L^{\infty}(\QG)$ possesses an invariant mean, i.e. a state $m$ on $L^\infty(\QG)$ satisfying
\begin{equation}
m( (\omega\otimes\id)\Delta(x) ) = m( (\id\otimes\omega)\Delta(x) ) = \omega(1)m(x),\qquad (\omega\in L^1(\QG),\ x\in L^\infty(\QG)).
\end{equation} If the left and right Haar weights of $\QG$ coincide, we say that $\QG$ is \emph{unimodular}. In the case when $\QG$ is \emph{compact} (i.e. $C_0(\QG)$ is unital) recall that $\QG$ is \emph{Kac} if its antipode $S$ is bounded, or equivalently, $\widehat{\QG}$ is unimodular. We say that $\QG$ is finite if $C_0(\QG)$ is finite-dimensional.

For convenience, throughout this paper we will assume that \textbf{all locally compact quantum groups considered are second-countable} (by which we mean that $C_0(\QG)$ is separable).  However, we only make essential use of this assumption in Section~\ref{sec:rep_met_sp}, and again in Section~\ref{sec:semigp_conv_states} (in particular Theorem~\ref{generators}) and in Section~\ref{sec:free_prods}.

As shown by Kustermans in \cite{ku}, the multiplicative unitary $W$ admits a ``semi-universal'' version,
a unitary $\Wuniv \in \M(C_0^u(\QG) \otimes C_0(\hh\QG))$ characterised by the
following ``universal'' property: for a $C^*$-algebra $\blg$, there is a bijection between:
\begin{itemize}
\item unitary elements $U\in \M(\blg \otimes C_0(\hh\QG))$ with
$(\id\otimes\hh\Delta)(U) = U_{13} U_{12}$; and
\item non-degenerate $*$-homomorphisms $\phi_U:C_0^u(\QG)\rightarrow \M(\blg)$,
\end{itemize}
given by $(\phi_U\otimes\id)(\Wuniv) = U$. When the unitary $U$ is fixed, we will sometimes write $\phi$ rather than $\phi_U$.

Similarly, there is a unitary $\wW \in \M(C_0(\QG)\otimes C_0^u(\hh\QG))$,
universal in the sense that for every $C^*$-algebra $\blg$, there is a bijection between:
\begin{itemize}
\item unitary elements $U\in \M(C_0(\G)\otimes \blg)$ with
$(\Delta\otimes\id)(U) = U_{13} U_{23}$; and
\item non-degenerate $*$-homomorphisms $\phi_U:C_0^u(\hh\QG)\rightarrow \M(\blg)$.
\end{itemize}
The bijection is again given by a similar relation:
$(\id\otimes\phi_U)(\wW) = U$.
Note that in \cite{ku},  $\mc V$ and $\hh{\mc V}$ are used for $\Wuniv$ and $\wW$, respectively.

We can also consider  the multiplicative unitary of the dual quantum group $\hh \G$,  $\hh W \in \M(C_0(\QG)\otimes C_0(\hh\QG))$. It is  equal to $\sigma(W)^*$, where $\sigma:\M(C_0(\hh\QG)\otimes C_0(\G)) \rightarrow \M(C_0(\QG) \otimes C_0(\hh\QG))$ is the ``swap map''.

A \emph{Hopf $*$-homomorphism} $\pi:C_0^u(\G)\rightarrow MC_0(\H)$ is a non-degenerate $*$-homomorphism intertwining the coproducts.  In this case there is a unique Hopf $*$-homomorphism $\hh\pi:C_0^u(\hh\H) \rightarrow \M(C_0^u(\hh\G))$ ``dual'' to $\pi$ (see for example \cite{mrw}).  In addition to the unitaries $\Wuniv$ and $\wW$, there is also a truly universal bicharacter $\WW\in\M(C_0^u(\G)\otimes C_0^u(\hh\G))$, which satisfies the conditions $\Ww = (\id\otimes\Lambda_{\hh\G})(\WW)$ and
$\wW = (\Lambda_{\G}\otimes\id)(\WW)$.  Then $\hh\pi$ is uniquely
determined by the relation
\begin{equation}
(\pi\otimes\id)(\WW_{\G}) = (\Lambda_{\H}\otimes\hh\pi)(\WW_{\H})
=(\id\otimes\hh\pi)(\wW_{\H}).
\end{equation}
There are various notions of a ``closed quantum subgroup'' $\H$ of a locally compact quantum group $\G$ in the literature and these are analysed in detail in the recent article \cite{closedqs}.  The weakest of these is when $\pi$ maps surjectively onto $C_0(\H)$, in which case $\H$ is called a closed quantum subgroup of $\G$ \emph{in the sense of Woronowicz}.
If $\H$ is compact (in particular, finite) or $\QG$ is discrete (see Subsection 1.3 below) this notion is equivalent to the notion of closed quantum subgroup \emph{in the sense of Vaes}, i.e.\ to the existence of an injective normal $^*$-homomorphism $\theta:L^{\infty}(\widehat{\H})\to L^{\infty}(\widehat{\G})$, intertwining the respective comultiplications (see Section 6 of \cite{closedqs}). We then say simply that $\QH$ is a closed quantum subgroup of $\QG$.

\subsection{Unitary representations of locally compact quantum groups} \label{unitreplcqg}

Let $\QG$ be a locally compact quantum group.

\begin{deft} A unitary representation of $\QG$ (or a unitary corepresentation
of $C_0(\G)$) on a Hilbert space $\Hil$ is a unitary $U\in \M(C_0(\QG)\otimes
\mathcal \mc K(\Hil))$ with $(\Delta\otimes\id)U = U_{13} U_{23}$.  We will often write $\Hil_U$ for the Hilbert space upon which $U$ acts.
The trivial representation of $\QG$, i.e.\ $U=1 \ot 1 \in  \M(C_0(\QG)\otimes
\mathcal \bc)$, will be denoted simply by $1$.
\end{deft}

The \emph{tensor product} of two representations $U$ and $V$ is
$U \tp V = U_{12} V_{13}$, which acts on $\sH_U \otimes \sH_V$.  The direct sum of two representations is easy to understand, but a little harder to write down.  Formally, let $\iota_V:\sH_V\rightarrow
\sH_V\oplus\sH_U$ be the inclusion, and $p_V:\sH_V\oplus\sH_U \rightarrow \sH_V$
be the projection, and similarly for $\iota_U$ and $p_U$.  Then define
\begin{equation} U \oplus V = (1\otimes\iota_U) U (1\otimes p_U)
+ (1\otimes\iota_V) V (1\otimes p_V) \in
\M(C_0(\QG) \otimes \mc K(\sH_U\oplus\sH_V)).
\end{equation}
The last formula  may seem strange at first sight if one thinks of  multiplier
algebras, but has a natural interpretation in terms of adjointable operators.  Slightly more informally, we first make the identification
\begin{equation} \mc K(\sH_U\oplus\sH_V) = \begin{pmatrix} \mc K(\sH_U) & \mc K(\sH_V,\sH_U) \\
\mc K(\sH_U,\sH_V) & \mc K(\sH_V) \end{pmatrix}.
\end{equation}
Then it is easy to see how we view $\M(C_0(\QG)\otimes\mc K(\sH_U))$ as
a subalgebra of $\M(C_0(\QG)\otimes\mc K(\sH_U\oplus\sH_V))$, basically the ``upper
left corner'', and similarly for $\M(C_0(\QG)\otimes\mc K(\sH_V))$, the
``lower right corner''.

A representation of $\QG$ is called \emph{irreducible} if it is not (unitarily equivalent to) a direct sum of two non-zero representations.

\subsection{Compact/discrete quantum groups} \label{compdiscsubsect}

A locally compact quantum group $\QG$ is called \emph{compact} if the algebra $C_0(\G)$ is unital (we then denote it simply by $C(\G)$), or, equivalently, the Haar weight is in fact a bi-invariant state. It is said to be \emph{discrete} if $C_0(\G)$ is a direct sum of matrix algebras (and is then denoted $c_0(\G)$), or, equivalently, $\widehat{\QG}$ is compact. For a compact quantum group $\QG$  the symbol $\Irr_{\QG}$ will denote the collection of equivalence classes of finite-dimensional unitary representations of $\QG$ (note that our assumptions imply it is a countable set). We will always assume that for each $\alpha\in \Irr_{\QG}$ a particular representative has been chosen and moreover identified with a unitary matrix $U^{\alpha}= (u_{ij}^{\alpha})_{i,j=1}^{n_{\alpha}} \in \mathbb M_{n_{\alpha}} (C(\QG))$. The span of all the \emph{coefficients} $u_{ij}^{\alpha}$ is a dense (Hopf) *-subalgebra of $C(\QG)$, denoted $\Pol(\QG)$. The algebra of  functions vanishing at infinity on the dual discrete quantum group is given by the equality $c_0(\widehat{\QG})= \bigoplus_{\alpha \in \Irr _\QG} \mathbb M_{n_\alpha}$. Thus the elements affiliated to $c_0(\widehat{\QG})$ can be identified with functionals on $\Pol(\QG)$. Note that the Haar state of $\QG$ is faithful on $\Pol(\QG)$; moreover in fact $C^u(\G)$ is the enveloping $C^*$-algebra of $\Pol(\G)$, and thus we can also view the latter algebra as a subalgebra of $C^u(\QG)$. The (semi-)universal multiplicative unitary of $\G$ is then given by the formula
\begin{equation}\label{unitcompact} \Wuniv = \sum_{\alpha \in \Irr_\QG}  u_{ij}^\alpha \ot e_{ij}^{\alpha} \in
\prod_{\alpha \in \Irr_\G} C^u(\G) \otimes \mathbb M_{n_\alpha} = \M(C^u(\G) \ot c_0(\widehat{\QG})). \end{equation}

By a \emph{state} on $\Pol(\G)$ we mean a linear functional $\mu:\Pol(\G)\rightarrow\mathbb C$ which is positive in the sense that $\mu(a^*a)\geq0$ for all $a\in\Pol(\G)$.  We can follow the usual GNS construction to obtain a pre-Hilbert space $\Hil_0$, a cyclic vector $\xi_0\in \Hil_0$ and a $*$-homomorphism $\Pol(\G)\rightarrow \mc L(\Hil_0)$, the collection of adjointable maps on $\Hil_0$, with $\mu(a) = (\pi(a)\xi_0|\xi_0)$.  As argued in \cite[Lemma~4.2]{DK} (compare \cite[Lemma~8.7]{QSCC2}) for the algebra $\Pol(\G)$, the map $\pi$ always extends to a $*$-homomorphism $\Pol(\G)\rightarrow\mc B(\Hil)$, where $\Hil$ is the completion of $\Hil_0$.  As $C^u(\G)$ is the enveloping $C^*$-algebra of $\Pol(\G)$, we see that there is a bijection between states on $C^u(\G)$ and states on $\Pol(\G)$. To simplify the notation we will occasionally write simply $\Cou$ to denote the counit of $\G$ understood as a character on $\Pol(\G)$.

\subsection{Multipliers of quantum groups}\label{sec:mults}

The notion of (completely bounded) multipliers on locally compact quantum groups plays a crucial role in the paper. Here we review the relevant notions and prove a technical result for later use.

\begin{deft} A \emph{completely bounded  left multiplier} of $L^1(\hh\G)$ is a bounded linear map
$L_*: L^1(\hh\G)\rightarrow L^1(\hh\G)$ such that $L_*(\omega_1*\omega_2)
= L_*(\omega_1) * \omega_2$ for all $\omega_1,\omega_2 \in L^1(\hh \G)$, and whose adjoint $L=(L_*)^*$ is a completely
bounded map on $L^\infty(\hh\G)$.
\end{deft}

The adjoints of completely bounded left multipliers can be characterised as follows.

\begin{prop}\label{prop:mult_char}
Let $L:L^\infty(\hh\G)\rightarrow L^\infty(\hh\G)$ be a normal, completely
bounded map.  Then the following are equivalent:
\begin{enumerate}
\item\label{prop:mult_char:one}
$L$ is the adjoint of a left multiplier of $L^1(\hh\G)$;
\item\label{prop:mult_char:two}
$\hh\Delta\circ L = (L\otimes\id)\circ \hh\Delta$;
\item\label{prop:mult_char:three}
there is $a\in L^\infty(\G)$ with
$(L\otimes\id)(\hh W) = (1\otimes a)\hh W$.
\end{enumerate}
If these hold, then actually $a\in \M (C_0(\G))$, and we have that $a\widehat\lambda(\omega) = \widehat\lambda(L_*(\omega))$ for $\omega\in L^1(\hh\G)$, where $\hh\lambda:L^1(\hh\G)\rightarrow C_0(\G)$ is defined by $\hh\lambda(\omega)=(\omega\otimes\id)(\hh W)$ for $\omega\in L^1(\hh\G)$.  In this way, the multiplier $L_*$ is given by left multiplication by the element $a$ in the left regular representation.
\end{prop}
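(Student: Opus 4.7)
The proof naturally splits into the duality equivalence (1)$\Leftrightarrow$(2), the key step (2)$\Leftrightarrow$(3), and the supplementary claims about $a\in\M(C_0(\G))$ and the formula $a\hh\lambda(\omega)=\hh\lambda(L_*(\omega))$.

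For (1)$\Leftrightarrow$(2), I would simply unfold the multiplier condition. Writing $L_*(\omega_1\ast\omega_2)=L_*(\omega_1)\ast\omega_2$ as an identity of normal functionals on $L^\infty(\hh\G)$ and using the defining relation $(\eta_1\ast\eta_2)(y)=(\eta_1\otimes\eta_2)(\hh\Delta(y))$, this translates to
\[
(\omega_1\otimes\omega_2)\bigl(\hh\Delta(L(x))-(L\otimes\id)\hh\Delta(x)\bigr)=0
\]
for all $\omega_1,\omega_2\in L^1(\hh\G)$ and $x\in L^\infty(\hh\G)$. Since normal product functionals separate points of $L^\infty(\hh\G)\vnten L^\infty(\hh\G)$, this is precisely (2).

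The main work is (2)$\Rightarrow$(3). I would set $Z:=(L\otimes\id)(\hh W)\,\hh W^{\ast}\in L^\infty(\hh\G)\vnten L^\infty(\G)$ and, using the coproduct identity $(\hh\Delta\otimes\id)(\hh W)=\hh W_{13}\hh W_{23}$ together with (2), compute
\[
(\hh\Delta\otimes\id)(Z)=\bigl[(L\otimes\id)(\hh W)\bigr]_{13}\,\hh W_{23}\,\hh W_{23}^{\ast}\,\hh W_{13}^{\ast}=Z_{13}.
\]
Slicing the right leg of $Z$ by an arbitrary $\omega\in L^1(\G)$ therefore yields an element $y:=(\id\otimes\omega)(Z)\in L^\infty(\hh\G)$ satisfying $\hh\Delta(y)=y\otimes 1$; since the subspace of such right-coinvariants in $L^\infty(\hh\G)$ is $\mathbb{C}1$, each such slice is scalar. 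This provides a bounded linear functional on $L^1(\G)$, so $Z=1\otimes a$ for a unique $a\in L^\infty(\G)$, giving (3). For (3)$\Rightarrow$(2) I would verify the intertwining first on the weak$^{*}$-dense family $\{(\id\otimes\omega)(\hh W):\omega\in L^1(\G)\}\subseteq L^\infty(\hh\G)$, where both sides of (2) reduce, via the same coproduct identity, to the same three-leg expression, and then extend to all of $L^\infty(\hh\G)$ by normality of $L$ and $\hh\Delta$.

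For the supplementary claims, slicing (3) with $\omega\in L^1(\hh\G)$ in the first leg produces $\hh\lambda(L_*(\omega))=a\hh\lambda(\omega)$ directly. Since $\hh\lambda(L^1(\hh\G))$ is norm-dense in $C_0(\G)$, this forces $aC_0(\G)\subseteq C_0(\G)$; applying the unitary antipode $R$ to the relation in (3) and using $R(C_0(\G))=C_0(\G)$ produces the symmetric statement $C_0(\G)a\subseteq C_0(\G)$, so $a\in \M(C_0(\G))$. The main obstacle is expected to be the invariance argument in (2)$\Rightarrow$(3): one must correctly handle the three-leg manipulation of $(L\otimes\id)(\hh W)\hh W^{\ast}$ via the coproduct identity and then cleanly package the slice-by-slice scalar conclusion into the tensor identity $Z=1\otimes a$.
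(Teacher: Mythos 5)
Your treatment of the three equivalences is essentially correct and is in fact more self-contained than the paper's, which disposes of (1)$\Leftrightarrow$(2) as an easy check, refers (3)$\Rightarrow$(1) to a ``simple calculation'' in \cite{daws3}, and defers (2)$\Rightarrow$(3) to \cite[Theorem~4.10]{jnr} or \cite[Proposition~3.1]{daws}. Your computation $(\hh\Delta\otimes\id)(Z)=Z_{13}$ for $Z=(L\otimes\id)(\hh W)\hh W^*$ is valid: the step $(L\otimes\id\otimes\id)(\hh W_{13}\hh W_{23})=[(L\otimes\id)(\hh W)]_{13}\hh W_{23}$ is legitimate because amplifications of normal completely bounded maps are bimodule maps over the legs they do not touch, and the triviality of $\{y\in L^\infty(\hh\G):\hh\Delta(y)=y\otimes 1\}$ is a standard consequence of invariance of the Haar weights (it should be cited, e.g.\ from \cite{KV}). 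The slice argument then correctly packages $Z$ as $1\otimes a$, and the converse direction and the identity $a\hh\lambda(\omega)=\hh\lambda(L_*(\omega))$ follow as you describe.

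The genuine gap is the final claim that $a\in\M(C_0(\G))$. Slicing gives $aC_0(\G)\subseteq C_0(\G)$, so $a$ is a \emph{left} centraliser; for a non-unital $C^*$-algebra this does not put $a$ in the multiplier algebra, for which one also needs $C_0(\G)a\subseteq C_0(\G)$. Your antipode trick does not supply this. Applying $\hh R\otimes R$ to $(L\otimes\id)(\hh W)=(1\otimes a)\hh W$, and using $(\hh R\otimes R)(\hh W)=\hh W$ together with the anti-multiplicativity of $\hh R\otimes R$, yields $\big((\hh R L\hh R)\otimes\id\big)(\hh W)=\hh W(1\otimes R(a))$, whose slices give $C_0(\G)R(a)\subseteq C_0(\G)$ --- a statement about $R(a)$, not about $a$. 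Indeed, applying $R$ to $C_0(\G)R(a)\subseteq C_0(\G)$ returns exactly $aC_0(\G)\subseteq C_0(\G)$: transporting the relation by an anti-automorphism and reading it back yields no new information. What is actually needed is $C_0(\G)a\subseteq C_0(\G)$, equivalently $a^*C_0(\G)\subseteq C_0(\G)$, and $a^*$ is not visibly the representing element of any left multiplier manufactured from $L$ (the map $x\mapsto L(x^*)^*$ is again a left multiplier, but its representing element is not expressible in terms of $a$, since $L$ is not multiplicative). This two-sidedness is precisely the non-trivial content that the paper outsources to \cite[Theorem~4.10]{jnr} and \cite[Proposition~3.1]{daws}, and it is the one point you must either prove (e.g.\ via the extension of $L$ to a bimodule map on $\mc B(L^2(\G))$, or the Hilbert $C^*$-module argument of \cite{daws}) or cite.
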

\begin{proof}
Conditions (\ref{prop:mult_char:one}) and (\ref{prop:mult_char:two}) are
easily seen to be equivalent, and (\ref{prop:mult_char:three}) implies
(\ref{prop:mult_char:one}) is a simple calculation, compare
\cite[Proposition~2.3]{daws3}.  For (\ref{prop:mult_char:two}) implies (\ref{prop:mult_char:three}) see
\cite[Theorem~4.10]{jnr}, or \cite[Proposition~3.1]{daws} for a quicker proof
which also establishes that $a\in \M(C_0(\G))$.
\end{proof}

\begin{rem}\label{rem:mult_leaves_co_inv}
Condition (\ref{prop:mult_char:three}) actually implies that $L$ restricts to a map on $C_0(\hh\QG)$.  Indeed, for $\omega\in\mc B(L^2(\QG))_*$, we see that $L\big( (\id\otimes\omega)(\widehat W) \big) = (\id\otimes\omega a) (\widehat W)$.  As $C_0(\hh\QG)$ is the closure of $\{ (\id\otimes\omega)(\widehat W) : \omega\in\mc B(L^2(\QG))_* \}$ the result follows.
\end{rem}

As explained in \cite{daws}, there is a standard way to use a representation
$U\in \M(C_0(\G)\otimes\mc K(\sH))$ of $\G$ and a bounded functional on $\mc K(\sH)$ to induce a completely bounded left multiplier $L$ of $\hh\G$.  For $\omega\in\mc K(\sH)^*$, the map defined by
\begin{equation} L(x) = (\id\otimes\omega)\big( U(x\otimes 1)U^* \big),
\qquad (x\in L^\infty(\hh\G)) \label{LfromU}\end{equation}
is a normal completely bounded map (completely positive if $\omega$
is positive) whose pre-adjoint is a left multiplier.
The associated ``representing'' element $a\in \M (C_0(\G))$ is given by
$a=(\id\otimes\omega)(U^*)$.  Recall that $b=(\id\otimes\omega)(U)\in \overline{{\mc D}}_S$ and satisfies $S(b)=a$.
If $\omega$ is self-adjoint, then $b^*=(\id\otimes\omega)(U^*) = a$.

Since every representation $U$ is of the form $U=(\id\otimes\phi_U)(\wW)$ for some non-degenerate $*$-homomorphism $\phi_U:C_0^u(\hh\G)\rightarrow\mc B(H)$, we can write $a=(\id\otimes\mu)(\wW^*) = (\mu^*\otimes\id)(\hh\Wuniv)^*$, where $\mu = \omega\circ\phi_U \in C_0^u(\hh\G)^*$.  In this way,
$L$ is of the form
\begin{equation}
L(x) = (\id\otimes\mu)(\wW(x\otimes 1)\wW^*)
= (\mu\otimes\id)(\hh\Wuniv^*(1\otimes x)\hh\Wuniv).
\end{equation}
The converse holds when $L$ is completely positive:

\begin{tw}[{\cite[Theorem 5.1]{daws}}]
Let $L:L^\infty(\hh\G)\rightarrow L^\infty(\hh\G)$ be a completely positive
map which is the adjoint of a left multiplier of $L^1(\hh\G)$.  Then
$L(x) = (\mu\otimes\id)(\hh\Wuniv^*(1\otimes x)\hh\Wuniv)$
for some positive $\mu\in C_0^u(\hh\G)^*$.
\end{tw}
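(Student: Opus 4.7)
My plan is to reverse the construction given just before the theorem statement: dilate the CP multiplier $L$ via a Stinespring construction to produce a unitary representation of $\G$, then invoke the universal property of $\wW$ to extract the desired positive functional $\mu$ on $C_0^u(\hh\G)$.

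First, Proposition \ref{prop:mult_char} and Remark \ref{rem:mult_leaves_co_inv} show that $L$ restricts to a normal CP map on $C_0(\hh\G)$, with representing element $a\in\M(C_0(\G))$ satisfying $(L\ot\id)(\hh W)=(1\ot a)\hh W$. Applying the Stinespring/KSGNS construction then yields a Hilbert space $\sK$, a non-degenerate normal $*$-representation $\pi:L^\infty(\hh\G)\to\mc B(\sK)$, and a bounded operator $V:L^2(\G)\to\sK$ such that $L(x)=V^*\pi(x)V$ for every $x$, the dilation being chosen minimal so that $\pi(L^\infty(\hh\G))VL^2(\G)$ is total in $\sK$.

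Second, and this is the main technical step, I use the multiplier identity $\hh\Delta\circ L=(L\ot\id)\circ\hh\Delta$ to promote the Stinespring triple to a unitary representation: I construct $U\in\M(C_0(\G)\ot\mc K(\sK))$ satisfying $(\Delta\ot\id)(U)=U_{13}U_{23}$. The idea is to define $U$ so that it intertwines $V$ with the coaction data encoded by $\hh W$ on $L^2(\G)$; consistency on the cyclic set $\pi(L^\infty(\hh\G))VL^2(\G)$ follows from the multiplier identity, and the coproduct identity for $U$ comes from the pentagonal equation for $\hh W$. Complete positivity is used essentially here, as it ensures that the candidate operator is isometric and hence, by minimality, unitary (rather than merely a bounded intertwiner as in the general cb-multiplier case).

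Third, once $U$ is in hand, the universal property of $\wW$ furnishes a non-degenerate $*$-homomorphism $\phi_U:C_0^u(\hh\G)\to\mc B(\sK)$ with $U=(\id\ot\phi_U)(\wW)$, and $\mu:=\omega_{V\Omega,V\Omega}\circ\phi_U$, for a suitable vector $\Omega\in L^2(\G)$, is the required positive functional. Substituting $U=(\id\ot\phi_U)(\wW)$ into the Stinespring identity $L(x)=(\id\ot\omega_{V\Omega,V\Omega})(U(x\ot 1)U^*)$ and converting between $\wW$ and $\hh\Wuniv$ as recalled in the preliminaries yields the announced formula. The principal obstacle is the second step; once the unitary representation is produced, the remainder is an unwrapping of the universal properties recalled in Section~\ref{not}.
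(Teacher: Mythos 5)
First, a point of order: the paper does not prove this statement at all --- it is imported verbatim as \cite[Theorem 5.1]{daws}, so there is no internal proof to compare against. Daws' actual argument, moreover, does not run through a Stinespring dilation; it goes via his Hilbert $C^*$-module description of completely bounded left multipliers (building on \cite{jnr} and \cite{daws3}) and then shows that in the completely positive case the representing data can be organised into a corepresentation and a positive functional on $C_0^u(\hh\G)$.

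As for your sketch: the overall strategy (reverse the construction preceding the theorem) is the natural one, but the proof has two genuine gaps. The first is that your ``main technical step'' is precisely where the entire difficulty of the theorem lives, and you do not carry it out: no formula for $U$ is given, and none of well-definedness on the cyclic subspace, the corepresentation identity $(\Delta\ot\id)(U)=U_{13}U_{23}$, or membership in $\M(C_0(\G)\ot\mc K(\sK))$ is verified. In particular, the assertion that minimality upgrades an isometric candidate to a unitary is not justified: minimality gives at best a well-defined isometry, and surjectivity of an isometric corepresentation requires a separate argument (typically via the unitary antipode or an explicit construction of the adjoint); this is not a formality in the locally compact quantum group setting. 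The second gap is in the final step. The Stinespring identity reads $L(x)=V^*\pi(x)V$ for \emph{all} $x$, with $V:L^2(\G)\to\sK$, whereas the target formula is $L(x)=(\id\ot\omega)\bigl(U(x\ot 1_{\sK})U^*\bigr)$ in which $x$ acts on the \emph{first} leg. Setting $\mu=\omega_{V\Omega,V\Omega}\circ\phi_U$ for a single vector $\Omega\in L^2(\G)$ cannot reproduce $V^*\pi(x)V$: the positive functional must encode the whole of $V$. Concretely (already in the classical case), after identifying $\pi$ with an amplification $\sK\cong L^2(\G)\ot\sH$, $\pi(x)=x\ot 1_{\sH}$, one needs to exhibit $V$ in the form $V=U^*(1\ot\xi_0)$ for a corepresentation $U$ on the multiplicity space $\sH$ and a fixed $\xi_0\in\sH$, and then $\mu=\omega_{\xi_0,\xi_0}\circ\phi_U$; your $\omega_{V\Omega,V\Omega}$ lives on the wrong space and depends on an arbitrary choice of $\Omega$ that the identity cannot tolerate. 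Both gaps are fixable in principle, but as written the argument does not establish the theorem.
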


In the completely positive case, one might call the resulting representing elements
$a$ ``completely positive definite''; a somewhat intrinsic characterisation
of such $a$ is given in \cite{ds}.

We will now show that completely positive multipliers induce bounded maps on the Hilbert space $L^2(\QG)$, which have a natural interpretation in terms of $U$ and $\omega$. This result, or rather its special case recorded in Proposition \ref{states-multipliers}, will be of use in the proof of Theorem \ref{groupalg}. As we believe it is of independent interest, we present the general statement.

Let us first recall certain facts related to weights.  For a proper weight $\gamma$ on a $C^*$-algebra $\alg$ we put $\mf n_{\gamma} = \{ x\in \alg :
\gamma(x^*x)<\infty\}$.  Then we have the GNS construction $(\Hil,\pi,\eta)$
where $\Hil$ is the completion of $\mf n_{\gamma}$ for the pre-inner-product
$(\eta(x)|\eta(y)) = \gamma(y^*x)$ for $x,y\in\mf n_{\gamma}$.
Let $L:\alg\rightarrow \alg$ be a positive linear map which satisfies the
Schwarz inequality $L(x)^*L(x) \leq L(x^*x)$, as would be true
for any completely positive $L$.
Assume furthermore that $\gamma(L(x)) \leq \gamma(x)$ for all $x\in \alg^+$
(under the usual convention that $t\leq\infty$ for all $0\leq t\leq\infty$).  Then there is a contractive linear map $T:\Hil\rightarrow \Hil$ which satisfies $T\eta(x) = \eta(L(x))$ for all $x\in\mf n_{\gamma}$.

\begin{tw}\label{states-multipliers-new}
Let $\G$ be a locally compact quantum group, let $U$ be a representation of
$\G$ on $\Hil$, let $\omega\in\mc K(\Hil)^*$ be a state, and form $L$ as in
\eqref{LfromU}.  Then $L$ is unital, and satisfies $\hh\varphi\circ L
\leq \hh\varphi$. Thus $L$ induces a contractive operator  $T:L^2(\hh\G) \rightarrow L^2(\hh\G)$.
Identifying $L^2(\hh\G)$ with $L^2(\G)$, the operator $T$ is equal to
$(\id\otimes\omega)(U)$.
\end{tw}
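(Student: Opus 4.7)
The plan is to prove the four claims in sequence---unitality of $L$, the weight inequality $\hh\varphi\circ L\leq\hh\varphi$, the existence of the contraction $T$ (which is automatic from the general lemma recalled just before the statement), and finally the identification $T=(\id\ot\omega)(U)$. The last step is the main obstacle.

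Unitality is immediate: since $U$ is unitary and $\omega(1)=1$, one has $L(1)=(\id\ot\omega)(UU^{*})=1$. For the weight inequality, I would invoke Proposition \ref{prop:mult_char} to obtain the intertwining $\hh\Delta\circ L=(L\ot\id)\circ\hh\Delta$, and then, for $x\in\mf m_{\hh\varphi}^{+}$, compute
\begin{equation*}
\hh\varphi(L(x))\cdot 1
=(\id\ot\hh\varphi)\hh\Delta(L(x))
=(\id\ot\hh\varphi)(L\ot\id)\hh\Delta(x)
=L\bigl((\id\ot\hh\varphi)\hh\Delta(x)\bigr)
=L(\hh\varphi(x)\cdot 1)
=\hh\varphi(x)\cdot 1,
\end{equation*}
where normality of $L$ is used to commute it with the operator-valued slice, together with left invariance of $\hh\varphi$ and the unitality already established. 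The inequality is trivial when $\hh\varphi(x)=\infty$, so $\hh\varphi\circ L\leq\hh\varphi$ on $L^{\infty}(\hh\G)^{+}$. Complete positivity of $L$ supplies the Schwarz inequality, and the general lemma then yields the contraction $T:L^{2}(\hh\G)\to L^{2}(\hh\G)$ with $T\hh\eta(x)=\hh\eta(L(x))$ for $x\in\mf n_{\hh\varphi}$.

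The core step is identifying $T$ with $b:=(\id\ot\omega)(U)\in L^{\infty}(\G)$. Since $\omega$ is a state and $U$ is unitary we have $\|b\|\leq 1$, so $b$ acts as a contraction on $L^{2}(\G)\cong L^{2}(\hh\G)$ by left multiplication. I would prove $b\,\hh\eta(x)=\hh\eta(L(x))$ for $x\in\mf n_{\hh\varphi}$ by taking inner products with $\hh\eta(y)$, $y\in\mf n_{\hh\varphi}$. The left hand side rewrites as $\omega\bigl((\omega_{\hh\eta(x),\hh\eta(y)}\ot\id)(U)\bigr)$ by the definition of the slice, while the right hand side, using Fubini for the pair $(\hh\varphi,\omega)$, becomes $(\hh\varphi\ot\omega)\bigl((y^{*}\ot 1)U(x\ot 1)U^{*}\bigr)$. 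Their equality is implied by the operator identity in $\mc B(\Hil)$
\begin{equation*}
(\omega_{\hh\eta(x),\hh\eta(y)}\ot\id)(U)
=(\hh\varphi\ot\id)\bigl((y^{*}\ot 1)U(x\ot 1)U^{*}\bigr).
\end{equation*}

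The verification of this operator identity is the main obstacle. For the special case $U=W$ acting on $\Hil=L^{2}(\G)$, the relation $W(x\ot 1)W^{*}=\hh\Delta(x)$ collapses the right hand side to $(\hh\varphi\ot\id)((y^{*}\ot 1)\hh\Delta(x))$, and the required identity $(\omega_{\hh\eta(x),\hh\eta(y)}\ot\id)(W)=(\hh\varphi\ot\id)((y^{*}\ot 1)\hh\Delta(x))$ is a Plancherel-style formula standard in the Kustermans--Vaes framework, provable from the definition of $\hh\eta$ and left invariance. For a general representation I would pass to the semi-universal picture, writing $U=(\id\ot\phi_{U})(\wW)$ for a non-degenerate $*$-homomorphism $\phi_{U}:C_{0}^{u}(\hh\G)\to\mc B(\Hil)$. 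Both sides of the target identity are then the $\phi_{U}$-images of the corresponding expressions built from $\wW$, since $\phi_{U}$ acts only on the second leg on which neither $\hh\varphi$ nor $\omega_{\hh\eta(x),\hh\eta(y)}$ acts; this transports the identity from $\wW$ to $U$, and the case of $\wW$ is in turn obtained from the $W$-case via the reducing morphism $\Lambda_{\hh\G}$.
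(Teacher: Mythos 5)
Your treatment of unitality and of the inequality $\hh\varphi\circ L\leq\hh\varphi$ is essentially sound, though it takes a different route from the paper: rather than slicing $\hh\Delta(L(x))$ by the weight and invoking left invariance directly (which needs a little care, since $(\id\ot\hh\varphi)$ is an operator-valued weight --- the clean version is to pair with a normal state $\theta$ and apply left invariance to the positive normal functional $\theta\circ L$, which has $\theta(L(1))=1$), the paper lifts everything to the universal level, writing $L_*$ as multiplication by a state $\mu\in C_0^u(\hh\G)^*$ and using left invariance of $\hh\varphi_u=\hh\varphi\circ\hh\Lambda$ on $C_0^u(\hh\G)$. Either way one in fact obtains equality of $\hh\varphi\circ L$ and $\hh\varphi$ on positive elements of finite weight, so this part is fine.

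The gap is in the final identification. Your target operator identity is correct --- it is equivalent to the statement of the theorem as $\omega$ ranges over normal states --- but the proposed proof of it does not close. Transporting the identity from $\wW$ to $U$ via $\phi_U$ is reasonable (modulo justifying the slice of the weight against a leg of a multiplier), but the last reduction, namely that ``the case of $\wW$ is obtained from the $W$-case via the reducing morphism $\Lambda_{\hh\G}$'', goes in the wrong direction: since $(\id\ot\Lambda_{\hh\G})(\wW)=W$, applying $\Lambda_{\hh\G}$ to the second leg of the $\wW$-identity \emph{yields} the $W$-identity, and $\Lambda_{\hh\G}$ is not injective unless $\hh\G$ is coamenable, so the converse implication fails. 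Equivalently, after pairing with $\theta\in\mc B(\Hil)_*$, the $W$-case only establishes the identity for the functionals $\theta\circ\phi_U$ that factor through $C_0(\hh\G)$; but the representations relevant to the Haagerup property (those weakly containing the trivial representation) are exactly the ones that typically do not factor through the reduced algebra. The paper circumvents this entirely by invoking the dual weight construction: on the core $\{\xi(\gamma)=\hh\eta(\lambda(\gamma)):\gamma\in\mc I\}$ of $L^2(\G)$ one has the purely algebraic identities $L(\lambda(\gamma))=\lambda(a^*\gamma)$ (from the multiplier property of $L$, with $a=(\id\ot\omega)(U^*)$) and $\xi(a^*\gamma)=a^*\xi(\gamma)$, which give $T\xi(\gamma)=a^*\xi(\gamma)=(\id\ot\omega)(U)\xi(\gamma)$ for an arbitrary representation $U$, with no reduction to $W$ required. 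To repair your argument you would either have to prove the Plancherel-type identity directly at the level of $\wW$, or restrict to the dense subspace $\xi(\mc I)$ as the paper does.
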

\begin{proof}
Unitality of $L$ follows directly from \eqref{LfromU} and the fact that $\omega$ is a state.  Recall the reducing morphism $\hh\Lambda:C_0^u(\hh\G) \rightarrow C_0(\hh\G)$ be the reducing morphism.  Then the composition of $L^1(\hh\G) \rightarrow C_0(\hh\G)^*$ with $\hh\Lambda^*:C_0(\hh\G)^* \rightarrow C_0^u(\hh\G)^*$ identifies $L^1(\hh\G)$ with a two-sided ideal in $C_0^u(\hh\G)^*$, see \cite[Proposition~8.3]{daws2}.  We follow \cite[Section~4.1]{daws} to see that under this identification there is a state $\mu\in C_0^u(\hh\G)^*$ such that $\hh\Lambda^*(L_*(\omega)) = \mu\hh\Lambda^*(\omega)$ for all $\omega\in L^1(\hh\G)$.  Define $L_\mu:C_0^u(\hh\G)\rightarrow C_0^u(\hh\G); x\mapsto (\mu\otimes\id)\hh\Delta_u(x)$, so that $\hh\Lambda^* \circ L_* = L_\mu^*\circ \hh\Lambda^*$.

We now follow \cite[Section~8]{ku}, and define $\hh\varphi_u = \hh\varphi\circ \hh\Lambda$, which is a proper, left-invariant weight on $C_0^u(\hh\G)$.  Then
\cite[Proposition~8.4]{ku} shows that for $\omega\in C_0^u(\hh\G)^*_+$ and $x\in C_0^u(\hh\G)^+$ with $\hh\varphi_u(x)<\infty$ we have that $\hh\varphi_u( (\omega\otimes\id)\hh\Delta_u(x) ) = \ip{\omega}{1}\hh\varphi_u(x)$.  It follows that $\hh\varphi_u(L_\mu(x)) = \hh\varphi_u(x)$ for such $x$.  So for $y\in C_0(\G)^+$ with $\hh\varphi(y)<\infty$, there is $x\in C_0^u(\G)^+$ with $\Lambda(x)=y$, so $\hh\varphi_u(x)<\infty$, hence $\hh\varphi(y) = \hh\varphi_u(x) = \hh\varphi_u(L_\mu(x)) = \hh\varphi(L(y))$.  In particular, $\hh\varphi\circ L \leq \hh\varphi$.

If we denote the GNS inclusion for the dual weight by $\hh \eta:\mf n_{\hh\varphi} \to L^2(\G)$, there is hence a contraction $T\in\mc B(L^2(\hh\G))$ with $T \hh\eta(x) = \hh\eta\big( L(x) \big)$ for $x\in\mf n_{\hh\varphi}$. Let us now recall the dual weight construction (see \cite[Section~1.1]{kvvn} or
\cite[Section~8]{KV}).  We define
\begin{equation} \mc I = \{ \gamma\in L^1(\G) : \exists\, \xi(\gamma)\in L^2(\G),
(\xi(\gamma)|\eta(a)) = \ip{a^*}{\gamma} \ (a\in\mf n_\varphi) \}, \end{equation}
where this time $\eta:\mf n_{\varphi} \to L^2(\G)$ is the GNS inclusion for $\varphi$.
If we let $\lambda:L^1(\G) \rightarrow L^\infty(\hh\G); \gamma \mapsto
(\gamma\otimes\id)(W)$ be the left-regular representation, then
$\widehat\eta(\lambda(\gamma)) = \xi(\gamma)$, under the identification of
$L^2(\hh\G)$ with $L^2(\G)$.  Moreover, for $x\in MC_0(\G)$ and $\gamma\in L^1(\G)$ we have that $\xi(x\gamma) = x \xi(\gamma)$.

As in the lines after \eqref{LfromU}, set $a = (\id\otimes\omega)(U^*)$.  Arguing as in Remark~\ref{rem:mult_leaves_co_inv}, we find that $L(\lambda(\gamma)) = \lambda(a^*\gamma)$ for $\gamma\in\mc I$.  Thus
\begin{equation} T \xi(\gamma) = T\widehat\eta(\lambda(\gamma))
= \widehat\eta(L(\lambda(\gamma)) = \widehat\eta(\lambda(a^*\gamma))
= \xi(a^*\gamma) = a^* \xi(\gamma). \end{equation}
However, $a^*=(\id\otimes\omega)(U^*)^* = (\id\otimes\omega)(U)$
as $\omega$ is positive, which completes the proof.
\end{proof}

\begin{rem}
Working a bit harder, and following \cite[Section~1.1]{KV} for how the weight on $C_0(\G)$ extends to $L^\infty(\G)$, one can show that $\varphi\circ L \leq \varphi$ on $L^\infty(\G)^+$ as well.
\end{rem}

If $\QG$ is discrete, we can actually show that the left multiplier $L$ preserves the Haar state of $\widehat{\QG}$. We formulate this below, and leave the easy proof of the state preservation to the reader.

\begin{prop}\label{states-multipliers}
Let $\G$ be a discrete quantum group, and denote the Haar state on $\hh \QG$ by  $\hh\varphi$.
Let $U$ be a representation of $\G$ on $\Hil$, let $\omega\in\mc K(\sH)^*$ be a state,
and form $L$ as in \eqref{LfromU}.  Then $L$ is unital, leaves $\hh\varphi$ invariant,
and so induces $T:L^2(\hh\G) \rightarrow L^2(\hh\G)$.  Identifying
$L^2(\hh\G)$ with $L^2(\G)$, the operator $T$ is equal to
$(\id\otimes\omega)(U)\in \M (c_0(\G))$.
\end{prop}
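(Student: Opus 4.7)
The proposition is essentially a specialisation of Theorem~\ref{states-multipliers-new} to the discrete setting, with one genuine addition: upgrading the bound $\hh\varphi\circ L \leq \hh\varphi$ to the exact invariance $\hh\varphi\circ L = \hh\varphi$. Accordingly, I would begin by observing that unitality of $L$ is immediate from \eqref{LfromU}: applying $\id\otimes\omega$ to the identity $U(1\otimes 1)U^* = 1\otimes 1$ yields $L(1)=\omega(1)\cdot 1 = 1$, since $\omega$ is a state. The existence of the contractive induced operator $T$ on $L^2(\hh\G)$ and its identification with $(\id\otimes\omega)(U)$ under the standard identification $L^2(\hh\G)\cong L^2(\G)$ then quote Theorem~\ref{states-multipliers-new} verbatim.

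The main new point is the state-preservation. Because $\G$ is discrete, $\hh\G$ is compact, so $C_0(\hh\G) = C(\hh\G)$ is unital and $\hh\varphi$ is a genuine (bi-invariant) state with $\hh\varphi(1) = 1$. From Theorem~\ref{states-multipliers-new} we already have $\hh\varphi(L(x)) \leq \hh\varphi(x)$ for every $x\in C(\hh\G)^+$. Given such $x$ with $\|x\|\le 1$, the element $1-x$ is positive, so applying the same inequality to $1-x$ gives
\begin{equation*}
\hh\varphi(1) - \hh\varphi(L(x)) \;=\; \hh\varphi(L(1-x)) \;\leq\; \hh\varphi(1-x) \;=\; \hh\varphi(1) - \hh\varphi(x),
\end{equation*}
using unitality of $L$. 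Rearranging yields $\hh\varphi(L(x)) \geq \hh\varphi(x)$, and combined with the reverse inequality we obtain $\hh\varphi\circ L = \hh\varphi$ on positive elements, hence on all of $C(\hh\G)$ by linearity. As promised, this ``easy proof of the state preservation'' is the only substantive step.

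Finally, the assertion that $(\id\otimes\omega)(U) \in \M(c_0(\G))$ is a standard slice-map observation: since $U \in \M(c_0(\G)\otimes\mc K(\sH))$ and $\omega\in\mc K(\sH)^*$, the slice $(\id\otimes\omega)(U)$ lies in $\M(c_0(\G))$. There is no real obstacle anywhere in the argument — the invariance upgrade is the only step requiring comment, and it is a one-line consequence of unitality together with $\hh\varphi(1)=1$, which is precisely what compactness of $\hh\G$ affords us over the general weight-theoretic setting of Theorem~\ref{states-multipliers-new}.
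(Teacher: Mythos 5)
Your proposal is correct, and it follows the route the paper intends: everything except the exact invariance is quoted from Theorem~\ref{states-multipliers-new}, and the paper explicitly leaves the ``easy proof of the state preservation to the reader'', which is precisely the one step you supply. Your complementation trick --- applying $\hh\varphi\circ L\leq\hh\varphi$ to $1-x$ for $0\leq x\leq 1$ and using $L(1)=1$ together with $\hh\varphi(1)=1$ --- is a clean, self-contained way to get the reverse inequality, and it has the merit of using only the \emph{stated} conclusion of Theorem~\ref{states-multipliers-new}. The alternative (and probably the authors' intended) observation is that the proof of Theorem~\ref{states-multipliers-new} in fact establishes the equality $\hh\varphi(L(y))=\hh\varphi(y)$ for every positive $y$ of finite weight, and in the discrete case $\hh\G$ is compact so $\hh\varphi$ is a state and every element has finite weight; the inequality in the general statement is only there to cover elements of infinite weight, which do not occur here. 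Either way the argument closes, and your version is arguably preferable since it does not require reopening the proof of the earlier theorem.
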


\section{Containment and weak containment of representations of locally compact quantum groups}

In this section we recall the notions of containment and weak containment for unitary representations of locally compact quantum groups.  Similar considerations can be found in the articles \cite{BCT} and \cite{ks}. Weak containment is defined in terms of the corresponding concept for representations of $C^*$-algebras, and so we begin by recalling this definition from \cite[Section~3.4]{dix}. A positive functional associated to a representation $\phi:\alg\rightarrow\mc B(\Hil)$ is one of the form $\omega_{x,x}\circ\phi$ for some $x\in\Hil$.

\begin{tw}\cite[Theorem~1.2]{fell}\label{thm:weakcon}
Let $\alg$ be a $C^*$-algebra, $\phi:\alg\rightarrow\mc B(\Hil)$ a representation,
and let $\mc S$ be a collection of representations of $\alg$.  The following
are equivalent, and define what it means for $\phi$ to be \emph{weakly-contained}
in $\mc S$, written $\phi \wkcon \mc S$:
\begin{enumerate}
\item $\ker\phi$ contains $\bigcap_{\pi\in\mc S} \ker\pi$;
\item every positive functional on $\alg$ associated to $\phi$ is the weak$^*$-limit
of linear combinations of positive functionals associated to representations in $\mc S$;
\item every positive functional on $A$ associated to $\phi$ is the weak$^*$-limit
of sums of positive functionals associated to representations in $\mc S$;
\item\label{thm:weakcon:four}
every positive functional $\omega$ associated to $\phi$ is the weak$^*$-limit
of sums of positive functionals associated to representations in $\mc S$ of norm at most $\|\omega\|$.
\end{enumerate}
\end{tw}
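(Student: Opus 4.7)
The plan is to prove the cycle $(4)\Rightarrow(3)\Rightarrow(2)\Rightarrow(1)\Rightarrow(4)$; the first two implications are formal (a sum is a particular linear combination, and the norm constraint in (4) is a strengthening of (3)). For $(2)\Rightarrow(1)$, I would pick $a\in\bigcap_{\pi\in\mc S}\ker\pi$ and note that each vector state $\omega_{\xi,\xi}\circ\pi$ with $\pi\in\mc S$ vanishes on $a^*a$; consequently every linear combination, and by weak$^*$-continuity every weak$^*$-limit of such, does too, so condition (2) forces $(\phi(a^*a)x|x)=\|\phi(a)x\|^2=0$ for all $x\in\Hil$ and thus $a\in\ker\phi$.

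The real content lies in $(1)\Rightarrow(4)$. I would first reduce to a single representation by taking $\pi_{\mc S}:=\bigoplus_{\pi\in\mc S}\pi$ on $\Hil_{\mc S}:=\bigoplus_{\pi\in\mc S}\Hil_\pi$. Then $\ker\pi_{\mc S}=\bigcap_{\pi\in\mc S}\ker\pi$, and via the decomposition $\xi=\sum_\pi \xi_\pi$ of vectors in $\Hil_{\mc S}$ one sees that sums of vector states of representations in $\mc S$ are precisely sums of vector states of $\pi_{\mc S}$. Given $\omega$ positive of norm $c$ associated to $\phi$, the inclusion $\ker\pi_{\mc S}\subseteq\ker\phi$ together with the standard Cauchy--Schwarz inequality for positive functionals shows that $\omega$ annihilates $\ker\pi_{\mc S}$, so it descends to a positive functional $\tilde\omega$ of the same norm $c$ on the $C^*$-subalgebra $B:=\pi_{\mc S}(\alg)\subseteq\mc B(\Hil_{\mc S})$. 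The claim (4) thus reduces to the following assertion about $B$: every positive functional of norm $c$ on $B$ lies in the weak$^*$-closure $K\subseteq B^*$ of the convex set of finite sums $\sum_i \omega_{\xi_i,\xi_i}|_B$ with $\sum_i \|\xi_i\|^2\leq c$.

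I would establish this density by Hahn--Banach separation. The set $K$ is convex (absorbing weights $\sqrt{\lambda}$, $\sqrt{1-\lambda}$ into the vectors shows that convex combinations of sums remain sums with the same bound on the total squared norm) and weak$^*$-closed by construction. If $\tilde\omega\notin K$ then there exists a self-adjoint $b\in B$ with $\tilde\omega(b)>\sup_{\rho\in K}\rho(b)$. Since $K$ contains the single vector states $\omega_{\xi,\xi}$ with $\|\xi\|^2\leq c$, concentrating $\xi$ in the positive spectral subspace of $b$ yields
\begin{equation*}
\sup_{\rho\in K}\rho(b)\geq\sup_{\|\xi\|^2\leq c}(b\xi|\xi)=c\|b_+\|,
\end{equation*}
while positivity of $\tilde\omega$ and the bound $\|\tilde\omega\|=c$ give $\tilde\omega(b)\leq\tilde\omega(b_+)\leq c\|b_+\|$, contradicting the strict inequality. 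The main obstacle in this scheme is precisely the last Hahn--Banach step with its sharp norm control: without the constraint $\sum_i\|\xi_i\|^2\leq\|\omega\|$ one recovers only (2) or (3) by much softer arguments, and retaining the constant $c=\|\omega\|$ demands the explicit spectral estimate above. An alternative route would be to extend $\tilde\omega$ to a normal positive functional on the enveloping von Neumann algebra $B''$ and invoke the classical representation of normal positive functionals as $\sum_i\omega_{\xi_i,\xi_i}$ with $\sum\|\xi_i\|^2=c$; this however requires some care to verify that such a normal extension actually exists, a subtlety that the Hahn--Banach approach bypasses.
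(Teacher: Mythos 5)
The paper does not actually prove this statement --- it is quoted verbatim as Fell's Theorem~1.2 (cf.\ also Dixmier, Section~3.4) --- so there is no in-paper argument to compare against. Your proof is correct and follows the standard route: the easy implications and $(2)\Rightarrow(1)$ are as you say, and for $(1)\Rightarrow(4)$ the reduction to the quotient $B=\pi_{\mc S}(\alg)$ followed by Hahn--Banach separation with the spectral estimate $\sup_{\|\xi\|^2\le c}(b\xi|\xi)=c\|b_+\|$ is precisely how the sharp norm bound in (4) is obtained classically; the only steps you elide are replacing the separating element by its real part (harmless, since $\tilde\omega$ and every $\rho\in K$ are hermitian) and the routine truncation argument showing that a vector state of $\bigoplus_{\pi\in\mc S}\pi$ is a norm limit of finite sums of vector states of members of $\mc S$ with the same bound on $\sum_i\|\xi_i\|^2$.
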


If $\phi$ is in addition
irreducible, we can avoid linear combinations, as one can show, adapting the argument from
\cite[Appendix~F]{bdv}.

For the rest of this section fix a locally compact quantum group $\QG$.  Using the bijection between unitary representations $U$ of $\QG$ on $\sH$ and $*$-homomorphisms $\phi_U:C_0^u(\hh\QG) \rightarrow \mc B(\sH)$ given by
$U = (\id\otimes\phi_U)(\wW)$,  one can define containment, weak-containment, equivalence, and weak-equivalence for unitary representations by importing the definitions for $\phi_U$, as in \cite[Section~2.3]{ks} (see also \cite[Section~5]{BT}).
\begin{deft}
Let $U,V$ be unitary representations of $\QG$ on respective Hilbert spaces $\sH_U$ and $\sH_V$, with respective $^*$-homomorphisms $\phi_U$ and $\phi_V$. Then
\begin{itemize}
\item $U$ is \emph{contained} in $V$ (that is,
$U$ is a sub-representation of $V$), which we denote by $U \leq V$,  if $\phi_U$ is contained in
$\phi_V$.  This means that there is an isometry $u:\sH_U\rightarrow \sH_V$
with $\phi_V(a) u = u\phi_U(a)$ for all $a\in C_0^u(\hh\QG)$.
Equivalently, $V (1\otimes u) = (1\otimes u) U$.
\item $U$ and $V$ are \emph{(unitarily) equivalent} if $\phi_U$ and $\phi_V$ are  equivalent, i.e.\ there is a unitary
$u:\sH_U \rightarrow \sH_V$ with $u \phi_U(a) = \phi_V(a) u$ for all
$a\in C_0^u(\hh\QG)$.
\item $U$ is \emph{weakly-contained} in $V$, which we denote by $U \wkcon V$ if
$\phi_U \wkcon \phi_V$.
\item $U$ and $V$ are \emph{weakly-equivalent} if both
$U\wkcon V$ and $V\wkcon U$.
\end{itemize}
\end{deft}

\begin{deft} Let $U\in \M(C_0(\QG) \ot \mc K(\Hil))$  be a representation of a locally compact quantum group $\QG$. A vector $\xi \in \Hil$ is said to be invariant for $U$
if $U(\eta\otimes\xi)=\eta\otimes\xi$ for all $\eta\in L^2(\QG)$. We  say that $U$ has \emph{almost invariant vectors} if there exists a net $(\xi_\alpha)$
of unit vectors in $\Hil$ such that $\| U(\eta\otimes\xi_\alpha) -
\eta\otimes\xi_\alpha \| \rightarrow 0$ for each $\eta\in L^2(\QG)$.
\end{deft}

The following proposition and corollary collects standard reformulations of containment of representations; compare with \cite[Proposition 5.1]{BT}.
\begin{prop}\label{prop:wheninv}
Let $U\in \M(C_0(\QG)\otimes\mc K(\Hil))$ be a unitary representation of $\QG$, with a corresponding adjointable operator
 $\mc U\in\mc L(C_0(\QG)\otimes\sH)$, and associated $C^*$-algebraic representation
$\phi_U:C_0^u(\hh\QG) \rightarrow \mc B(\sH)$.  Let $\xi\in\sH$.  Then the following
are equivalent:
\begin{enumerate}
\item $\xi$ is invariant for $U$;
\item $(\id\otimes\omega_{\xi,\eta})(U) = (\xi|\eta) 1$ for all $\eta\in\sH$;
\item $(\omega\otimes\id)(U) \xi = \ip{1}{\omega} \xi$ for all
  $\omega\in L^1(\G)$;
\item $\mc U(a\otimes\xi) = a\otimes\xi$ for all $a\in C_0(\QG)$.
\item $\phi_U(a)\xi = \hh\epsilon_u(a) \xi$ for all $a\in C_0^u(\hh\QG)$,
where $\hh\epsilon_u$ is the counit of $\hh \QG$.
\end{enumerate}
\end{prop}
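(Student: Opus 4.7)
The plan is to treat the five conditions as different ways of slicing the single unitary $U$, and to move between them by pairing with suitable states and using density. I would begin by establishing the triangle (1)$\Leftrightarrow$(2)$\Leftrightarrow$(4), since these are essentially rephrasings of the same equation on a Hilbert module / Hilbert space. For (1)$\Rightarrow$(2), I would take the inner product of $U(\eta\otimes\xi)=\eta\otimes\xi$ against $\zeta\otimes\eta'$ to get
\[
\bigl((\id\otimes\omega_{\xi,\eta'})(U)\eta\,\big|\,\zeta\bigr) = (\xi|\eta')(\eta|\zeta),
\qquad (\eta,\zeta\in L^2(\QG)),
\]
which, since $L^{\infty}(\QG)$ acts faithfully on $L^2(\QG)$, forces $(\id\otimes\omega_{\xi,\eta'})(U)=(\xi|\eta')1$ in $\M(C_0(\QG))$. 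The reverse direction is immediate from the same calculation, using that elementary tensors span a total set in $L^2(\QG)\otimes\sH$. The equivalence (2)$\Leftrightarrow$(4) is then read directly off formula \eqref{eq:mattthree}: condition (4) says $(\mc U(a\otimes\xi)|b\otimes\eta)=(\xi|\eta)b^*a$, and comparing with $(\mc U(a\otimes\xi)|b\otimes\eta)=b^*(\id\otimes\omega_{\xi,\eta})(U)a$ gives (2) after cancelling the $a,b$ using a suitable approximate identity.

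For (2)$\Leftrightarrow$(3), I would dualise the slice: condition (3) is equivalent, by pairing with an arbitrary $\eta\in\sH$, to
\[
\omega\bigl((\id\otimes\omega_{\xi,\eta})(U)\bigr)=\omega(1)(\xi|\eta),
\qquad (\omega\in L^1(\QG),\ \eta\in\sH),
\]
and since $L^1(\QG)$ separates points of $L^{\infty}(\QG)\supset \M(C_0(\QG))$ this is the same statement as $(\id\otimes\omega_{\xi,\eta})(U)=(\xi|\eta)1$ for all $\eta$, which is (2).

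Finally I would derive (3)$\Leftrightarrow$(5) from the defining relation $U=(\id\otimes\phi_U)(\wW)$. Slicing the first leg gives
\[
(\omega\otimes\id)(U) = \phi_U\bigl((\omega\otimes\id)(\wW)\bigr),
\qquad (\omega\in L^1(\QG)),
\]
while the counit identity $(\id\otimes\hh\epsilon_u)(\wW)=1\in \M(C_0(\QG))$ yields $\hh\epsilon_u\bigl((\omega\otimes\id)(\wW)\bigr)=\omega(1)$. Hence (3) is precisely the assertion that $\phi_U(a)\xi=\hh\epsilon_u(a)\xi$ on the subspace $\{(\omega\otimes\id)(\wW):\omega\in L^1(\QG)\}\subset C_0^u(\hh\QG)$; as this subspace is norm-dense in $C_0^u(\hh\QG)$ and both sides of (5) are norm-continuous in $a$, the equivalence with (5) follows.

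The only real subtlety I anticipate is the bookkeeping at the multiplier-algebra level: one must check that the scalar on the right-hand side of (2) really lives in $\M(C_0(\QG))$ rather than just in $\mc B(L^2(\QG))$, and that the slices $(\omega\otimes\id)(\wW)$ are indeed norm-dense in $C_0^u(\hh\QG)$ (so that (3) genuinely pins down $\phi_U$ on all of $C_0^u(\hh\QG)$). Both points are standard consequences of the structure theory of $\wW$ recalled in Subsection~\ref{lcqgssection}, but they should be flagged explicitly. Everything else is a routine manipulation of slice maps.
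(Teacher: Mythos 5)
Your proposal is correct; note that the paper itself gives no proof of this proposition, merely labelling the reformulations ``standard'' and pointing to \cite[Proposition~5.1]{BT}. The slice-map manipulations you describe --- pairing $U(\eta\otimes\xi)=\eta\otimes\xi$ against elementary tensors for (1)$\Leftrightarrow$(2), reading (4) off \eqref{eq:mattthree} together with nondegeneracy of $C_0(\QG)$, using that $L^1(\QG)$ separates points of $L^\infty(\QG)$ for (3), and the density of $\{(\omega\otimes\id)(\wW):\omega\in L^1(\QG)\}$ in $C_0^u(\hh\QG)$ together with $(\id\otimes\hh\epsilon_u)(\wW)=1$ for (5) --- are exactly the intended standard argument, and the two subtleties you flag are indeed the only points needing care.
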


\begin{cor}
A representation $U$ of a locally compact quantum group  has a non-zero invariant vector if and only if
$1\leq U$.
\end{cor}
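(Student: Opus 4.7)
The plan is to prove the corollary by directly unpacking the definition of containment in terms of intertwining isometries and applying the equivalent characterisations of invariance from Proposition~\ref{prop:wheninv}. The key observation is that an isometry $u:\sH_1 = \bc \to \sH_U$ intertwining the trivial and $U$ representations corresponds to a single unit vector $\xi = u(1) \in \sH_U$, and the intertwining condition is then precisely one of the equivalent conditions for $\xi$ to be invariant.

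For the ``if'' direction, suppose $1 \leq U$, so there is an isometry $u:\bc \to \sH_U$ with $\phi_U(a) u = u \phi_1(a)$ for all $a \in C_0^u(\hh\QG)$. Since the trivial representation corresponds under the bijection $V \leftrightarrow \phi_V$ to the counit, we have $\phi_1 = \hh\epsilon_u$. Setting $\xi = u(1) \in \sH_U$, which is a unit vector (hence nonzero), the intertwining relation applied at $1 \in \bc$ gives $\phi_U(a)\xi = \hh\epsilon_u(a)\xi$ for all $a \in C_0^u(\hh\QG)$. By condition (5) of Proposition~\ref{prop:wheninv}, this means $\xi$ is invariant for $U$.

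For the ``only if'' direction, suppose $U$ has a nonzero invariant vector; rescaling, assume it is a unit vector $\xi \in \sH_U$. Define $u:\bc \to \sH_U$ by $u(z) = z\xi$, which is an isometry. Invariance means $U(\eta \otimes \xi) = \eta \otimes \xi$ for all $\eta \in L^2(\QG)$, which by Proposition~\ref{prop:wheninv}(5) is equivalent to $\phi_U(a)\xi = \hh\epsilon_u(a)\xi$ for all $a \in C_0^u(\hh\QG)$. Equivalently, $\phi_U(a) u = u\,\hh\epsilon_u(a) = u\,\phi_1(a)$, which shows $\phi_1 \leq \phi_U$, i.e.\ $1 \leq U$.

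There is no real obstacle here; the argument is a routine translation between the representation-theoretic and $C^*$-algebraic pictures, relying entirely on Proposition~\ref{prop:wheninv} together with the identification of the trivial representation with the counit $\hh\epsilon_u$ of $\hh\QG$ under the correspondence $V \leftrightarrow \phi_V$.
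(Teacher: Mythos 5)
Your argument is correct and is exactly the one the paper intends: the corollary is stated as an immediate consequence of Proposition~\ref{prop:wheninv}, and your translation between an intertwining isometry $u:\bc\to\sH_U$ and a unit invariant vector $\xi=u(1)$ via condition (5) of that proposition is precisely the implicit proof. Nothing is missing.
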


We now turn to characterisations of representations $U$ which weakly contain the trivial representation. We begin with a  preparatory technical lemma.

\begin{lem}\label{lem:compact}
For any $a\in C_0(\G)$ and $\omega_0\in L^1(\G)$, the set
$\{ (a\omega a^*)*\omega_0 : \omega\in L^1(\G), \|\omega\|\leq 1 \}$
is relatively compact in $L^1(\G)$.
\end{lem}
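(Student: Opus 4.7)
My plan is to prove the equivalent statement that the bounded linear map
\[
T \colon L^1(\G) \to L^1(\G), \qquad T(\omega) = (a\omega a^*) * \omega_0,
\]
is a compact operator; the set in the lemma is then $T(\mathrm{Ball}(L^1(\G)))$, which is automatically relatively norm-compact. A direct computation using $\omega_1 * \omega_2 = (\omega_1 \otimes \omega_2)\Delta$ and $(a\omega a^*)(y) = \omega(a^*ya)$ yields the pre-adjoint
\[
T^*\colon L^\infty(\G) \to L^\infty(\G), \qquad T^*(x) = (\id \otimes \omega_0)\bigl((a^* \otimes 1)\Delta(x)(a \otimes 1)\bigr);
\]
by Schauder's theorem, compactness of $T$ is equivalent to compactness of $T^*$.

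I would then exploit the fact that $L^1(\G)$ is a Banach module over $C_0(\G)$ with a bounded approximate identity (coming from any bounded approximate identity of $C_0(\G)$), so Cohen factorisation yields a decomposition $\omega_0 = \omega_0' \cdot c$ for some $c \in C_0(\G)$ and $\omega_0' \in L^1(\G)$, giving
\[
T^*(x) = (\id \otimes \omega_0')\bigl((a^* \otimes c)\Delta(x)(a \otimes 1)\bigr).
\]
By the cancellation axiom $\overline{\Delta(C_0(\G))(C_0(\G) \otimes 1)} = C_0(\G) \otimes C_0(\G)$, for $x \in C_0(\G)$ the element $(a^* \otimes c)\Delta(x)(a \otimes 1)$ lives in $C_0(\G) \otimes C_0(\G)$, and on bounded subsets it is approximable in norm by finite sums of elementary tensors; composing with $\omega_0'$ in the second leg then produces finite-rank approximations of $T^*|_{C_0(\G)}$.

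To extend compactness from $C_0(\G)$ to all of $L^\infty(\G)$ I would invoke normality of $T^*$ together with the sigma-weak density of $C_0(\G)$ in $L^\infty(\G)$, concluding that $T^*$ is a norm-limit of finite-rank operators, hence compact. The main obstacle is producing the uniform finite-rank approximation of the compressed coproduct map $x \mapsto (a^* \otimes c)\Delta(x)(a \otimes 1)$; morally, this is a quantum Riesz--Kolmogorov--Tamarkin phenomenon, where the $C_0$-compression by $a$ supplies ``tightness at infinity'' and convolution with $\omega_0$ supplies ``equicontinuity of translates'', the combination of which upgrades weak boundedness to norm compactness.
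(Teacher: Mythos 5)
Your reduction of the lemma to compactness of the operator $T\colon\omega\mapsto(a\omega a^*)*\omega_0$ is the right framing---the paper's proof in effect exhibits $T$ as a norm limit of finite-rank maps and extracts a finite $3\epsilon$-net for $T(\mathrm{Ball}(L^1(\G)))$---but the step you yourself flag as ``the main obstacle'' is a genuine gap, and the tools you propose do not close it. Knowing that each element $(a^*\otimes c)\Delta(x)(a\otimes 1)$ lies in $C_0(\G)\otimes C_0(\G)$ (via the cancellation law, or \cite[Corollary~6.11]{KV}) only says that \emph{for each fixed $x$} it is approximable by finite sums of elementary tensors; it gives no uniformity over the unit ball in $x$. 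Bounded subsets of $C_0(\G)\otimes C_0(\G)$ are not uniformly approximable by a fixed finite family of elementary tensors (the unit ball itself is a counterexample, as the identity operator is not compact), so the passage from ``lands in the minimal tensor product'' to ``finite-rank approximation of $T^*$'' is precisely the content of the lemma and cannot be asserted. There is a secondary issue at the end: even granting compactness of $T^*|_{C_0(\G)}$, deducing compactness of $T^*$ on all of $L^\infty(\G)$ from normality plus $\sigma$-weak density needs an extra continuity argument, since norm-compactness does not pass through $\sigma$-weak limits for free.

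The paper closes the gap by isolating where the $x$- and $\omega$-dependence sits. Writing $\omega_0=\omega_{\xi_0,\eta_0}$, choosing compact operators $\theta_1,\theta_2$ with $\theta_1\xi_0=\xi_0$ and $\theta_2\eta_0=\eta_0$, and substituting $\Delta(x)=W^*(1\otimes x)W$, one finds that $\ip{x}{(a\omega a^*)*\omega_0}$ is a pairing of $1\otimes x$ against vectors built from the two \emph{fixed} elements $W(a\otimes\theta_1)$ and $W(a\otimes\theta_2)$, which lie in $C_0(\G)\otimes\mc K(L^2(\G))$ because $W\in\M(C_0(\G)\otimes\mc K(L^2(\G)))$. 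Approximating these two fixed elements---rather than anything depending on $x$ or $\omega$---by finite sums of elementary tensors yields an estimate uniform over $\|x\|\leq 1$ and $\|\omega\|\leq 1$ simultaneously, hence a finite-rank approximation of $T$ and a finite net for its image. If you wish to keep your $T^*$/Schauder formulation, this same substitution together with the compact cut-offs $\theta_i$ is what supplies the ``tightness'' you describe; Cohen factorisation by an element $c\in C_0(\G)$ in the second leg does not, because $C_0(\G)$ sitting in the second leg does not interact with $W$ the way the ideal $\mc K(L^2(\G))$ does.
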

\begin{proof}
Throughout the proof we use the fact that $L^{\infty}(\G)$ is in the standard position in $\mathcal B(L^2(\G))$, so that in particular any element $\omega\in L^1(\G)$ can be represented in the form $\omega_{\xi,\eta}$ for $\xi,\eta\in L^2(\G)$ with $\|\xi\| \|\eta\| = \|\omega\|$.
Let $\omega_0=\omega_{\xi_0,\eta_0}$ for some $\xi_0,\eta_0\in L^2(\QG)$.
Choose compact operators $\theta_1,\theta_2\in\mc K(L^2(\QG))$ with
$\theta_1(\xi_0)=\xi_0$ and $\theta_2(\eta_0)=\eta_0$.  Let $\epsilon>0$, and recalling that $W\in M(C_0(\QG)\otimes \mc K(L^2(\QG)))$, we can find linear combinations of elementary tensors
$\sum_{n=1}^N a^{(1)}_n \otimes \theta^{(1)}_n, \sum_{n=1}^N a^{(2)}_n \otimes \theta^{(2)}_n \in C_0(\QG)\otimes \mc K(L^2(\QG))$ with
\begin{equation}
\Big\| W(a\otimes\theta_1) - \sum_{n=1}^N a^{(1)}_n \otimes \theta^{(1)}_n
\Big\| < \epsilon, \quad
\Big\| W(a\otimes\theta_2) - \sum_{n=1}^N a^{(2)}_n \otimes \theta^{(2)}_n
\Big\| < \epsilon.
\end{equation}
For $\omega=\omega_{\alpha,\beta}$ for some $\alpha,\beta\in L^2(\QG)$ with $\|\alpha\|,\|\beta\|\leq 1$, then for $x\in L^\infty(\G)$,
\begin{align*} \ip{x}{(a\omega a^*) * \omega_0}
&= \ip{(a^*\otimes 1) \Delta(x)(a\otimes 1)}{\omega \otimes \omega_0} \\
&= \big( (a^*\otimes 1) W^*(1\otimes x)W(a\otimes 1)(\alpha\otimes\xi_0)
\big| \beta\otimes\eta_0 \big) \\
&= \big( (1\otimes x)W(a\otimes \theta_1)(\alpha\otimes\xi_0)
\big| W(a\otimes\theta_2)(\beta\otimes\eta_0) \big).
\end{align*}
It follows that \begin{equation}
\left|\ip{x}{(a\omega a^*) * \omega_0}-\sum_{n,m=1}^N ( a^{(1)}_n \alpha | a^{(2)}_m \beta )
( x \theta^{(1)}_n \xi_0 | \theta^{(2)}_n \eta_0 )\right|<2\epsilon\|x\|,\qquad (x\in L^\infty(\QG)),
\end{equation}
and hence $\|(a\omega a^*)*\omega_0- \sum_{n,m=1}^N \ip{ (a^{(2)}_m)^* a^{(1)}_n }{\omega} \omega_{ \theta^{(1)}_n \xi_0 , \theta^{(2)}_n \eta_0 }\|\leq 2\epsilon$ for all $\|\omega\|\leq 1$.  Now, the set
\begin{equation}
\left\{\sum_{n,m=1}^N\ip{ (a^{(2)}_m)^* a^{(1)}_n }{\omega}
\omega_{ \theta^{(1)}_n \xi_0 , \theta^{(2)}_n \eta_0 }:\omega\in L^1(\QG),\ \|\omega\|\leq1\right\}
\end{equation}
is clearly compact and so has a finite $\epsilon$-net, which hence forms a finite $3\epsilon$-net for $\{ (a\omega a^*)*\omega_0 : \omega\in L^1(\G), \|\omega\|\leq 1 \}$.
\end{proof}

Next we record characterisations of those representations $U$ which weakly contain the trivial representation. As $\hh\epsilon_u$ is irreducible, these are precisely those for which $\hh\epsilon_u$ is the weak$^*$-limit of states
of the form $\omega_{\xi}\circ\phi_U$; this is condition
(\ref{prop:inv_vects:four}) in the following proposition (for an alternative
approach, see \cite[Theorem~5.1]{BT}).  In \cite{BCT} and \cite[Section~5]{BT} the terminology $U$ has WCP (the weak containment property) is used for those representations $U$ with $\hh\epsilon_u\wkcon \phi_U$ --- here we use the terminology that $U$ has almost invariant vectors for this condition.

\begin{prop}\label{prop:inv_vects}
Let $U\in \M(C_0(\QG)\otimes\mc \mc K(\Hil))$ be a unitary representation of $\QG$, with a corresponding adjointable operator
 $\mc U\in\mc L(C_0(\QG)\otimes\sH)$, and associated $C^*$-algebraic representation
$\phi_U:C_0^u(\hh\QG) \rightarrow \mc B(\sH)$.  Let $(\xi_\alpha)$ be a net
of unit vectors in $\Hil$.  The following are equivalent:
\begin{enumerate}
\item\label{prop:inv_vects:one}
$\| U(\eta\otimes\xi_\alpha) -
\eta\otimes\xi_\alpha \| \rightarrow 0$ for each $\eta\in L^2(\QG)$;
\item\label{prop:inv_vects:six}
The net $(\id\otimes\omega_{\xi_\alpha})(U)$ converges weak$^*$ to $1$
in $L^\infty(\QG)$;
\item\label{prop:inv_vects:four}
The net of states $(\omega_{\xi_\alpha} \circ \phi_U)_\alpha$ on $C_0^u(\hh \QG)$  converges weak$^*$
to $\hh\epsilon_u$ in $C_0^u(\hh\QG)^*$;
\item\label{prop:inv_vects:two}
$\|\phi_U(a)\xi_\alpha - \hh\epsilon_u(a) \xi_\alpha\| \rightarrow 0$
for all $a\in C_0^u(\hh\QG)$;
\item\label{prop:inv_vects:five}
$\| \mc U(a\otimes\xi_\alpha) - a\otimes\xi_\alpha \| \rightarrow 0$
for all $a\in C_0(\QG)$;
\end{enumerate}
Moreover the existence of a net of unit vectors satisfying the equivalent conditions above is equivalent to the following statement:
\begin{enumerate}
\setcounter{enumi}{5}
\item\label{prop:inv_vects:three}
there is a state $\mu_0\in \mc B(\sH)^*$ such that $(\id\otimes\mu_0)(U)
= 1 \in L^\infty(\QG)$.
\end{enumerate}
\end{prop}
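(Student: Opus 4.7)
The plan is to prove $(1)\Leftrightarrow(2)\Leftrightarrow(3)\Leftrightarrow(4)$ as pointwise equivalences on the given net, include $(5)$ via $(5)\Rightarrow(1)$ (easy) and $(1)\Rightarrow(5)$ (where I would use Lemma~\ref{lem:compact}), and handle the existence statement $(6)$ separately. The main obstacle will be $(1)\Rightarrow(5)$.

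For $(1)\Leftrightarrow(2)$ I would use the direct calculation $\|U(\eta\otimes\xi_\alpha)-\eta\otimes\xi_\alpha\|^2 = 2\|\eta\|^2 - 2\,\mathrm{Re}((\id\otimes\omega_{\xi_\alpha})(U)\eta|\eta)$, so that $(1)$ is equivalent to WOT-convergence of $(\id\otimes\omega_{\xi_\alpha})(U)$ to $1$; since these slices have norm at most $1$, WOT- and weak$^*$-convergence coincide, giving $(2)$. For $(2)\Leftrightarrow(3)$ I would set $\mu_\alpha:=\omega_{\xi_\alpha}\circ\phi_U\in C_0^u(\hh\QG)^*$, so that the universal relation $U=(\id\otimes\phi_U)(\wW)$ yields $(\id\otimes\omega_{\xi_\alpha})(U) = (\id\otimes\mu_\alpha)(\wW)$, and use $(\id\otimes\hh\epsilon_u)(\wW)=1$. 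The map $\mu\mapsto(\id\otimes\mu)(\wW)$ is weak$^*$-to-weak$^*$ continuous with injective dual, its pre-adjoint $\omega\mapsto(\omega\otimes\id)(\wW)$ having norm-dense range in $C_0^u(\hh\QG)$; combined with the weak$^*$-compactness of the state space this yields the equivalence. For $(3)\Leftrightarrow(4)$ the forward direction follows by applying $\omega_{\xi_\alpha}$; the reverse follows by expanding $\|\phi_U(a)\xi_\alpha-\hh\epsilon_u(a)\xi_\alpha\|^2$, whose limit reduces via $(3)$ to $\hh\epsilon_u(a^*a)-|\hh\epsilon_u(a)|^2$ and vanishes because $\hh\epsilon_u$ is a character.

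The implication $(5)\Rightarrow(1)$ is immediate since the Hilbert-module norm on $C_0(\QG)\otimes\Hil$ equals the operator norm from $L^2(\QG)$ to $L^2(\QG)\otimes\Hil$, and $C_0(\QG)\cdot L^2(\QG)$ is dense in $L^2(\QG)$. For the main step $(1)\Rightarrow(5)$ I would start from the identity $\|(\mc U-1)(a\otimes\xi_\alpha)\|^2 = \|a^*b_\alpha a\|_{C_0(\QG)}$, where $b_\alpha := (\id\otimes\omega_{\xi_\alpha})((U-1)^*(U-1))\in \M(C_0(\QG))^+$ is a bounded net tending to $0$ weak$^*$ in $L^\infty(\QG)$ by $(2)$. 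The task is thus norm convergence of $a^*b_\alpha a$ in $C_0(\QG)$. Duality rewrites this as $\sup_{\omega\in L^1(\QG),\,\|\omega\|\le 1}|\langle b_\alpha, a\omega a^*\rangle|$, and weak$^*$-convergence alone would give uniformity only if $\{a\omega a^*:\|\omega\|\le 1\}$ were relatively compact, which it is not in general. This is exactly where Lemma~\ref{lem:compact} enters, providing relative compactness of $\{(a\omega a^*)*\omega_0:\|\omega\|\le 1\}$ for any fixed $\omega_0$; via the coproduct identity $\langle b_\alpha,\mu*\omega_0\rangle = \langle (\id\otimes\omega_0)\Delta(b_\alpha),\mu\rangle$ and a judicious choice of $\omega_0$ behaving close to a convolution unit on the relevant functionals, one reduces the sought uniformity to pairings on the compact set, where weak$^*$ convergence suffices.

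For the equivalence with $(6)$: given a net satisfying $(1)$--$(5)$, any weak$^*$-cluster point $\mu_0$ of $\omega_{\xi_\alpha}$ in the (weak$^*$-compact) state space of $\mc B(\Hil)$ inherits $(\id\otimes\mu_0)(U)=1$ on pairing with $\omega\in L^1(\QG)$ and applying $(2)$. Conversely, given such a $\mu_0$, the identity $(\id\otimes\mu_0)(U)=1$ translates to $\mu_0\circ\phi_U = \hh\epsilon_u$ on $C_0^u(\hh\QG)$, so $\hh\epsilon_u$ is weakly contained in $\phi_U$; since $\hh\epsilon_u$ is pure as a character, the remark following Theorem~\ref{thm:weakcon} supplies a net of vector states $\omega_{\xi_\alpha}\circ\phi_U$ of $\phi_U$ converging weak$^*$ to $\hh\epsilon_u$, which realises $(3)$ for the corresponding unit vectors in $\Hil$.
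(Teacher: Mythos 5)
Your overall architecture matches the paper's: the easy equivalences are handled directly (the paper dismisses (1)$\Leftrightarrow$(2)$\Leftrightarrow$(3)$\Leftrightarrow$(4)$\Leftrightarrow$(6) as straightforward), and the hard implication (1)$\Rightarrow$(5) is reduced, via $\|\mc U(a\otimes\xi_\alpha)-a\otimes\xi_\alpha\|^2=\|a^*b_\alpha a\|$ with $b_\alpha=(\id\otimes\omega_{\xi_\alpha})\bigl((U-1)^*(U-1)\bigr)=2-x_\alpha-x_\alpha^*$ and $x_\alpha=(\id\otimes\omega_{\xi_\alpha})(U)$, to showing $\sup_{\|\omega\|\leq1}|\langle b_\alpha,a\omega a^*\rangle|\to0$, with Lemma~\ref{lem:compact} supplying the compactness that upgrades weak$^*$ convergence of $x_\alpha$ to the required uniformity. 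All of that, and both directions of the equivalence with (6), is sound as sketched.

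The gap is in the one sentence carrying the whole weight of (1)$\Rightarrow$(5): you justify replacing $a\omega a^*$ by $(a\omega a^*)*\omega_0$ by ``a judicious choice of $\omega_0$ behaving close to a convolution unit on the relevant functionals''. No such choice is available: $L^1(\G)$ has a bounded approximate identity for convolution only when $\G$ is coamenable, and even on the set $\{a\omega a^*:\|\omega\|\leq1\}$ there is no reason a single state $\omega_0$ makes $\mu*\omega_0$ uniformly close to $\mu$. The correct mechanism --- and the actual point of the paper's proof --- is that the harmlessness of the replacement has nothing to do with $\omega_0$ and everything to do with $b_\alpha$ being a coefficient of the representation $U$: putting $T_{\omega_0}=(\omega_0\otimes\id)(U)$, the identity $(\Delta\otimes\id)(U)=U_{13}U_{23}$ gives exactly $(\id\otimes\omega_0)\Delta(x_\alpha)=(\id\otimes\omega_{T_{\omega_0}\xi_\alpha,\xi_\alpha})(U)$, whence
\begin{equation*}
\bigl\| b_\alpha-(\id\otimes\omega_0)\Delta(b_\alpha)\bigr\| \leq 2\,\bigl\|T_{\omega_0}\xi_\alpha-\xi_\alpha\bigr\| \longrightarrow 0
\end{equation*}
for \emph{any} fixed state $\omega_0=\omega_{\xi_0}$, since $\|T_{\omega_0}\xi_\alpha-\xi_\alpha\|\to0$ is an instance of condition (4) applied to $(\omega_0\otimes\id)(\wW)$. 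Combined with $\langle b_\alpha,(a\omega a^*)*\omega_0\rangle=\langle(\id\otimes\omega_0)\Delta(b_\alpha),a\omega a^*\rangle$, this makes the replacement uniformly harmless over $\|\omega\|\leq1$, after which Lemma~\ref{lem:compact} finishes as you intend. So your strategy is salvageable, but the missing ingredient is precisely this representation-theoretic identity converting convolution by $\omega_0$ into the action of $T_{\omega_0}$ on the almost-invariant vectors; as written, the step would fail.
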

\begin{proof}

The equivalences (1)$\Longleftrightarrow$(2)$\Longleftrightarrow$(3)$\Longleftrightarrow$(4)$\Longleftrightarrow$(6) are either straightforward or can be viewed as variants of the results obtained
in the articles \cite{BT} and \cite{BMT}. We thus look only at (\ref{prop:inv_vects:five}).  If (\ref{prop:inv_vects:five})
holds, then
\begin{equation}
 0 = \lim_\alpha \big( \mc U(a\otimes\xi_\alpha) - a\otimes\xi_\alpha
\big| b\otimes\xi_\alpha \big)
= \lim_\alpha b^* (\id\otimes\omega_{\xi_\alpha})(U) a
- b^* a,\qquad (a,b\in C_0(\QG)),
\end{equation}
from which (\ref{prop:inv_vects:six}) follows, as $C_0(\G)$ acts non-degenerately on $L^2(\G)$.

We prove the converse using Lemma \ref{lem:compact}.  For $\omega\in L^1(\G)$, let $T_\omega
= (\omega\otimes\id)(U)$, so as $U$ is a representation, $T_{\omega'}
T_\omega = T_{\omega'*\omega}$.  Suppose that (\ref{prop:inv_vects:two}) holds,
so as $T_\omega = \phi_U((\omega\otimes\id)\wW)$, it follows that
$\|T_\omega \xi_\alpha - \ip{1}{\omega}
\xi_\alpha \| \rightarrow 0$ for all $\omega\in L^1(\G)$.
Fix $a\in C_0(\G)$ and $\xi_0\in L^2(\G)$ both of norm one, and set
$\omega_0 = \omega_{\xi_0}\in L^1(\G)$.  Now consider
\begin{align}
\big\| \mc U(a\otimes T_{\omega_0}\xi_\alpha) - a\otimes\xi_\alpha \big\|^2
&= \big\| \|T_{\omega_0}\xi_\alpha\|^2 a^*a + a^*a
   - 2\Re\big( (\mc U(a\otimes T_{\omega_0}\xi_\alpha) |
   a\otimes\xi_\alpha) \big) \big\|.
\end{align}
We then see that
\begin{align}
\big( \mc U(a\otimes T_{\omega_0}\xi_\alpha) \big| a\otimes\xi_\alpha \big)
&= a^* ( \id\otimes\omega_{T_{\omega_0}\xi_\alpha,\xi_\alpha})(U) a
= a^* ( \id\otimes\omega_{\xi_\alpha})(U(1\otimes T_{\omega_0})) a \nonumber\\
&= a^* (\id\otimes\omega_0\otimes\omega_{\xi_\alpha})(U_{13} U_{23}) a
= a^* (\id\otimes\omega_0\otimes\omega_{\xi_\alpha})
   ((\Delta\otimes\id)(U)) a \nonumber\\
&= a^* (\id\otimes\omega_0)\Delta\big( (\id\otimes\omega_{\xi_\alpha})(U)
   \big) a.\label{2.14}
\end{align}
As $\omega_0$ is a state, $\lim_\alpha \|T_{\omega_0}\xi_\alpha - \xi_\alpha\| =0$,
and so
\begin{equation} \lim_\alpha \big\| \mc U(a\otimes \xi_\alpha)
   - a\otimes\xi_\alpha \big\|
= \lim_\alpha \big\| \mc U(a\otimes T_{\omega_0}\xi_\alpha)
   - a\otimes\xi_\alpha \big\|,
 \end{equation}
and by (\ref{2.14}), this limit will be zero if and only if
\begin{equation} \lim_\alpha a^* (\id\otimes\omega_0)\Delta\big(
   (\id\otimes\omega_{\xi_\alpha})(U) \big) a   =  a^*a. \label{eq:matttwo}
\end{equation}
As (\ref{prop:inv_vects:two}) holds, and hence (\ref{prop:inv_vects:six}) holds,
it follows that
\begin{align}\ip{a^*a}{\omega} &= \ip{1}{(a\omega a^*)*\omega_0}
= \lim_\alpha \ip{(\id\otimes\omega_{\xi_\alpha})(U)}
   {(a\omega a^*)*\omega_0}\nonumber\\
&= \lim_\alpha \ip{a^* (\id\otimes\omega_0)
   \Delta\big((\id\otimes\omega_{\xi_\alpha})(U) \big) a}{\omega},\label{2.17}
 \end{align}
 for each $\omega\in L^1(\G)$.  By Lemma~\ref{lem:compact} the set $\{ (a\omega a^*)*\omega_0 :
\omega\in L^1(\G), \|\omega\|\leq 1 \}$ is relatively compact in $L^1(\G)$ and hence the limit in (\ref{2.17})
holds uniformly over the set $\{ \omega\in L^1(\G): \|\omega\|\leq 1 \}$.
This implies that (\ref{eq:matttwo}) holds, as required to show that (\ref{prop:inv_vects:five}) holds.
\end{proof}

\begin{cor}\label{prop:inv_vects1}
A representation $U$ of a locally compact quantum group  admits almost invariant vectors if and only if
$1\wkcon U$ (equivalently, $\hh\epsilon_u$ is weakly-contained in $\phi_U$).
\end{cor}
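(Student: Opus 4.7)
The plan is to deduce this corollary directly from the equivalences in Proposition~\ref{prop:inv_vects} together with Fell's characterization of weak containment (Theorem~\ref{thm:weakcon}), using crucially that the counit $\hh\epsilon_u$ is one-dimensional, hence irreducible.

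First I would recall that, by definition, $1 \wkcon U$ means precisely $\hh\epsilon_u \wkcon \phi_U$ as representations of $C_0^u(\hh\QG)$, since the trivial representation corresponds under the universal bijection $U=(\id\otimes\phi_U)(\wW)$ to $\phi_1=\hh\epsilon_u$. The positive functionals associated to the one-dimensional representation $\hh\epsilon_u$ are exactly the scalar multiples $\lambda\hh\epsilon_u$ for $\lambda\geq 0$, so as remarked after Theorem~\ref{thm:weakcon} (applicable because $\hh\epsilon_u$ is irreducible), the condition $\hh\epsilon_u\wkcon\phi_U$ is equivalent to the existence of a net of vectors $(x_\alpha)$ in $\sH_U$ with $\|x_\alpha\|\leq 1$ such that $\omega_{x_\alpha,x_\alpha}\circ\phi_U \to \hh\epsilon_u$ weak$^*$ in $C_0^u(\hh\QG)^*$.

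For the ``if'' direction, assume $1\wkcon U$ and take such a net $(x_\alpha)$. Evaluating at the unit $1\in C_0^u(\hh\QG)$ gives $\|x_\alpha\|^2 \to \hh\epsilon_u(1)=1$, so eventually $x_\alpha\neq 0$ and the normalised vectors $\xi_\alpha := x_\alpha/\|x_\alpha\|$ still satisfy $\omega_{\xi_\alpha}\circ\phi_U \to \hh\epsilon_u$ weak$^*$. This is exactly condition (\ref{prop:inv_vects:four}) of Proposition~\ref{prop:inv_vects}, so the equivalent condition (\ref{prop:inv_vects:one}) holds, i.e.\ $U$ has almost invariant vectors. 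Conversely, if $U$ admits almost invariant unit vectors $(\xi_\alpha)$, then condition (\ref{prop:inv_vects:one}) of Proposition~\ref{prop:inv_vects} holds, and its equivalence with (\ref{prop:inv_vects:four}) gives $\omega_{\xi_\alpha}\circ\phi_U \to \hh\epsilon_u$ weak$^*$; these are positive functionals of norm $1$ associated to $\phi_U$, so condition (\ref{thm:weakcon:four}) of Theorem~\ref{thm:weakcon} is satisfied and $\hh\epsilon_u\wkcon\phi_U$, i.e.\ $1\wkcon U$.

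There is essentially no obstacle here: all the substantive work has been done in Proposition~\ref{prop:inv_vects}, whose hard direction required Lemma~\ref{lem:compact}. The only point requiring a moment's care is the normalisation step in the ``if'' direction, where one must verify that the approximating vectors can be taken of unit norm; this is automatic from evaluation at $1$, since $\hh\epsilon_u$ is a state.
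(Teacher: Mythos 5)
Your argument is correct and is exactly the route the paper intends: the remark preceding Proposition~\ref{prop:inv_vects} reduces the corollary to the equivalence of conditions (1) and (4) there, using irreducibility of $\hh\epsilon_u$ together with Theorem~\ref{thm:weakcon}~(\ref{thm:weakcon:four}) to pass to single vector states of norm at most one. The only imprecision is the phrase ``evaluating at the unit'': $C_0^u(\hh\QG)$ is unital only when $\QG$ is discrete, so in general one evaluates along an approximate identity $(e_\lambda)$ to get $\liminf_\alpha\|x_\alpha\|^2\geq\sup_\lambda\hh\epsilon_u(e_\lambda)=1$, whence $\|x_\alpha\|\to1$ and your normalisation step goes through unchanged.
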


\section{Mixing representations}

In this section we introduce  mixing (or $C_0$) representations of locally compact quantum groups and analyse their properties.

\begin{deft}
 A representation $U\in \M(C_0(\QG) \ot \mc K(\Hil))$ is said  to be \emph{mixing} if it has $C_0$--coefficients, which means that for all $\xi, \eta \in \Hil$, we have $(\id \ot \omega_{\xi, \eta}) (U)\in C_0(\QG)$.
\end{deft}

The origins of the term mixing lie in the theory of dynamical systems -- an action of a group $G$ on a probability space $(X,\mu)$ is mixing in the usual dynamical sense (see Definition 3.4.6 in \cite{Glasner}) if and only if the associated Koopman-type representation of $G$ on $L^2(X,\mu)_0:=L^2(X,\mu) \ominus \bc 1$ is mixing.
\begin{prop}\label{prop:easy_prop_mix}
Let $U,V$ be representations of $\G$.  Then:
\begin{enumerate}
\item\label{prop:easy_prop_mix:one}
If $U$ and $V$ are mixing, then so is $U\oplus V$.
\item\label{prop:easy_prop_mix:two}
If $U$ is mixing, then so are $U\tp V$ and $V\tp U$.
\end{enumerate}
\end{prop}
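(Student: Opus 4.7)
The plan is to reduce both claims to explicit formulae expressing matrix coefficients of the composite representation in terms of matrix coefficients of $U$ and $V$ individually, and then to exploit the fact that $C_0(\QG)$ is a closed two-sided ideal in $\M(C_0(\QG))$.

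For (\ref{prop:easy_prop_mix:one}), I would start from the explicit formula for $U\oplus V$ given just before the statement. Decomposing $\xi=\iota_U\xi_U+\iota_V\xi_V$ and $\eta=\iota_U\eta_U+\iota_V\eta_V$ in $\sH_U\oplus\sH_V$, and using $\iota_U^*=p_U$ and $\iota_V^*=p_V$, a direct calculation yields
\[ (\id\ot\omega_{\xi,\eta})(U\oplus V) = (\id\ot\omega_{\xi_U,\eta_U})(U) + (\id\ot\omega_{\xi_V,\eta_V})(V), \]
which lies in $C_0(\QG)$ by the mixing hypothesis on $U$ and $V$.

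For (\ref{prop:easy_prop_mix:two}), the key observation is that under the identification $\mc K(\sH_U\ot\sH_V)\cong\mc K(\sH_U)\ot\mc K(\sH_V)$ the vector functional $\omega_{\xi_U\ot\xi_V,\eta_U\ot\eta_V}$ factors as $\omega_{\xi_U,\eta_U}\ot\omega_{\xi_V,\eta_V}$. Slicing the third leg of $U\tp V=U_{12}V_{13}$ first and then the second yields
\[ (\id\ot\omega_{\xi_U\ot\xi_V,\eta_U\ot\eta_V})(U\tp V) = (\id\ot\omega_{\xi_U,\eta_U})(U)\cdot(\id\ot\omega_{\xi_V,\eta_V})(V). \]
If $U$ is mixing, the left factor lies in $C_0(\QG)$ while the right one sits in $\M(C_0(\QG))$, so the product belongs to $C_0(\QG)$ by the ideal property. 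A density argument, using that elementary tensors are total in $\sH_U\ot\sH_V$ together with the norm continuity of $(\xi,\eta)\mapsto\omega_{\xi,\eta}$, extends the conclusion to arbitrary $\xi,\eta$. The case $V\tp U=V_{12}U_{13}$ is entirely symmetric, with $U$ now appearing on the right of the product and the ideal property absorbing the multiplier on the left. The argument is essentially bookkeeping; the only mild point to verify is that sliced operators such as $(\id\ot\omega_{\xi_V,\eta_V})(V)$ really belong to $\M(C_0(\QG))$ rather than only to $\mc B(L^2(\QG))$, which is standard from the representation-theoretic setup recalled in Section~\ref{not}, so I do not anticipate any serious obstacle.
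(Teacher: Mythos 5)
Your proposal is correct and follows exactly the route the paper intends: the paper's proof is the one-line remark that (1) is routine and (2) follows by testing on elementary tensors and using that $C_0(\QG)$ is an ideal in $\M(C_0(\QG))$, which is precisely the computation you carry out. The explicit coefficient formulae and the closing density argument are the right way to fill in the details.
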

\begin{proof}
(1) is routine. For (\ref{prop:easy_prop_mix:two}) test on elementary tensors, as $C_0(\G)$ is an ideal in $\M (C_0(\G))$.
\end{proof}

The next lemma, connecting the mixing property of a representation to the properties of a certain state, will be used in Section 5.

\begin{lem} \label{mixequiv}
Let $\mu$ be a state on $C_0^u(\hh\G)$, and let $x=(\id\otimes\mu)(\wW)
\in \M(C_0(\G))$.  Let $(\phi,\sH,\xi)$ be the GNS construction for $\mu$,
and let $U$ be the representation of $\G$ associated to $\phi:C_0^u(\hh\G) \to \mc B(\Hil)$.  Then $U$ is
mixing if and only if $x\in C_0(\G)$.
\end{lem}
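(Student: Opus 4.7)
The plan is to view $x = (\id \otimes \omega_{\xi, \xi})(U)$ as the diagonal matrix coefficient of $U$ at the GNS vector $\xi$, and then to propagate the $C_0$-property from this single coefficient to all coefficients via the corepresentation structure of $\wW$. The ``only if'' direction is immediate: since $\mu = \omega_{\xi, \xi} \circ \phi$ by the GNS construction, $x = (\id \otimes \mu)(\wW) = (\id \otimes \omega_{\xi, \xi})(U)$ is a matrix coefficient of $U$, so it lies in $C_0(\G)$ whenever $U$ is mixing.

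For the converse, I would assume $x \in C_0(\G)$ and aim to show $(\id \otimes \omega_{\eta_1, \eta_2})(U) \in C_0(\G)$ for all $\eta_1, \eta_2 \in \Hil$. By joint norm-continuity of the coefficient map $(\eta_1, \eta_2) \mapsto (\id \otimes \omega_{\eta_1, \eta_2})(U)$, the cyclicity of $\xi$ for $\phi(C_0^u(\hh\G))$, and the norm-density of $\{(\omega \otimes \id)(\wW) : \omega \in L^1(\G)\}$ in $C_0^u(\hh\G)$, it suffices to treat $\eta_1 = \phi(a)\xi$ and $\eta_2 = \phi(b)\xi$ with $a = (\omega_1 \otimes \id)(\wW)$ and $b^* = (\omega_2 \otimes \id)(\wW)$ for some $\omega_1, \omega_2 \in L^1(\G)$. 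A short rearrangement using $U = (\id \otimes \phi)(\wW)$ and $\mu = \omega_{\xi, \xi} \circ \phi$ rewrites this coefficient as $(\id \otimes \mu)\bigl((1 \otimes b^*)\wW(1 \otimes a)\bigr)$.

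The key computation exploits the corepresentation identity $(\Delta \otimes \id)(\wW) = \wW_{13}\wW_{23}$. Slicing $\omega_1$ on the middle leg and $\omega_2$ on the first leg yields, respectively,
\[ \wW(1 \otimes a) = (R_{\omega_1} \otimes \id)(\wW), \qquad (1 \otimes b^*)\wW = (L_{\omega_2} \otimes \id)(\wW), \]
where $L_\omega = (\omega \otimes \id)\Delta$ and $R_\omega = (\id \otimes \omega)\Delta$ are the standard convolution operators on $C_0(\G)$. Because $L_{\omega_2} \otimes \id$ acts trivially on the second tensor leg, it commutes with right multiplication by $1 \otimes a$, so the two identities combine to give $(1 \otimes b^*)\wW(1 \otimes a) = ((L_{\omega_2} \circ R_{\omega_1}) \otimes \id)(\wW)$. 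Applying $\id \otimes \mu$ then reduces our coefficient to $L_{\omega_2}(R_{\omega_1}(x))$, which lies in $C_0(\G)$ since the convolution operators preserve $C_0(\G)$ and $x \in C_0(\G)$ by hypothesis. The main technical point will be pinning down the precise form of the two slicing identities and verifying the commutativity claim; once those are in hand, the rest is routine.
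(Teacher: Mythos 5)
Your proposal is correct and follows essentially the same route as the paper: the identity $(1\otimes b^*)\wW(1\otimes a)=((L_{\omega_2}\circ R_{\omega_1})\otimes\id)(\wW)$ is exactly the paper's four-legged slice computation, which lands on $y=(\omega_2\otimes\id)\Delta\big((\id\otimes\omega_1)\Delta(x)\big)=L_{\omega_2}(R_{\omega_1}(x))$, just repackaged via the convolution operators. The one step you defer --- that $L_\omega$ and $R_\omega$ map $C_0(\G)$ into $C_0(\G)$ --- is indeed routine and is justified in the paper by $\Delta(d)(c\otimes 1)\in C_0(\G)\otimes C_0(\G)$ together with Cohen factorisation.
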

\begin{proof}
We have that $U = (\id\otimes\phi)(\wW)$.  If $U$ is mixing,
then $x = (\id\otimes\omega_{\xi,\xi}\circ\phi)(\wW)
= (\id\otimes\omega_{\xi,\xi})(U) \in C_0(\G)$.

Conversely, let $a,b\in C_0^u(\hh\G)$ and set $\alpha=\phi(a)\xi$ and $\beta=\phi(b)\xi$.  Suppose further that $a=(\omega_1\otimes\id)(\wW)$ and $b^*=(\omega_2\otimes\id)(\wW)$ for some $\omega_1,\omega_2\in L^1(\QG)$.
As $\wW$ is a representation of $\G$,
\begin{align*}
(1\otimes b^*)\wW(1\otimes a)
&= (\omega_1\otimes\omega_2\otimes\id\otimes\id)\big( \wW_{24}
   \wW_{34} \wW_{14} \big) \nonumber\\
&= (\omega_1\otimes\omega_2\otimes\id\otimes\id)\big(
   (1\otimes(\Delta\otimes\id)(\wW)) \wW_{14} \big) \nonumber\\
&= (\omega_1\otimes\omega_2\otimes\id\otimes\id)\big(
   (\id\otimes\Delta\otimes\id)(\wW_{23}\wW_{13}) \big) \nonumber\\
&= (\omega_1\otimes\omega_2\otimes\id\otimes\id)\big(
   (\id\otimes\Delta\otimes\id)(\Delta^\op\otimes\id)(\wW) \big).
\end{align*}
Here $\Delta^\op = \sigma\circ\Delta$ is the opposite coproduct.  Now set
$y=(\id\otimes\omega_{\alpha,\beta})(U)$, so that
\begin{align*}
y &= (\id\otimes (\phi(a) \omega_{\xi,\xi} \phi(b)^*)\circ\phi)(\wW)
= (\id\otimes\omega_{\xi,\xi}\circ\phi)
   \big( (1\otimes b^*)\wW(1\otimes a) \big) \nonumber\\
&= (\omega_1\otimes\omega_2\otimes\id\otimes\mu)\big(
   (\id\otimes\Delta\otimes\id)(\Delta^\op\otimes\id)(\wW) \big)\nonumber \\
&= (\omega_1\otimes\omega_2\otimes\id)\big(
   (\id\otimes\Delta)\Delta^\op(x) \big)
= (\omega_2\otimes\id)\Delta( (\id\otimes\omega_1)\Delta(x) ).
\end{align*}
Now, by \cite[Corollary~6.11]{KV} we have that $\Delta(d)(c\otimes 1) \in
C_0(\G)\otimes C_0(\G)$ for any $c,d\in C_0(\G)$.  By Cohen Factorisation (see for example \cite[Appendix~A]{mnw}), any $\omega\in L^1(\G)$
has the form $\omega = c\omega'$ for some $c\in C_0(\G), \omega'\in L^1(\G)$.
Thus $(\omega\otimes\id)\Delta(x) = (\omega'\otimes\id)\big( \Delta(x)(c\otimes 1) \big) \in C_0(\G)$.  Similarly, we can show that $(\id\otimes\omega)\Delta(d)\in C_0(\G)$ for any $\omega\in L^1(\G), d\in C_0(\G)$.
It follows that $y = (\omega_2\otimes\id)\Delta( (\id\otimes\omega_1)\Delta(x) ) \in C_0(\G)$.  As $a,b$ as above are dense in $C_0^u(\hh\G)$, and thus $\alpha,\beta$ as above are dense in $\sH$, we have shown that $U$ has $C_0$-coefficients, that is, $U$ is mixing.
\end{proof}

\section{Topologising representations of locally compact quantum groups}\label{sec:rep_met_sp}

Let $\QG$ denote a second countable locally compact quantum group.  In this section, we equip the set of all unitary representations of $\QG$ on a fixed infinite-dimensional, separable Hilbert space $\Hil$ with a natural Polish topology (for the analogous concepts in the classical framework, see the book \cite{Kechris}) and give conditions for density of classes of representations.

Fix  an infinite-dimensional separable Hilbert space $\sH$, and a unitary $u:\sH \rightarrow
\sH\otimes\sH$.  Let $\rep_\G(\sH)$ denote the collection of unitary representations
of $\G$ on $\sH$.  This is a monoidal category for the product
\begin{equation} U \boxtimes V = (1\otimes u^*)(U\tp V)(1\otimes u).
\end{equation}
Note the use of $u$ in this definition ensures that $U\boxtimes V$ is a representation on
$\sH$ and not $\sH\otimes\sH$.

When $\alg$ is a separable $C^*$-algebra the unitary group $\mathcal U(\M(\alg))$ of $\M(\alg)$ is Polish in the strict topology (see \cite[Page 191]{RW} for example). As multiplication is strictly continuous on bounded sets, $\rep_\G(\sH)$ is strictly closed in $\M(C_0(\G)\otimes\mc K(\Hil))$ and so is Polish in the relative strict topology.

Denote by $\rep(C_0^u(\hh\G),\sH)$ the set of non-degenerate $^*$-representations
of $C_0^u(\hh\G)$ on $\sH$.  The following proposition implies that $\rep_\G(\sH)$ is a \emph{topological $W^*$-category} in the sense of \cite{worcat}, equivalent to the $W^*$-category $\textup{Rep}(C_0^u(\hh\G,\sH))$.  Recall that on bounded sets the strong$^*$-topology on $\mathcal B(\sH)$ agrees with the strict topology (defined via $\mc B(\sH)\cong\M(\mc K(\sH))$).

\begin{prop}
Under the bijection between $\rep_\G(\sH)$ and $\rep(C_0^u(\hh\G),\sH)$, the topology induced on $\rep(C_0^u(\hh\G),\sH)$ is the point-strict topology (so $\phi_n \stackrel{n\to \infty}{\longrightarrow}\phi$ if and only if, for each $\widehat a \in C_0^u(\hh\G)$, we have that $\phi_n(\widehat a)\stackrel{n\to\infty}{\longrightarrow} \phi(\widehat a)$ strictly in $\mc B(\sH)=\M(\mc K(\sH))$).
\end{prop}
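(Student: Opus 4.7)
The plan is to check the equivalence by verifying convergence of nets in each direction, using properties of the semi-universal bicharacter $\wW$.

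For the forward direction, suppose $\phi_\alpha \to \phi$ point-strictly, and set $U_\alpha = (\id \otimes \phi_\alpha)(\wW)$. I would first show that for each $z \in C_0(\G) \otimes C_0^u(\hh\G)$ and each $k \in \mc K(\sH)$ one has $\|[(\id \otimes \phi_\alpha)(z) - (\id \otimes \phi)(z)](1 \otimes k)\| \to 0$. This is immediate for simple tensors $z = c \otimes a$ (left multiplication of a strictly convergent bounded net by a compact operator converges in norm) and extends to all $z$ by an $\epsilon/3$-argument using the contractivity of $(\id \otimes \phi_\alpha)$. Applying this with $z = \wW(c \otimes a) \in C_0(\G) \otimes C_0^u(\hh\G)$, and using that $\id \otimes \phi_\alpha$ extends multiplicatively to multipliers, gives $\|U_\alpha(c \otimes \phi_\alpha(a)k) - U(c \otimes \phi(a)k)\| \to 0$; absorbing the norm-vanishing term $\|(\phi_\alpha(a) - \phi(a))k\|$ into the contractive $U_\alpha$ yields $\|U_\alpha(c \otimes \phi(a)k) - U(c \otimes \phi(a)k)\| \to 0$. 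Since $\phi$ is non-degenerate, $\{\phi(a)k : a \in C_0^u(\hh\G),\, k \in \mc K(\sH)\}$ is norm-dense in $\mc K(\sH)$, so contractivity of the $U_\alpha$ and $U$ completes the left-multiplication half. The right-multiplication half follows by applying the same argument either to the adjoints $U_\alpha^* = (\id \otimes \phi_\alpha)(\wW^*)$ or symmetrically to $(c \otimes a)\wW$, establishing strict convergence $U_\alpha \to U$.

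For the converse, suppose $U_\alpha \to U$ strictly in $\M(C_0(\G) \otimes \mc K(\sH))$. Since $C_0(\G) \otimes \mc K(\sH)$ is non-degenerately represented on $L^2(\G) \otimes \sH$, strict convergence of the bounded net of unitaries $U_\alpha$ implies strong-${}^*$ convergence as operators on $L^2(\G) \otimes \sH$. By the universal property of $\wW$ (see \cite{ku}), elements of the form $a = (\omega_{\xi,\eta} \otimes \id)(\wW)$ with $\xi, \eta \in L^2(\G)$ are norm-dense in $C_0^u(\hh\G)$, and for such $a$ one has $\phi_\alpha(a) = (\omega_{\xi,\eta} \otimes \id)(U_\alpha)$. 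Hence for each $e \in \sH$,
\begin{equation}
\|(\phi_\alpha(a) - \phi(a))e\| = \|P_\eta(U_\alpha - U)(\xi \otimes e)\| \leq \|\eta\|\,\|(U_\alpha - U)(\xi \otimes e)\| \to 0,
\end{equation}
where $P_\eta\colon L^2(\G) \otimes \sH \to \sH$ is the contraction $\zeta \otimes e' \mapsto (\zeta|\eta) e'$. The analogous estimate with $U_\alpha^*$ gives $\phi_\alpha(a)^* e \to \phi(a)^* e$, so $\phi_\alpha(a) \to \phi(a)$ strong-${}^*$, which on bounded sets is exactly strict convergence in $\mc B(\sH) = \M(\mc K(\sH))$. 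Uniform contractivity of the $\phi_\alpha$ combined with the norm-density noted above extends this to all $a \in C_0^u(\hh\G)$.

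The main technical obstacle throughout is that $\wW$ is only a multiplier of $C_0(\G) \otimes C_0^u(\hh\G)$ and not an element of the $C^*$-algebra itself; this forces the use of the factorisation $\wW(c \otimes a) \in C_0(\G) \otimes C_0^u(\hh\G)$ together with non-degeneracy of $\phi$ in order to pass between strict convergence of multipliers and norm convergence after pairing with compact operators.
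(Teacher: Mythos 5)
Your argument is correct and follows essentially the same route as the paper: the forward direction is identical (expand $(\id\otimes\phi_\alpha)(\wW(c\otimes a))(1\otimes k)$ using that $\wW(c\otimes a)\in C_0(\G)\otimes C_0^u(\hh\G)$, then use non-degeneracy of $\phi$), and the converse recovers $\phi_\alpha$ on the dense set of slices of $\wW$ just as the paper does. The only variation is that in the converse you check strong-${}^*$ convergence on $\sH$ via vector functionals $\omega_{\xi,\eta}$ and invoke the agreement of the strong-${}^*$ and strict topologies on bounded sets, whereas the paper slices by functionals of the form $a\omega$ and uses Cohen factorisation to treat the two sides of the strict topology separately -- both are sound.
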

\begin{proof}
Let $(U_n)_{n=1}^{\infty}$ be a sequence in $\rep_\G(\sH)$ with the corresponding sequence
$(\phi_n)_{n=1}^{\infty}$ in $\rep(C_0^u(\hh\G),\sH)$; similarly let $U\in \rep_\G(\sH)$ and $\phi\in \rep(C_0^u(\hh\G),\sH)$ correspond.
Firstly, suppose that $\phi_n\stackrel{n\to \infty}{\longrightarrow}\phi$ in the point-strict topology.
Let $a\in C_0(\G),\widehat a\in C_0^u(\hh\G)$ and $\theta\in\mc K(\Hil)$, so that
\begin{align} U_n(a\otimes \phi_n(\widehat a)\theta) &=
(\id\otimes\phi_n)(\wW)(a\otimes\phi_n(\widehat a)\theta)
= (\id\otimes\phi_n)\big( \wW(a\otimes\widehat a) \big) (1\otimes\theta).
\end{align}
As $\wW \in \M(C_0(\G)\otimes C_0^u(\hh\G))$ it follows that
$\wW(a\otimes\widehat a) \in C_0(\G)\otimes C_0^u(\hh\G)$ and so
\begin{equation} \lim_{n\to \infty} \
(\id\otimes\phi_n)\big( \wW(a\otimes\widehat a) \big) (1\otimes\theta)
= (\id\otimes\phi)\big( \wW(a\otimes\widehat a) \big) (1\otimes\theta)
= U(a\otimes \phi(\widehat a)\theta). \end{equation}
Finally observe that $\phi_n(\widehat a)\theta \stackrel{n\to\infty}{\longrightarrow} \phi(\widehat a)\theta$
in norm, and so we may conclude that $U_n(a\otimes\phi(\widehat a)\theta)
\stackrel{n\to\infty}{\longrightarrow} U(a\otimes \phi(\widehat a)\theta)$.  Similarly, we can show that
$(a\otimes\phi(\widehat a)\theta)U_n \stackrel{n\to\infty}{\longrightarrow} (a\otimes \phi(\widehat a)\theta)U$.
As $\phi$ is non-degenerate, the collection of such $\phi(\widehat a)\theta$ forms
a linearly dense subspace of $\mc K(\Hil)$, and it follows that
$U_n\stackrel{n\to\infty}{\longrightarrow} U$ strictly, as required.

Conversely, suppose that $U_n\stackrel{n\to\infty}{\longrightarrow} U$ strictly.  Let $a\in C_0(\G),\omega
\in L^1(\G)$, and set $\widehat a = (a\omega\otimes\id)(\wW) \in C_0^u(\hh\G)$.
For $\theta\in\mc K(\Hil)$,
\begin{equation}\phi_n(\widehat a)\theta = (a\omega\otimes\id)(U_n)\theta
= (\omega\otimes\id)\big( U_n(a\otimes\theta) \big)
\stackrel{n\to\infty}{\longrightarrow} (\omega\otimes\id)\big( U(a\otimes\theta) \big)
= \phi(\widehat a)\theta.
\end{equation}
By Cohen-Factorisation, we can find $\omega',a'$ with $a\omega=\omega' a'$,
and so by repeating the argument on the other side, it follows that
$\phi_n(\widehat a) \stackrel{n\to\infty}{\longrightarrow}\phi(\widehat a)$ strictly.  As elements $\widehat a$ arising in this way are
dense in $C_0^u(\hh\G)$, it follows that $\phi_n\stackrel{n\to\infty}{\longrightarrow}\phi$ in the
point-strict topology, as required.
\end{proof}

As the multiplicity of representations does not play a role when weak containment is considered, and we want to consider the trivial representation as an element of $\rep_\G(\sH)$, we will use the notation $1$ now for the unitary representation $U=1\ot 1 \in \M(C_0(\G)\otimes\mc K(\Hil))$.

\begin{prop} \label{mix-dense imply HAP}
If the mixing representations are dense in $\rep_\G(\sH)$, then there is a mixing
representation $U \in \rep_\G(\sH)$ with $1 \wkcon U$ (that is, $U$ has almost invariant
vectors).
\end{prop}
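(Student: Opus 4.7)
My plan is to exploit density to approximate the trivial representation by mixing ones, and then amplify by taking an infinite direct sum so that the limiting behaviour is encoded in a single mixing representation on $\sH$.

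First, since $\rep_\G(\sH)$ is Polish and hence second countable, density provides a sequence $(U_n)_{n=1}^\infty$ of mixing representations with $U_n \to 1$ strictly in $\M(C_0(\G) \otimes \mc K(\sH))$, where $1$ stands for the trivial representation $1_{C_0(\G)} \otimes 1_\sH$. Under the bijection established in the previous proposition this amounts to point-strict convergence $\phi_{U_n} \to \phi_1$ in $\rep(C_0^u(\hh\G),\sH)$, where $\phi_1(\hh a) = \hh\epsilon_u(\hh a) 1_\sH$ (as one reads off from $(\id\otimes\hh\epsilon_u)(\wW)=1$). Since on norm-bounded sets the strict topology on $\mc B(\sH)=\M(\mc K(\sH))$ agrees with the $\ast$-strong topology, I obtain that
\begin{equation*}
\|\phi_{U_n}(\hh a)\xi - \hh\epsilon_u(\hh a)\xi\| \longrightarrow 0 \quad \text{as } n \to \infty
\end{equation*}
for every $\hh a \in C_0^u(\hh\G)$ and every $\xi \in \sH$.

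Next I form the direct sum $U := \bigoplus_{n=1}^\infty U_n$, defined analogously to the binary construction of Subsection~\ref{unitreplcqg} and realised on $\sH$ via a fixed unitary identification $\sH \cong \bigoplus_{n=1}^\infty \sH$. This $U$ is a unitary corepresentation of $\G$ on $\sH$, and I claim it is mixing. Indeed, for $\xi = (\xi_n), \eta = (\eta_n) \in \bigoplus_n \sH$,
\begin{equation*}
(\id\otimes\omega_{\xi,\eta})(U) = \sum_{n=1}^\infty (\id\otimes\omega_{\xi_n,\eta_n})(U_n),
\end{equation*}
each summand lying in $C_0(\G)$ by the mixing of $U_n$, and the series being absolutely norm convergent by Cauchy--Schwarz, since $\sum_n \|\xi_n\|\|\eta_n\| \le \|\xi\|\|\eta\| < \infty$. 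Hence $(\id\otimes\omega_{\xi,\eta})(U) \in C_0(\G)$.

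Finally, to produce almost invariant vectors, I fix any unit vector $\xi \in \sH$ and let $\xi_n \in \bigoplus_n \sH$ be its image in the $n$-th summand; then each $\xi_n$ is a unit vector, and $\phi_U(\hh a)\xi_n$ lies in the $n$-th summand and coincides there with $\phi_{U_n}(\hh a)\xi$, so
\begin{equation*}
\|\phi_U(\hh a)\xi_n - \hh\epsilon_u(\hh a)\xi_n\| = \|\phi_{U_n}(\hh a)\xi - \hh\epsilon_u(\hh a)\xi\| \longrightarrow 0
\end{equation*}
for each $\hh a \in C_0^u(\hh\G)$. By the equivalence of conditions (\ref{prop:inv_vects:two}) and (\ref{prop:inv_vects:one}) in Proposition~\ref{prop:inv_vects}, the sequence $(\xi_n)$ witnesses that $U$ admits almost invariant vectors, so $1 \wkcon U$ by Corollary~\ref{prop:inv_vects1}. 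I do not anticipate a serious obstacle; the only subtlety is handling the countable direct sum cleanly -- namely preserving mixing via the norm-absolute-convergence argument above, and converting pointwise convergence of $\phi_{U_n}(\hh a)\xi$ at a single fixed $\xi$ into near-invariance along the diagonal sequence $(\xi_n)$ inside $\bigoplus_n \sH$.
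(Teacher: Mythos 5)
Your proof is correct and follows essentially the same route as the paper: pass to a sequence $U_n\to 1$, form the countable direct sum realised on $\sH$, and observe that the vectors sitting in the $n$-th summand become almost invariant; the only cosmetic difference is that you verify condition (4) of Proposition~\ref{prop:inv_vects} via the representations $\phi_{U_n}$, while the paper verifies condition (1) directly from strict (hence strong) convergence of $U_n(\eta\otimes\xi_0)$. Your explicit Cauchy--Schwarz argument for mixing of the direct sum is a slightly more detailed version of the paper's remark that coefficients of $U$ are norm limits of sums of coefficients of the $U_n$.
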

\begin{proof}
 By assumption, there is a sequence
$(U_n)_{n=1}^{\infty}$ of mixing representations such that $U_n\stackrel{n\to\infty}{\longrightarrow} 1$.  Fix a unit vector $\xi_0\in\sH$. Consider $U = \bigoplus_{n\in \bn} U_n$, which is a mixing representation on $\bigoplus_{n\in\bn}\sH\cong\sH\otimes\ell^2(\bn)$ (as coefficients of
$U$ will be norm limits of sums of coefficients of the representations $U_n$, and so will still be members of $C_0(\G)$).

Fix a unitary
$v:\sH \rightarrow \sH \otimes \ell^2(\mathbb N)$, and define
\begin{equation} U = \sum_{n\in \bn} (1\otimes v^*)\big( U_n \otimes \theta_{\delta_n,\delta_n} \big)
(1\otimes v)\in\rep_\G(\sH),
\end{equation}
where $\theta_{\delta_n,\delta_n}$ is the rank-one orthogonal
projection onto the span of $\delta_n\in\ell^2(\bn)$. Let $\xi_n = v^*(\xi_0\otimes\delta_n)$ for each $n\in \bn$.  Then, for
$\eta\in L^2(\G)$,
\begin{align} \| U(\eta\otimes\xi_n) - \eta\otimes\xi_n \|
&= \| (1\otimes v^*)(U_n(\eta\otimes\xi_0) \otimes \delta_n)
- (1\otimes v^*)(\eta\otimes\xi_0 \otimes \delta_n) \| \nonumber\\
&= \| U_n(\eta\otimes\xi_0) - \eta\otimes\xi_0 \|.
\end{align}
Now, strict convergence in $\M(C_0(\G)\otimes\mc K(\sH))$ implies
strong convergence in $\mc B(L^2(\G)\otimes\sH)$, and so
$U_n(\eta\otimes\xi_0)$ converges in norm to $\eta\otimes\xi_0$.
It follows that we have verified condition (1) of Proposition~\ref{prop:inv_vects}
for the sequence $(\xi_n)$.  Hence $U$ has almost invariant vectors.
\end{proof}

The following lemma abstracts calculations used in the classical situation for establishing density of mixing representations in \cite{BR} and weak mixing representations in \cite{KerrPichot}.

\begin{lem}\label{lem:den_cond}
Let $\mc R \subseteq \rep_\G(\sH)$ be a collection which is:
\begin{enumerate}
\item\label{lem:den_cond:one} \emph{stable under unitary equivalence}: i.e.
for a unitary $v$ on $\sH$, we have that $(1\otimes v^*)U(1\otimes v)
\in \mc R$ if and only if $U\in\mc R$;
\item\label{lem:den_cond:two} \emph{stable under tensoring with another representation}: i.e.
if $U\in\mc R, V\in\rep_\G(\sH)$ then $U\boxtimes V\in\mc R$;
\item\label{lem:den_cond:three}\emph{contains a representation with almost invariant vectors}: i.e. by Corollary \ref{prop:inv_vects1} there is $U^{(0)}\in\mc R$ with $1\wkcon U^{(0)}$.
\end{enumerate}
Then $\mc R$ is dense in $\rep_\G(\sH)$.
\end{lem}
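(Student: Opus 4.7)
The plan is to show any $V\in\rep_\G(\sH)$ lies in the strict closure of $\mc R$. Hypothesis (\ref{lem:den_cond:three}) together with Corollary~\ref{prop:inv_vects1} supplies $U^{(0)}\in\mc R$ with a sequence $(\xi_n)_{n\in\bn}$ of almost invariant unit vectors (a sequence suffices by second countability). Hypotheses (\ref{lem:den_cond:two}) and (\ref{lem:den_cond:one}), after absorbing the fixed unitary $u$ used in the definition of $\boxtimes$ into the free unitary conjugation, yield that every
\begin{equation*}
Y_w:=(1\otimes w^*)(U^{(0)}\tp V)(1\otimes w),\qquad w:\sH\to\sH\otimes\sH\text{ unitary},
\end{equation*}
belongs to $\mc R$. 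The task becomes to select $w_n$ so that $Y_{w_n}\to V$ strictly.

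The strict topology on $\rep_\G(\sH)$, viewed via the bijection with non-degenerate $^*$-representations $\phi_U:C_0^u(\hh\G)\to\mc B(\sH)$ established earlier, coincides with the point-strict topology, which on bounded sets reduces to pointwise norm convergence $\phi_{U_n}(\hat a)\zeta\to\phi_U(\hat a)\zeta$ for each $\hat a\in C_0^u(\hh\G)$ and $\zeta\in\sH$. Since $Y_w$ corresponds to $\phi_{Y_w}(\hat a)=w^*\phi_{U^{(0)}\tp V}(\hat a)w$ with $\phi_{U^{(0)}\tp V}(\hat a)=(\phi_{U^{(0)}}\otimes\phi_V)\bigl(\hh\Delta_u(\hat a)\bigr)$, applying this to a vector $\xi_n\otimes\zeta$ and using Proposition~\ref{prop:inv_vects} (almost invariance entails $\phi_{U^{(0)}}(\hat b)\xi_n\to\hh\epsilon_u(\hat b)\xi_n$ in norm) together with the counit identity $(\hh\epsilon_u\otimes\id)\hh\Delta_u=\id$, one obtains
\begin{equation*}
\bigl\|\phi_{U^{(0)}\tp V}(\hat a)(\xi_n\otimes\zeta)-\xi_n\otimes\phi_V(\hat a)\zeta\bigr\|\ \to\ 0\qquad(n\to\infty),
\end{equation*}
for each $\hat a$ and $\zeta$. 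The verification invokes a norm approximation of $\hh\Delta_u(\hat a)$ by a finite tensor sum so that almost invariance need only be applied to finitely many legs uniformly. Thus conjugation of $U^{(0)}\tp V$ by the natural isometry $i_n:\zeta\mapsto\xi_n\otimes\zeta$, if it were a unitary, would already recover $\phi_V$ asymptotically.

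The main obstacle, that $i_n$ is not surjective onto $\sH\otimes\sH$, is overcome by a diagonal construction. By metrisability of $\rep_\G(\sH)$ (a consequence of second countability), strict convergence $Y_{w_n}\to V$ reduces to norm convergence on countable dense subsets $\{\hat a_j\}\subseteq C_0^u(\hh\G)$ and $\{\zeta_k\}\subseteq\sH$. Choose an increasing chain of finite-dimensional subspaces $K_1\subseteq K_2\subseteq\cdots$ of $\sH$ so that $K_n$ contains $1/n$-approximations of the vectors $\zeta_k$ and $\phi_V(\hat a_j)\zeta_k$ for all $j,k\leq n$. Define $w_n$ to agree with $i_n$ on $K_n$, so $w_n(\zeta)=\xi_n\otimes\zeta$ for $\zeta\in K_n$, and extend to a unitary $\sH\to\sH\otimes\sH$ by any unitary from $K_n^\perp$ onto $(\xi_n\otimes K_n)^\perp$, both of which are separable and infinite-dimensional. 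Then $w_n^*(\xi_n\otimes\eta)$ lies within $2/n$ of $\eta$ whenever $\eta$ is within $1/n$ of $K_n$; in particular, for $j,k\leq n$ the vectors $\zeta_k$ and $\phi_V(\hat a_j)\zeta_k$ satisfy this, so the identity $\phi_{Y_{w_n}}(\hat a_j)\zeta_k=w_n^*\phi_{U^{(0)}\tp V}(\hat a_j)w_n\zeta_k$ combined with the asymptotic above yields $\phi_{Y_{w_n}}(\hat a_j)\zeta_k\to\phi_V(\hat a_j)\zeta_k$ in norm. This establishes $Y_{w_n}\to V$ strictly and completes the proof.
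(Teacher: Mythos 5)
Your argument follows the same core strategy as the paper's proof: tensor $V$ with the representation $U^{(0)}$ carrying almost invariant vectors, conjugate by unitaries $w\colon\sH\to\sH\otimes\sH$ that act as $\zeta\mapsto\xi\otimes\zeta$ on a suitable finite-dimensional subspace, and use hypotheses (\ref{lem:den_cond:two}) and (\ref{lem:den_cond:one}) to place the result in $\mc R$; your reduction of $Y_w\in\mc R$ and your construction of the unitaries $w_n$ are exactly the paper's. The only real difference is the picture in which the estimate is carried out: the paper works in the Hilbert module $C_0(\G)\otimes\sH$, using the form $\|\mc U^{(0)}(a\otimes\xi_\alpha)-a\otimes\xi_\alpha\|\to0$ of almost invariance and expanding $\mc V(a\otimes\eta)$ and $\mc V^*(a\otimes\eta)$ in an orthonormal basis of $\sH$, whereas you work on the $C_0^u(\hh\G)$ side via $\phi_{U^{(0)}\tp V}=(\phi_{U^{(0)}}\otimes\phi_V)\circ\hh\Delta_u$ and the counit identity.

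There is one step that fails as literally written: $\hh\Delta_u(\hat a)$ lies only in $\M(C_0^u(\hh\G)\otimes C_0^u(\hh\G))$ and in general cannot be norm-approximated by finite sums of elementary tensors (already for $\G=\hh\Gamma$ with $\Gamma$ an infinite discrete group one has $\hh\Delta(\delta_g)=\sum_h\delta_h\otimes\delta_{h^{-1}g}$, which does not lie in $c_0(\Gamma)\otimes c_0(\Gamma)$). The repair is standard: using non-degeneracy of $\phi_V$ and Cohen factorisation, replace $\zeta$ by $\phi_V(\hat c)\zeta'$ and approximate instead $\hh\Delta_u(\hat a)(1\otimes\hat c)$, which does belong to $C_0^u(\hh\G)\otimes C_0^u(\hh\G)$; slicing the first leg with $\hh\epsilon_u$ returns $\hat a\hat c$, and your asymptotic
\begin{equation*}
\bigl\|\phi_{U^{(0)}\tp V}(\hat a)(\xi_n\otimes\zeta)-\xi_n\otimes\phi_V(\hat a)\zeta\bigr\|\longrightarrow 0
\end{equation*}
follows. (The paper's choice of the module picture sidesteps this issue entirely, since the expansion $\mc V(a\otimes\eta)=\sum_n x_n\otimes e_n$ converges in the module norm automatically and can be truncated.) With this repair, and taking your countable dense set $\{\hat a_j\}$ to be closed under adjoints so that the point-strict (i.e.\ strong-$*$ on bounded sets) convergence is genuinely verified, your proof is complete.
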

\begin{proof}
Assume that $\mc R$ is such a collection and fix $U^{(0)}\in\mc R$ with $1\wkcon U^{(0)}$.  We will use the isomorphism $\M(C_0(\G)\otimes\mc K(\Hil))
\cong \mc L(C_0(\G)\otimes\sH)$, see (\ref{eq:mattthree}).  As $U$ has almost invariant vectors, we can find a net $(\xi_\alpha)$ of unit vectors in $\Hil$  with
$\| \mc U^{(0)}(a\otimes\xi_\alpha) - a\otimes\xi_\alpha \|\rightarrow 0$
for $a\in C_0(\G)$.

Fix $V\in\rep_\G(\sH)$. We will show that $V$ can be approximated by a sequence of elements of $\mc R$.
Let $a\in C_0(\G)$, let $\eta\in\sH$ be a unit vector, and
let $\epsilon>0$.  Let $(e_n)_{n=1}^{\infty}$ be an orthonormal basis for $\sH$,
and let
\begin{equation}\mc V(a\otimes\eta) = \sum_{n=1}^{\infty} x_{n} \otimes e_n,
\qquad \mc V^*(a\otimes\eta) = \sum_{n=1}^{\infty} y_{n} \otimes e_n.
\end{equation}
for some $x_n,y_n\in C_0(\G)$, with convergence in the Hilbert module $C_0(\G)\otimes\sH$.  Choose $N\in \bn$ so that
\begin{equation} \Big\| \sum_{n>N} x_{n} \otimes e_n \Big\|
= \Big\| \sum_{n>N} x_{n}^*x_{n} \Big\|^{1/2} < \epsilon/3,
\qquad \Big\| \sum_{n>N} y_{n} \otimes e_n \Big\| < \epsilon/3.
\end{equation}
Further choose $\alpha$ so that for all $n\leq N$
\begin{equation} \big\| \mc U^{(0)}(x_{n}\otimes\xi_\alpha) -
x_{n}\otimes\xi_\alpha \big\| < \epsilon/3N, \qquad
\big\| \mc U^{(0)}(a\otimes\xi_\alpha) - a\otimes\xi_\alpha \big\|
< \epsilon/3.
\end{equation}
Finally, set $X = \{ \eta\} \cup \{ e_n : n\leq N \}$,
a finite subset of $\sH$.  As $\sH$ is infinite-dimensional, we can
find a unitary $v:\sH\rightarrow\sH\otimes\sH$ with $v(\xi) = \xi_\alpha \otimes \xi$ for all $\xi\in X$.  Then
\begin{align}
\big\| \big( (1\otimes v^*) (\mc U^{(0)} \tp \mc V) & (1\otimes v) -
  \mc V \big) (a\otimes\eta) \big\|
= \big\| \mc U^{(0)}_{12} \mc V_{13} (a\otimes\xi_\alpha\otimes\eta)
- (1\otimes v)\mc V(a\otimes\eta) \big\| \nonumber\\
&= \big\| \mc U^{(0)}_{12} \sum_{n=1}^{\infty} x_{n} \otimes \xi_\alpha\otimes e_n
- \sum_{n=1}^{\infty} x_{n} \otimes v(e_n) \big\| \nonumber\\
&\leq 2\epsilon/3 +
\big\| \mc U^{(0)}_{12} \sum_{n\leq N} x_{n} \otimes \xi_\alpha\otimes e_n
- \sum_{n\leq N} x_{n} \otimes v(e_n) \big\|\nonumber \\
&= 2\epsilon/3 +
\big\| \sum_{n\leq N} \big(\mc U^{(0)}(x_{n} \otimes \xi_\alpha)
- x_{n}\otimes\xi_\alpha \big) \otimes e_n \big\|\nonumber \\
&\leq \epsilon.
\end{align}
Similarly,
\begin{align}
\big\| \big( (1\otimes v^*) (\mc U^{(0)} \tp \mc V)^* & (1\otimes v) -
  \mc V^* \big) (a\otimes\eta) \big\|
= \big\| \mc V_{13}^* \mc U^{(0)}_{12}{}^* (a\otimes\xi_\alpha\otimes\eta)
  - (1\otimes v)\mc V^*(a\otimes\eta) \big\| \nonumber\\
&\leq \epsilon/3 +
  \big\| \mc V_{13}^* (a\otimes\xi_\alpha\otimes\eta)
  - (1\otimes v)\mc V^*(a\otimes\eta) \big\| \nonumber\\
&= \epsilon/3 +
  \Big\| \sum_{n\in \bn} y_{n} \otimes \xi_\alpha \otimes e_n
  - y_{n} \otimes v(e_n) \Big\| \nonumber\\
&= \epsilon/3 +
  \Big\| \sum_{n>N} y_{n} \otimes \big( \xi_\alpha \otimes e_n
  - v(e_n) \big) \Big\| \nonumber\\
&\leq \epsilon/3 +
  \Big\| \sum_{n>N} y_{n} \otimes \xi_\alpha \otimes e_n \Big\|
  + \Big\| \sum_{n>N} y_{n} \otimes v(e_n) \Big\| < \epsilon.
\end{align}

The intertwiner $v$ is not equal to our fixed intertwiner $u$, so
$(1\otimes v^*) (U^{(0)} \tp V) (1\otimes v)$ need not be
equal to $U\boxtimes V$, but it is unitarily equivalent to it.  By
(\ref{lem:den_cond:one}) and (\ref{lem:den_cond:two}), it follows that
$(1\otimes v^*) (U^{(0)} \tp V) (1\otimes v) \in \mc R$. In this way we can construct a net $(V_i)_{i\in \Ind}$ in $\mc R$ such that
\begin{equation} \mc V_i(a\otimes\eta) \stackrel{i \in \Ind}{\longrightarrow} \mc V(a\otimes\eta),
\quad \mc V_i^*(a\otimes\eta) \stackrel{i \in \Ind}{\longrightarrow} \mc V^*(a\otimes\eta)
\end{equation}
for all $a\in C_0(\G), \eta\in\Hil$.  As $(\mc V_i-\mc V)_{i\in \Ind}$ is a bounded
net and we are dealing with linear maps, this is enough to show that $\mc V_i\rightarrow\mc V$ strictly, as
required.
\end{proof}

\section{The Haagerup approximation property}\label{sec:haaprop}

In this section we introduce the notion of the Haagerup property for locally compact quantum groups and provide several equivalent characterisations.

\begin{deft}
A locally compact quantum group $\QG$ has the \emph{Haagerup property}  if there exists a mixing representation of $\QG$ which has almost invariant vectors.
\end{deft}

The following statements are an immediate consequence of the definition. Recall that a locally compact quantum group \emph{has Property (T)} if each of its representations with almost invariant vectors has a nontrivial invariant vector -- this notion was introduced (for discrete quantum groups) in \cite{PierreT} and later studied in \cite{ks}.

\begin{prop} \label{coamenlcqg}
If $\widehat{\QG}$ is coamenable, then $\QG$ has the Haagerup property. Further, $\QG$ is compact if and only if $\QG$ has both the Haagerup property and Property (T).
\end{prop}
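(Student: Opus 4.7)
For the first assertion, the plan is to take the fundamental multiplicative unitary $W\in\M(C_0(\QG)\otimes C_0(\widehat{\QG}))$, viewed as a representation of $\QG$ on $L^2(\QG)$, and verify that it is simultaneously mixing and has almost invariant vectors under coamenability of $\widehat{\QG}$. I would first check that $W$ is always mixing, as coefficients $(\id\otimes\omega_{\xi,\eta})(W)$ for $\xi,\eta\in L^2(\QG)$ are well known to lie in $C_0(\QG)$. Since $W=(\id\otimes\Lambda_{\widehat{\QG}})(\wW)$, the $*$-homomorphism $\phi_W:C_0^u(\widehat{\QG})\to\mathcal{B}(L^2(\QG))$ implementing $W$ is precisely the reducing morphism $\Lambda_{\widehat{\QG}}$. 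By Proposition~\ref{prop:inv_vects}(iii), the existence of almost invariant vectors for $W$ then reduces to showing that $\widehat{\epsilon}_u$ is a weak$^*$-limit of vector states on $C_0(\widehat{\QG})$, which is one of the standard equivalent formulations of coamenability of $\widehat{\QG}$ (equivalent to $\Lambda_{\widehat{\QG}}$ being an isomorphism).

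For the second assertion, I would first dispatch the easier direction that compactness implies both the Haagerup property and Property (T). When $\QG$ is compact, $C_0(\QG)=C(\QG)$ is unital and the trivial representation $1\otimes 1$ is trivially mixing (its coefficient is $1\in C_0(\QG)$) and admits an invariant vector, giving the Haagerup property. For Property (T) I would invoke the standard Woronowicz averaging argument: for a representation $U$ on $\Hil$ with almost invariant vectors $(\xi_\alpha)$ and Haar state $h$, the slice $P:=(h\otimes\id)(U)\in\mathcal{B}(\Hil)$ is the orthogonal projection onto the $U$-invariant subspace, and Proposition~\ref{prop:inv_vects}(ii) combined with normality of $h$ gives $\langle P\xi_\alpha\mid\xi_\alpha\rangle=h((\id\otimes\omega_{\xi_\alpha})(U))\to h(1)=1$, so $P$ is nonzero and $U$ has a nonzero invariant vector.

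For the interesting converse, assuming $\QG$ has both the Haagerup property and Property (T), I would pick a mixing representation $U$ of $\QG$ with almost invariant vectors (by Haagerup), and then invoke Property (T) to upgrade these to a genuine nonzero invariant vector $\xi\in\Hil_U$. Applying Proposition~\ref{prop:wheninv}(ii) with $\eta=\xi$ yields $(\id\otimes\omega_{\xi,\xi})(U)=\|\xi\|^2\cdot 1_{\M(C_0(\QG))}$, while mixing of $U$ forces the same coefficient to lie in $C_0(\QG)$. Hence $1\in C_0(\QG)$, so $C_0(\QG)$ is unital and $\QG$ is compact. The only step requiring genuine care is matching coamenability with ``almost invariant vectors for $W$'' in the first part, but this is a routine translation via Proposition~\ref{prop:inv_vects} and the identification $\phi_W=\Lambda_{\widehat{\QG}}$; everything else is a direct application of Propositions~\ref{prop:wheninv} and \ref{prop:inv_vects}.
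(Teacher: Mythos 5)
Your proposal is correct and follows essentially the same route as the paper: the regular representation $W$ witnesses the first claim, and the ``Haagerup $+$ (T) $\Rightarrow$ compact'' direction is verbatim the paper's argument (invariant vector gives a coefficient equal to a nonzero multiple of $1$, which mixing forces into $C_0(\QG)$). The only differences are cosmetic: where the paper cites \cite[Theorem~3.1]{BT} for the equivalence of coamenability with almost invariant vectors for $W$, and \cite[Theorem~7.16]{BCT} for ``compact $\Rightarrow$ (T)'', you supply short direct arguments (the identification $\phi_W=\Lambda_{\widehat{\QG}}$, and the averaging projection $(h\otimes\id)(U)$), and you get ``compact $\Rightarrow$ Haagerup'' from the trivial representation rather than from coamenability of the discrete dual --- all of which are valid.
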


\begin{proof}
It is easy to see that the left regular representation of $\QG$ (given by the fundamental unitary $W\in \M(C_0(\QG) \ot C_0(\hh \QG))$) is mixing.  By
\cite[Theorem~3.1]{BT}, $W$ has almost invariant vectors if and only if $\hh\QG$ is coamenable.

If $\QG$ has both $(T)$ and the Haagerup property, then it has a mixing representation with a non-trivial invariant vector. However, then the corresponding coefficient is a non-zero scalar multiple of unit in $\M (C_0(\QG))$, which belongs to $C_0(\QG)$. Thus $\QG$ is compact.

On the other hand, if $\QG$ is compact, then each representation which has almost invariant vectors actually has invariant vectors, so $\QG$ has Property (T) (see Theorem 7.16 of \cite{BCT}).  As $\hh\QG$ is discrete, it is  coamenable (\cite[Proposition 5.1]{BMT}), so $\QG$ has the Haagerup property.
\end{proof}

\begin{rem}\label{rem:amen_not_enough}
Note that we do not know whether every amenable locally compact quantum group has the Haagerup property (although this is true for discrete quantum groups, see Proposition \ref{amenHAP} below). Formally, providing the answer to this question should be easier than deciding the equivalence of amenability of $\QG$ and coamenability of $\widehat{\QG}$ (a well-known open problem) but they appear to be closely related.
\end{rem}

The above proposition allows us to provide the first examples of non-discrete locally compact quantum groups with the Haagerup property.

\begin{exam}
The locally compact quantum groups quantum $E_\mu(2)$ (\cite{worE2}), its dual $\hat{E}_{\mu}(2)$ (\cite{worE2} and \cite{E(2)dual}),
quantum \hbox{$az+b$} (\cite{worAzb}) and  quantum \hbox{$ax+b$} (\cite{worZak})
have the Haagerup property. Indeed, they are all coamenable, as follows for example from Theorem 3.14 of \cite{SS} (that result does not mention $\hat{E}_{\mu}(2)$, but the information contained in Section 1 of \cite{E(2)dual} suffices to construct a bounded counit on the $C^*$-algebra $C_0(\hat{E}_{\mu}(2))$ and this in turn implies coamenability, see Theorem 3.1 in \cite{BT}), and the two last examples are self-dual, up to `reversing the group operation', i.e.\ flipping the legs of the coproduct  (see the original papers or \cite{PuszSoltan}).
\end{exam}

For part (\ref{equivHAP:four}) of the following, we recall from
Section~\ref{sec:mults} that if $L$ is a completely positive multiplier of
$L^1(\hh\G)$ then there is a ``representing element'' $a\in \M (C_0(\G))$
such that $a\hh\lambda(\hh\omega) = \hh\lambda(L(\hh\omega))$ for all
$\hh\omega\in L^1(\hh\G)$.

\begin{tw}\label{equivHAP}
Let $\QG$  be a locally compact quantum group. The following conditions are equivalent:
\begin{rlist}
\item $\QG$ has the Haagerup property;\label{equivHAP:one}
\item the mixing representations form a dense  subset of $\RepGH$;
\item\label{equivHAP:three} there exists a net of states $(\mu_i)_{i\in \mathcal{I}}$ on $C_0^u(\widehat{\QG})$ such that the net $((\id \ot \mu_i)(\wW))_{i\in \mathcal{I}}$  is an approximate identity in $C_0(\QG)$;
\item\label{equivHAP:four}
there is a net $(a_i)_{i\in \mathcal{I}}$ in $C_0(\G)$ of representing elements of
completely positive multipliers which forms an approximate identity for
$C_0(\G)$.
\end{rlist}
\end{tw}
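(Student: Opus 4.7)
My plan is to close the cycle of implications by establishing (ii) $\Leftrightarrow$ (i), (i) $\Leftrightarrow$ (iii), and (iii) $\Leftrightarrow$ (iv). The implication (ii) $\Rightarrow$ (i) is Proposition~\ref{mix-dense imply HAP}, while (i) $\Rightarrow$ (ii) follows by applying Lemma~\ref{lem:den_cond} to the class $\mc R\subset\rep_{\G}(\sH)$ of mixing representations on the fixed separable infinite-dimensional $\sH$: stability under unitary conjugation is immediate, stability under $\boxtimes$-products is Proposition~\ref{prop:easy_prop_mix}, and (after amplifying a mixing representation arising from (i) by $\ell^{2}(\mathbb N)$ and transporting along a unitary) $\mc R$ contains a representation with almost-invariant vectors.

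For (i) $\Rightarrow$ (iii), given a mixing representation $U$ with almost-invariant net $(\xi_i)_{i\in\mc I}$, set $\mu_i := \omega_{\xi_i}\circ\phi_U \in C_0^u(\hh\G)^{*}$ (a state) and $a_i := (\id\otimes\mu_i)(\wW) = (\id\otimes\omega_{\xi_i})(U)$, which lies in $C_0(\G)$ because $U$ is mixing. The substantive point is to upgrade almost invariance to the approximate-identity property of $(a_i)$. Unfolding the Hilbert $C^{*}$-module description of $\mc U$ from \eqref{eq:mattthree} and using that $\mc U$ is a module unitary, a direct computation yields
\begin{equation}
\|\mc U(c\otimes\xi_i) - c\otimes\xi_i\|^{2} = \|c^{*}(2 - a_i - a_i^{*})c\|,\qquad c\in C_0(\G),
\end{equation}
and Proposition~\ref{prop:inv_vects} makes the left-hand side tend to zero. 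Since $\|a_i\|\leq 1$, the operator inequality
\begin{equation}
(1-a_i)^{*}(1-a_i) = 1 - a_i - a_i^{*} + a_i^{*}a_i \leq 2 - a_i - a_i^{*}
\end{equation}
forces $\|(1-a_i)c\|^{2}\leq\|c^{*}(2-a_i-a_i^{*})c\|\to 0$, so $a_ic\to c$ in norm; applying the same estimate after replacing $c$ by $c^{*}$ and taking adjoints yields $ca_i\to c$.

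For (iii) $\Rightarrow$ (i), form GNS triples $(\phi_i,\mathsf H_i,\xi_i)$ of the states $\mu_i$, and set $U_i := (\id\otimes\phi_i)(\wW)$, a representation of $\G$, so that $a_i = (\id\otimes\omega_{\xi_i})(U_i) \in C_0(\G)$. Lemma~\ref{mixequiv} makes each $U_i$ mixing, and hence so is the direct sum $U := \bigoplus_i U_i$ by Proposition~\ref{prop:easy_prop_mix}. To see that the $\xi_i$ (viewed in the $i$-th summand of $U$) are almost invariant, I verify the weak$^{*}$-convergence $\mu_i\to\hh\epsilon_u$ in $C_0^u(\hh\G)^{*}$, which by Proposition~\ref{prop:inv_vects} is equivalent to almost invariance for $U_i$. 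The strict convergence $a_i\to 1$ combined with Cohen factorisation gives $\omega(a_i)\to\omega(1)$ for every $\omega\in L^{1}(\G)$; rewriting $\omega(a_i) = \mu_i((\omega\otimes\id)(\wW))$ and invoking the standard facts $(\id\otimes\hh\epsilon_u)(\wW) = 1$ and density of $\{(\omega\otimes\id)(\wW) : \omega\in L^{1}(\G)\}$ in $C_0^u(\hh\G)$, together with the uniform bound $\|\mu_i\| = 1$, delivers the required weak$^{*}$-convergence.

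Finally, (iii) $\Leftrightarrow$ (iv) is routine via Section~\ref{sec:mults}: the UCP multiplier induced by the state $\mu_i$ has representing element $((\id\otimes\mu_i)(\wW))^{*} = a_i^{*}$, and adjunction preserves the approximate-identity property, giving (iii) $\Rightarrow$ (iv); the converse, obtained via \cite[Theorem 5.1]{daws}, requires a brief normalisation of the positive functionals furnished by that result to states, using that the approximate-identity property forces $\mu_i(1)\to 1$. I expect the main technical obstacle to be the $(i)\Rightarrow(iii)$ step: almost invariance directly controls only the real part of $1-a_i$ after two-sided compression by $C_0(\G)$, and upgrading this to one-sided norm convergence $a_ic\to c$ requires the Schwarz-type domination $|1-a_i|^{2}\leq 2\Re(1-a_i)$, which relies on the contractivity $\|a_i\|\leq 1$ — automatic here because $a_i$ is a matrix coefficient of a unitary representation against a unit vector.
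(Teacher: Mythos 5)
Your overall architecture matches the paper's: (i)$\Leftrightarrow$(ii) via Lemma~\ref{lem:den_cond} and Proposition~\ref{mix-dense imply HAP}, (iii)$\Rightarrow$(i) via GNS constructions, Lemma~\ref{mixequiv} and a direct sum (you verify condition (3) of Proposition~\ref{prop:inv_vects} where the paper verifies condition (5); both work), and (iii)$\Leftrightarrow$(iv) via Section~\ref{sec:mults}. The genuine divergence is in (i)$\Rightarrow$(iii): the paper extracts only \emph{weak} convergence $x_ia\to a$, $ax_i\to a$ from almost invariance and then passes to convex combinations of the states (Hahn--Banach/Mazur, noting convex combinations of states are still states) to upgrade to a norm approximate identity, whereas you get norm convergence directly from $\|\mc U(c\otimes\xi_i)-c\otimes\xi_i\|^2=\|c^*(2-a_i-a_i^*)c\|$ together with a Schwarz-type domination using $\|a_i\|\leq1$. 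If correct, this is cleaner and makes the convexity step unnecessary.

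There is, however, a slip in that step as written: $(1-a_i)^*(1-a_i)\le 2-a_i-a_i^*$ gives $\|(1-a_i)c\|\to0$, i.e.\ $a_ic\to c$; but ``replacing $c$ by $c^*$ and taking adjoints'' yields $ca_i^*\to c$, which is just the adjoint of $a_ic^*\to c^*$ and says nothing about $ca_i$. Since $a_i=(\id\ot\omega_{\xi_i})(U)$ need not be self-adjoint, the right-handed convergence $ca_i\to c$ does not follow from what you have. The fix is one line: use the other Schwarz inequality $(1-a_i)(1-a_i)^*=1-a_i-a_i^*+a_ia_i^*\le 2-a_i-a_i^*$, so that
\[
\|c(1-a_i)\|^2=\big\|c(1-a_i)(1-a_i)^*c^*\big\|\le\big\|c(2-a_i-a_i^*)c^*\big\|=\big\|\mc U(c^*\ot\xi_i)-c^*\ot\xi_i\big\|^2\longrightarrow0.
\]
With this correction your (i)$\Rightarrow$(iii) is complete. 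One further caution on (iv)$\Rightarrow$(iii): you rightly note that \cite[Theorem~5.1]{daws} produces positive functionals rather than states, but the proposed normalisation ``$\mu_i(1)\to1$'' is not itself automatic from the approximate identity property (which only yields $\liminf_i\|a_i\|\ge1$ and hence $\liminf_i\mu_i(1)\ge1$); one needs the identity $\|a_i\|=\mu_i(1)$ for completely positive definite elements, or simply to read condition (iv) as referring to state-induced multipliers, as the paper implicitly does.
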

\begin{proof}
(i)$\Longrightarrow$(ii): This follows from Lemma \ref{lem:den_cond}, as the class of mixing representations certainly satisfies conditions (\ref{lem:den_cond:one}) and (\ref{lem:den_cond:two}), the latter by Proposition~\ref{prop:easy_prop_mix}, and condition (\ref{lem:den_cond:three}) is the hypothesis of having the Haagerup property.

(ii)$\Longrightarrow$ (i): This is precisely Proposition \ref{mix-dense imply HAP}.

(iii)$\Longrightarrow$ (i): For each $i\in \Ind$ let $(\phi_i,\sH_i,\xi_i)$
be the GNS construction corresponding to $\mu_i$.  By Lemma~\ref{mixequiv}, each of the representations $U_{\phi_i}$ (associated to $\phi_i$) is mixing, and so $U:=\bigoplus_{i\in \Ind} U_{\phi_i}$ will also be mixing.   Let $x_i = (\id\otimes\mu_i)(\wW)\in C_0(\G)$, so that
$(x_i)_{i\in \mathcal{I}}$ is a bounded approximate identity for $C_0(\G)$.
Then, for $a\in C_0(\G)$,
\begin{align} \big\| \mc U(a\otimes\xi_i) - a\otimes\xi_i \big\|^2
&= \big\| 2a^*a - 2\Re\big( \mc U(a\otimes\xi_i)
   \big| a\otimes\xi_n \big) \big\|
= \big\| 2a^*a - 2\Re\big( a^* (\id\otimes\omega_{\xi_i,\xi_i})(U_i)
   a \big) \| \nonumber\\
&= \big\| 2a^*a - 2\Re\big( a^* x_i a \big) \|
= \big\| a^*(2-x_i-x_i^*)a \big\|.\label{5.5.1}
\end{align}
As $(x_i)_{i\in \mathcal{I}}$ is a bounded approximate identity, (\ref{5.5.1}) converges
to $0$ for each fixed $a\in C_0(\G)$, so by Proposition~\ref{prop:inv_vects},
$U$ has almost invariant vectors.  Thus $\G$ has the Haagerup property.

(i)$\Longrightarrow$ (iii):
As $\G$ has the Haagerup property, there exists a mixing representation $U$ of $\QG$
with almost invariant vectors, say $(\xi_i)_{i\in \mathcal{I}}$.
Let $\phi$ be the representation of $C_0^u(\widehat{\QG})$ associated to $U$, and for each $i\in \mathcal{I}$ set $\mu_i = \omega_{\xi_i,\xi_i}\circ\phi$.
As $U$ is mixing, $x_i=(\id\otimes\mu_i)(\wW) =
(\id\otimes\omega_{\xi_i,\xi_i})(U) \in C_0(\G)$ for each $i$.
For $a,b\in C_0(\G)$,
\begin{equation} 0 = \lim_{i\in \Ind} \big(U(a\otimes\xi_i) - a\otimes\xi_i
   \big| b\otimes\xi_i \big)
= \lim_i b^* (\id\otimes\omega_{\xi_i,\xi_i})(U) a - b^*a
= \lim_i b^* x_i a - b^*a.
\end{equation}
So for $\mu\in C_0(\G)^*$,
\begin{equation} \ip{\mu b^*}{a} = \lim_i \ip{\mu b^*}{x_i a}. \end{equation}
By Cohen Factorisation, every member of $C_0(\G)^*$ has the form $\mu b^*$,
so we conclude that $x_i a \rightarrow a$ weakly.  Similarly
$ax_i \rightarrow a$ weakly.  As the weak and norm closures of convex sets are equal, we can move to
a convex combination of the net $(x_i)_{i\in \mathcal{I}}$ and obtain a bounded approximate
identity for $C_0(\G)$.  Notice that convex combinations of the $x_i$ will
arise as slices of $\wW$ by convex combinations of states, that is,
by slicing against states.  Thus we obtain some new family of states
$(\lambda_j)_{j\in \mathcal{J}}$ such that $((\id\otimes\lambda_j)(\wW))_{j\in \mathcal{J}}$ is a bounded
approximate identity in $C_0(\G)$.

(iii)$\Longrightarrow$ (iv): Notice that equivalently $((\id\otimes\mu_i)(\wW)^*)_{i\in \mathcal{I}}
= ((\id\otimes\mu_i)(\wW^*))_{i\in \mathcal{I}}$ is an approximate identity for $C_0(\G)$.
Then, as in Section~\ref{sec:mults}, for each $i\in \Ind$ the element $a_i = (\id\otimes\mu_i)(\wW^*)$
represents a completely positive left multiplier of $L^1(\hh\G)$.

(iv)$\Longrightarrow$ (iii): This follows from \cite{daws}
as if $a_i\in C_0(\QG)$ represents a completely positive left multiplier of $L^1(\hh\G)$ then there is a state $\mu_i\in C_0^u(\hh \QG)^*$ such that $a_i = (\id\otimes\mu_i)(\wW^*)$.
\end{proof}

\begin{exam}
After a preprint version of this paper appeared, the equivalence (i)$\Longleftrightarrow$(iv) was used in \cite{Caspers} to prove the Haagerup property for the (non-amenable, non-discrete) locally compact quantum group arising as a quantum deformation of $SU(1,1)$.
\end{exam}

\begin{rem}
Property (ii) was proved for $\bz$ by P.Halmos in \cite{Halmos}. The equivalence of (i) and (ii) for classical locally compact groups is well known, dating back to \cite{BR}, where the harder direction (i)$\implies$(ii) is implicit in the proof of \cite[Theorem 2.5]{BR}. Property (ii) can also be compared with recent work of Brown and Guentner \cite{BG}, who characterise the Haagerup property for a discrete group in terms of the equality of the ``$C_0$-completion'' of the complex group algebra and the full group $C^*$-algebra, i.e.\ the fact there are enough $C_0$-representations to recover the universal norm. This was extended to the locally compact setting by Jolissaint  in \cite{JolC0}.
\end{rem}

\begin{prop} \label{HAP:subgroups}
Let $\H$ be a closed quantum subgroup of $\G$ in the sense of Woronowicz.
If $\G$ has the Haagerup property and is coamenable, then $\H$
has the Haagerup property.
\end{prop}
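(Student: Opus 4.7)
My plan is to verify condition~(iii) of Theorem~\ref{equivHAP} for $\QH$ by ``restricting'' the states witnessing the Haagerup property of $\QG$ along the dual Hopf $^*$-homomorphism $\hh\pi\colon C_0^u(\hh\QH)\to \M(C_0^u(\hh\QG))$ attached to $\pi$. Fix a net $(\mu_i)_{i\in I}$ of states on $C_0^u(\hh\QG)$ such that $x_i := (\id\otimes\mu_i)(\wW_{\QG})$ is an approximate identity in $C_0(\QG)$, and set $\nu_i := \mu_i\circ\hh\pi$. Because $\hh\pi$ is non-degenerate it extends uniquely to a unital $^*$-homomorphism between multiplier algebras, from which (together with the canonical extension of $\mu_i$ to $\M(C_0^u(\hh\QG))$) one sees that $\nu_i$ is a state on $C_0^u(\hh\QH)$.

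Coamenability of $\QG$ means that $\Lambda_{\QG}$ is a $^*$-isomorphism, so $\pi$ may be viewed as a $^*$-homomorphism $\pi'\colon C_0(\QG)\to \M(C_0(\QH))$ whose image is precisely $C_0(\QH)$. Combining this with the duality relation $(\pi\otimes\id)(\WW_{\QG}) = (\id\otimes\hh\pi)(\wW_{\QH})$ recalled in Section~\ref{lcqgssection}, and writing $x_i = (\Lambda_{\QG}\otimes\mu_i)(\WW_{\QG})$, a direct slice computation gives
\[ \pi'(x_i) = (\pi\otimes\mu_i)(\WW_{\QG}) = (\id\otimes\mu_i)\bigl((\id\otimes\hh\pi)(\wW_{\QH})\bigr) = (\id\otimes\nu_i)(\wW_{\QH}), \]
and the element on the left lies in $C_0(\QH)$ as $\pi'$ has image $C_0(\QH)$.

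It remains to check that $(\pi'(x_i))_{i\in I}$ is an approximate identity for $C_0(\QH)$, which is routine: the net is bounded by $1$, and for $b\in C_0(\QH)$ surjectivity of $\pi'$ produces $a\in C_0(\QG)$ with $\pi'(a)=b$, so $\pi'(x_i)\,b = \pi'(x_i a)\to \pi'(a)=b$ by continuity of $\pi'$ together with $x_i a\to a$, the other side being symmetric. Applying the implication (iii)$\Rightarrow$(i) of Theorem~\ref{equivHAP} to $\QH$ then concludes the proof. The conceptual heart of the argument -- and the step that genuinely uses the hypothesis -- is the use of coamenability to identify $\pi$ with a map defined on the reduced algebra $C_0(\QG)$; without it one would only have $(\id\otimes\nu_i)(\wW_{\QH})\in \M(C_0(\QH))$, severing the link with condition~(iii) and leaving the conclusion out of reach.
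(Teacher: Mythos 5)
Your proposal is correct and follows essentially the same route as the paper: both pull the states back along the dual morphism $\hh\pi$, use coamenability to identify $C_0^u(\G)$ with $C_0(\G)$ (hence $\WW_\G$ with $\wW_\G$) so that $\pi$ becomes a surjection $C_0(\G)\to C_0(\H)$, and then push the approximate identity forward through this surjection. The only difference is cosmetic — you spell out the routine verification that the image of an approximate identity under a surjective morphism is again an approximate identity, which the paper leaves implicit.
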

\begin{proof}
From condition (\ref{equivHAP:three}) in Theorem~\ref{equivHAP}, we can find
$(\mu_i)_{i\in \mathcal{I}}$ a net of states in $C_0^u(\hh\G)^*$ with
$(\id\otimes\mu_i)(\wW_{\G})$ a bounded approximate identity in $C_0(\G)$.  Let $\pi:C_0^u(\G)
\rightarrow C_0^u(\H)$ verify that $\H$ is a closed quantum subgroup of $\G$.
For each $i\in \Ind$ set $\lambda_i = \mu_i\circ\hh\pi$, where $\hh \pi: C_0^u(\hh\H) \to \M( C_0^u(\hh \G))$ is the dual morphism to $\pi$, so that $\lambda_i$ is a state on
$C_0^u(\hh\H)$.  Then
\[ (\id\otimes\lambda_i)(\wW_{\H})
= (\Lambda_{\H}\otimes\mu_i\hh\pi)(\WW_{\H})
= (\Lambda_\H\pi\otimes\mu_i)(\WW_\G). \]
Now, as $\G$ is coamenable, $C_0(\G) = C_0^u(\G)$, and so $\WW_\G
= \wW_\G$ and also $\pi$ can be considered as a map $C_0(\G)\rightarrow
C_0^u(\H)$.  Thus
\[ (\id\otimes\lambda_i)(\wW_{\H}) =
(\Lambda_\H\pi\otimes\mu_i)(\wW_\G)
= \Lambda_\H\pi( (\id\otimes\mu_i)(\wW_\G) ). \]
As $\pi$ is onto, it follows that $((\id\otimes\lambda_i)(\wW_{\H}))_{i \in \Ind}$ is an approximate identity for
$C_0(\H)$, verifying that $\H$ has the Haagerup property.
\end{proof}

\begin{rem}
Every classical $\G$ and every discrete $\G$ is coamenable, and this extra hypothesis is not excessive (in the way that asking for $\G$ to be amenable would be: compare Remark~\ref{rem:amen_not_enough}!)  To prove this result without assuming $\G$ coamenable seems tricky: we would, for example, wish to know that if $\mu\in C_0(\hh\G)^*$ is a state with $(\id\otimes\mu)(\wW_\G) \in C_0(\G)$ then also $(\id\otimes\mu)(\WW_\G) \in C_0^u(\G)$.  This is related to the following question: if $U$ is a mixing corepresentation of $C_0(\G)$, then following \cite[Proposition~6.6]{ku} we can ``lift'' $U$ to a unique corepresentation $V$ of $C_0^u(\G)$ with $(\Lambda_\G\otimes\id)(V)=U$.  Will $V$ still be mixing (that is, have $C_0$-coefficients)?
\end{rem}

Condition (iii) of Theorem \ref{equivHAP} corresponds to the existence  of a sequence/net of positive definite $C_0$-functions on a classical group $G$ converging to $1$ pointwise (see \cite{book}). Another equivalent formulation is the existence of a proper conditionally negative definite function $\psi$ on $G$ which, via Sch\"onberg's theorem, gives rise to a semigroup of positive definite $C_0$-functions $(e^{-t\psi(\cdot)})_{t>0}$ connecting the trivial representation (at $t=0$) to the regular representation (as $t\rightarrow\infty$). The analogous statement in the quantum setting is phrased in terms of convolution semigroups.

\begin{deft} \label{convssDef}
A \emph{convolution semigroup of states} on $C_0^u(\QG)$ is a family $(\mu_t)_{t\geq 0}$ of states on $C_0^u(\QG)$ such that
\begin{rlist}
\item $ \mu_{s+t} = \mu_s \star \mu_t := (\mu_s \ot \mu_t)\Com$  for all $s,t \geq 0$;
\item $\mu_0 = \Cou_u$;
\item $\mu_t(a) \stackrel{t \to 0^+}{\longrightarrow} \Cou_u(a)$ for each $a\in C_0^u(\QG)$.
\end{rlist}
\end{deft}

We thus have the following trivial consequence of Theorem \ref{equivHAP}.

\begin{prop} \label{convsemimplyHAP}
Let $\QG$ be a locally compact quantum group. If
there exists a convolution semigroup of states $(\mu_t)_{t\geq 0}$ on $C_0^u(\widehat{\QG})$ such that each $a_t:=(\id \ot \mu_t)(\wW)$  is an element of $C_0(\QG)$ and $a_t$ tends strictly to $1$ as $t\to 0^+$, then $\QG$ has the Haagerup property.
\end{prop}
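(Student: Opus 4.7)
The plan is to show this is essentially a verification that hypothesis (iii) of Theorem~\ref{equivHAP} holds, so the bulk of the work has already been done in proving that equivalence. Given the convolution semigroup $(\mu_t)_{t\geq 0}$ on $C_0^u(\hh\QG)$, I would index a net of states by choosing any sequence $t_n \searrow 0$ (say $t_n = 1/n$) and setting $\nu_n := \mu_{t_n}$. Each $\nu_n$ is a state on $C_0^u(\hh\QG)$ and, by hypothesis, $a_n := (\id\otimes\nu_n)(\wW) = a_{t_n}$ lies in $C_0(\QG)$.

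Next I would argue that $(a_n)_{n\in\bn}$ is a bounded approximate identity for $C_0(\QG)$. Boundedness is immediate from $\|a_n\| \leq \|\nu_n\|\,\|\wW\| = 1$ since $\wW$ is unitary and $\nu_n$ is a state. The hypothesis says $a_t \to 1$ strictly in $\M(C_0(\QG))$ as $t\to 0^+$, which unpacks precisely to
\begin{equation}
\lim_{t\to 0^+} \|a_t b - b\| = \lim_{t\to 0^+} \|b a_t - b\| = 0, \qquad (b \in C_0(\QG)).
\end{equation}
Restricting to the sequence $t_n = 1/n$ yields the approximate identity property for $(a_n)_{n\in\bn}$.

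Having verified condition (iii) of Theorem~\ref{equivHAP} with the net $(\nu_n)_{n\in\bn}$, the implication (iii)$\Longrightarrow$(i) of that theorem delivers the Haagerup property for $\QG$. There really is no obstacle to overcome here — the content of the proposition lies entirely in Theorem~\ref{equivHAP}, and the convolution semigroup hypothesis has been tailored so as to supply the needed net of states directly. One might additionally remark that the semigroup structure itself is not used in the argument; it is the properness-type condition ($a_t \in C_0(\QG)$) together with the continuity condition at $t=0$ that does the work, and these together are exactly what is needed.
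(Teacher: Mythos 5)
Your argument is correct and is precisely the route the paper intends: the proposition is stated there as an immediate consequence of Theorem~\ref{equivHAP}, the point being exactly that the states $\mu_t$ for $t\to 0^+$ (or any sequence $t_n\searrow 0$) supply the net required in condition (iii), since strict convergence $a_t\to 1$ unpacks to the approximate identity property in $C_0(\QG)$. Your closing remark that the semigroup law plays no role in this implication is also accurate --- it only becomes relevant for the converse direction treated in Section~\ref{disc}.
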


Convolution semigroups of states have \emph{generating functionals} and are determined by them (see \cite{LS}). Thus to prove the converse implication it suffices to construct for a given quantum group with the Haagerup property a generating functional with certain additional properties guaranteeing that the resulting convolution semigroup satisfies the conditions above. However, in the general locally compact case it is difficult to decide whether a given densely defined functional is the generator of a convolution semigroup of bounded functionals. The situation is simpler if $\QG$ is discrete, which we return to in the next section.  A key task for us there will be to
see how much choice we have over the states which appear in Theorem~\ref{equivHAP}~(\ref{equivHAP:three}).  A first step in that programme is the following proposition.

\begin{prop}\label{prop:Rinv}
Let $\G$ have the Haagerup property.  Then there exists a net of states
$(\mu_i)_{i\in \mathcal{I}}$ on $C_0^u(\widehat{\QG})$ such that $\mu_i\circ\hh R_u = \mu_i$ for each $i\in \Ind$, and such that the net $\big((\id \ot \mu_i)(\wW)\big)_{i\in \Ind}$  is an approximate identity in $C_0(\QG)$.
\end{prop}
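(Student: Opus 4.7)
The plan is to symmetrize the net of states supplied by Theorem~\ref{equivHAP}(iii) under the universal unitary antipode $\hh R_u$ of $\hh\G$. First I would invoke Theorem~\ref{equivHAP}(iii) to obtain a net of states $(\nu_i)_{i\in\Ind}$ on $C_0^u(\hh\G)$ such that $a_i := (\id\otimes\nu_i)(\wW)$ is an approximate identity for $C_0(\G)$. Since $\hh R_u$ is a unital $*$-anti-automorphism of $C_0^u(\hh\G)$, the composition $\nu_i\circ\hh R_u$ is again a state, and I would set
\begin{equation}
\mu_i := \tfrac{1}{2}\bigl(\nu_i + \nu_i\circ\hh R_u\bigr).
\end{equation}
The identity $\hh R_u^2 = \id$ then immediately gives $\mu_i\circ\hh R_u = \mu_i$, delivering the required symmetry.

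The key input is the antipode compatibility $(R\otimes\hh R_u)(\wW) = \wW$ for the semi-universal multiplicative unitary; this can be derived from $(R_u\otimes\hh R_u)(\WW) = \WW$ via $\wW = (\Lambda_\G\otimes\id)(\WW)$ together with $\Lambda_\G\circ R_u = R\circ\Lambda_\G$. Rewriting it as $(\id\otimes\hh R_u)(\wW) = (R\otimes\id)(\wW)$ (valid since $R^2 = \id$) and slicing against $\nu_i$ in the second leg yields
\begin{equation}
(\id\otimes(\nu_i\circ\hh R_u))(\wW) = R(a_i),
\end{equation}
so that $(\id\otimes\mu_i)(\wW) = \tfrac{1}{2}(a_i + R(a_i))$.

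Finally I would verify that this averaged net is still an approximate identity for $C_0(\G)$. Since $R$ restricts to an involutive $*$-anti-automorphism of $C_0(\G)$, every $b\in C_0(\G)$ can be written as $R(c)$ for some $c\in C_0(\G)$, and then $R(a_i)\cdot b = R(a_i)R(c) = R(c a_i)\to R(c) = b$, with the symmetric statement for right multiplication; hence $(R(a_i))_{i\in\Ind}$ is itself an approximate identity, and the convex combination $\tfrac{1}{2}(a_i+R(a_i))$ inherits the property.

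The only technical point to pin down is the identity $(R\otimes\hh R_u)(\wW) = \wW$, which is a standard compatibility for the semi-universal multiplicative unitary (see \cite{ku}); once this is in hand the remainder is a direct symmetrization with no serious obstacles.
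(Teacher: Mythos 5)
Your proposal is correct and follows essentially the same route as the paper: symmetrize the states from Theorem~\ref{equivHAP}(iii) by averaging with $\hh R_u$, use the compatibility $(R\otimes\hh R_u)(\wW)=\wW$ to identify the slice of the symmetrized state as $\tfrac12(a_i+R(a_i))$, and observe that $R$ being an involutive anti-automorphism of $C_0(\G)$ preserves the approximate identity property. The only cosmetic difference is that the paper obtains the antipode identity from $(\hh R_u\otimes R)(\hh\Wuniv)=\hh\Wuniv$ (citing \cite[Proposition~7.2]{ku}) rather than from the fully universal bicharacter $\WW$, but these are the same standard fact.
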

\begin{proof}
Pick a net of states $(\mu_i)_{i\in \mathcal{I}}$ as in Theorem~\ref{equivHAP} (\ref{equivHAP:three}), and let $\lambda_i = \frac12(\mu_i+\mu_i\circ \hh R_u)$
for each $i\in \Ind$.  As $\hh R_u$ is a $*$-anti-homomorphism, each $\lambda_i$ is state, and clearly $\lambda_i\circ\hh R_u=\lambda_i$.  By \cite[Proposition~7.2]{ku} we know that $(\hh R_u \otimes R)(\hh\Wuniv) = \hh\Wuniv$, and so $(R\otimes\hh R_u)(\wW) = \wW$, again using that $\hh R_u$ and $R$ are $*$-maps.  It follows that
\begin{equation} (\id \ot \mu_i\circ \hh R_u)(\wW)
= R\big( (\id\ot \mu_i)(\wW) \big) \in C_0(\G),
\end{equation}
as $R$ is an anti-automorphism of $C_0(\G)$.  Similarly, it follows easily that a net $(a_i)_{i\in \Ind}$ in $C_0(\G)$ is an approximate identity if and only if $(R(a_i))_{i\in \Ind}$ is, if and only if $(\frac12(a_i+R(a_i)))_{i\in \Ind}$ is.  Consequently, $\big((\id \ot \lambda_i)(\wW)\big)_{i\in \Ind}$ is indeed an approximate identity for $C_0(\G)$.
\end{proof}

\section{Haagerup approximation property for discrete quantum groups} \label{disc}

In this section $\QG$ will be a discrete quantum group, so that $\hh\QG$ is compact.  We present certain further equivalent characterisations of the Haagerup property in this case.

First observe the following consequence of Proposition \ref{coamenlcqg}.

\begin{prop}\label{amenHAP}
Every amenable discrete quantum group $\QG$ has the Haagerup property.
\end{prop}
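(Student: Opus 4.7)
The plan is to reduce this statement directly to Proposition \ref{coamenlcqg}, which already furnishes the Haagerup property whenever the dual is coamenable. Concretely, what one needs is the duality principle for amenability/coamenability between a discrete quantum group and its compact dual: if $\QG$ is a discrete quantum group, then $\QG$ is amenable if and only if the compact quantum group $\widehat{\QG}$ is coamenable. Once this equivalence is invoked, Proposition \ref{coamenlcqg} immediately gives that $\QG$ has the Haagerup property.

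The substantive content therefore lies outside of the present paper: the implication ``amenable discrete $\QG$ $\Longrightarrow$ coamenable compact $\widehat{\QG}$'' is the non-trivial half, and is due to Tomatsu (building on earlier work of Ruan in the Kac case). I would simply cite this result. Since Proposition \ref{coamenlcqg} has already been established, no further argument is required.

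If a more self-contained route were desired, one could instead appeal directly to characterisation (iii) of Theorem \ref{equivHAP}: from an invariant mean on $L^\infty(\QG)$ one constructs, via convolution against the coamenability witnesses on the dual side, a net of states $(\mu_i)$ on $C(\widehat{\QG}) = C_0^u(\widehat{\QG})$ (the two coincide as $\widehat\QG$ becomes coamenable) such that $\bigl((\id \otimes \mu_i)(\wW)\bigr)_{i}$ is an approximate identity of $c_0(\QG)$. However, this is exactly what the Tomatsu--Ruan duality provides, so no saving is gained.

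The only potential obstacle is therefore bibliographic rather than mathematical: one must cite the correct reference for the discrete/compact amenability--coamenability duality. With that citation in place, the proof is a single line.
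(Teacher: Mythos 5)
Your proposal is correct and follows exactly the paper's own argument: the paper proves this by combining Proposition~\ref{coamenlcqg} with Tomatsu's theorem \cite{Tomatsu} that amenability of a discrete quantum group $\QG$ implies coamenability of $\widehat{\QG}$. The only minor point is that the full equivalence is not needed, just the one implication you correctly identify as the substantive half.
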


\begin{proof}
This follows from Proposition \ref{coamenlcqg} and the fact that amenability of $\QG$ implies the coamenability of $\widehat{\QG}$, shown in \cite{Tomatsu}.
\end{proof}

Recall the notations of Subsection~\ref{compdiscsubsect}.  We will first provide a simple reinterpretation of condition (iii) appearing in Theorem~\ref{equivHAP} in the case of discrete $\QG$.  In Theorem~\ref{equivHAP}, for a state $\mu$ on $C_0^u(\hh\G)$, we considered the slice $(\id\ot\mu)(\wW)$.  As $\wW = \sigma(\hh\Wuniv^*)$, we can equivalently look at $(\mu\ot\id)(\hh\Wuniv)^*$, and clearly as far as the hypothesis of Theorem~\ref{equivHAP} are concerned, we may simply look at $\mc F\mu := (\mu\ot\id)(\hh\Wuniv)$.  When $\G$ is discrete, for $\alpha\in\Irr_{\widehat \QG}$ we shall write $(\mc F\mu)^\alpha \in \mathbb M_{n_\alpha}$ for the $\alpha$-component.  As noted in (\ref{unitcompact}),
\begin{equation}
\hh\Wuniv=\sum_{\alpha\in\Irr_{\hh\QG}}u_{ij}^\alpha\otimes e_{ij}\in \M(C^u(\hh\QG)\otimes c_0(\G)),
\end{equation}
so that $(\mc F\mu)^\alpha$ is the matrix with $(i,j)$ entry $\mu(u^\alpha_{ij})$.  Furthermore, this now gives us a natural interpretation of $\mc F\mu$ for a (possibly unbounded) functional $\mu$ on $\Pol(\hh\G)$.

\begin{prop}\label{lemma:hap_dis_case}
A discrete quantum group $\G$ has the Haagerup property if and only if there is a net of states
$(\mu_i)_{i\in \mathcal{I}}$ on $\Pol(\hh\G)$  such that:
\begin{enumerate}
\item for each $i\in \mathcal{I}$, we have that $((\mc F\mu_i)^\alpha)_{\alpha \in  \Irr_{\widehat{\QG}}} \ \in \prod_{\alpha \in \Irr_{\widehat \QG}}
\mathbb M_{n_\alpha}$ is actually in $\bigoplus_{\alpha \in \Irr_{\widehat \QG}} \mathbb M_{n_\alpha}$;
\item for each $\alpha \in \Irr_{\widehat \QG}$, the net $((\mc F\mu_i)^\alpha)_{i\in \mathcal{I}}$ converges in norm
to $1_{\mathbb M_{n_\alpha}}$.
\end{enumerate}
If the conditions above hold, the indexing set $\mathcal{I}$ can be chosen to be equal to $\bn$.
\end{prop}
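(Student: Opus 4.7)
The plan is to deduce this from part~(iii) of Theorem~\ref{equivHAP} by translating its content into the matrix-coefficient language natural for discrete $\G$. The key observation is that for any state $\mu$ on $C_0^u(\hh\G)$, the identity $\wW = \sigma(\hh\Wuniv^*)$ together with the self-adjointness of $\mu$ gives $(\id\ot\mu)(\wW) = (\mc F\mu)^*$, while \eqref{unitcompact} shows that the $\alpha$-block of $\mc F\mu \in \M(c_0(\G)) = \prod_\alpha \mathbb{M}_{n_\alpha}$ is the matrix $\big(\mu(u^\alpha_{ij})\big)_{i,j}$; moreover, by Subsection~\ref{compdiscsubsect}, states on $C_0^u(\hh\G)$ restrict bijectively to states on $\Pol(\hh\G)$.

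For the forward implication, Theorem~\ref{equivHAP}(iii) supplies a net of states $(\nu_i)_{i\in\Ind}$ on $C_0^u(\hh\G)$ with $a_i := (\id\ot\nu_i)(\wW)$ an approximate identity in $c_0(\G)$. Set $\mu_i := \nu_i|_{\Pol(\hh\G)}$, so that $a_i = (\mc F\mu_i)^*$ lies in $c_0(\G) = \bigoplus_\alpha \mathbb{M}_{n_\alpha}$, giving condition~(1). The bounded net $(a_i)$ (of norm at most $1$) being an approximate identity in $c_0(\G)$ is equivalent to strict convergence $a_i \to 1$ in $\M(c_0(\G))$, which by testing against the block unit projections $p_\alpha\in c_0(\G)$ is in turn equivalent to norm convergence of each block $a_i^\alpha$ to $1_{\mathbb{M}_{n_\alpha}}$; taking adjoints yields condition~(2).

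For the converse, given $(\mu_i)_{i\in\Ind}$ on $\Pol(\hh\G)$ satisfying (1) and (2), extend each $\mu_i$ to a state $\nu_i$ on $C_0^u(\hh\G)$ and set $a_i := (\id\ot\nu_i)(\wW) = (\mc F\mu_i)^*$. Condition~(1) places $a_i$ in $c_0(\G)$, while condition~(2) together with the uniform bound $\|a_i\|\leq 1$ yields, via a standard splitting over a finite set of blocks (where $\|a_i^\alpha - 1_{\mathbb{M}_{n_\alpha}}\|$ can be made small) plus a $c_0$-tail estimate for the remaining blocks, that $(a_i)$ is an approximate identity in $c_0(\G)$. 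The Haagerup property follows from Theorem~\ref{equivHAP}(iii).

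For the final sequential claim, recall that $\G$ is second countable, so $c_0(\G)$ is separable and $\Irr_{\hh\G}$ is at most countable, say $\Irr_{\hh\G} = \{\alpha_k : k\in\bn\}$. A diagonal extraction from the net produced above then yields a sequence $(\mu_{i_n})_{n\in\bn}$ with $\|(\mc F\mu_{i_n})^{\alpha_k} - 1_{\mathbb{M}_{n_{\alpha_k}}}\| < 1/n$ for all $k\leq n$, which still satisfies (1) and (2). The argument presents no serious obstacle; the only delicate point is the careful identification of $(\id\ot\mu)(\wW)$ with $(\mc F\mu)^*$ via \eqref{unitcompact}, which has already been spelled out in the discussion preceding the statement.
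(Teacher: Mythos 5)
Your proposal is correct and follows essentially the same route as the paper: the bijection between states on $\Pol(\hh\G)$ and on $C^u(\hh\G)$, the identification $(\id\ot\mu)(\wW)=(\mc F\mu)^*$ via $\wW=\sigma(\hh\Wuniv^*)$ and \eqref{unitcompact}, and the equivalence (i)$\Longleftrightarrow$(iii) of Theorem~\ref{equivHAP}. You merely spell out the routine verification that a contractive net in $\M(c_0(\G))=\prod_\alpha\mathbb M_{n_\alpha}$ is an approximate identity for $c_0(\G)$ exactly when conditions (1) and (2) hold, which the paper leaves implicit.
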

\begin{proof}
As observed in Subsection~\ref{compdiscsubsect}, there is a bijection between states on $\Pol(\hh\G)$ and states on $C^u(\hh\G)$.  So by Theorem~\ref{equivHAP} (\ref{equivHAP:one})$\Longleftrightarrow$(\ref{equivHAP:three}), $\G$ has the Haagerup property if and only if there is a
net of states $(\mu_i)_{i \in \Ind}$ on $\Pol(\hh\G)$ such that $((\id\otimes
\mu_i)(\wW))_{i \in \Ind}$ is an approximate identity in $c_0(\G)$.  Via the discussion above, this is equivalent to the two stated conditions.

The last statement follows easily from the fact that $\Irr_{\widehat{\QG}}$ is countable.
\end{proof}

\subsection{The Haagerup property for $\QG$ via the von Neumann algebraic Haagerup approximation property for $L^{\infty}(\widehat{\QG})$}

Let $\mc M$ be a von Neumann algebra with a normal state $\phi$, and let
$(\pi,\Hil,\xi_0)$ be the GNS construction (when $\phi$ is faithful, we shall
tend to drop $\pi$).  Let $\Phi:\mc M\rightarrow\mc M$ be a unital completely positive map which preserves $\phi$.  Then, there is an induced map $T\in\mc B(\Hil)$ with $T(\pi(x)\xi_0) = \pi(T(x))\xi_0$.

The following definition of the von Neumann algebraic Haagerup approximation property is usually considered only for a von Neumann algebra equipped with a faithful
normal trace, and in that case, does not depend on the actual choice of such a trace (see \cite{j}). Here we propose the least restrictive possible extension to the case of general faithful normal states (for an extension reaching also the case of faithful normal semifinite weights and its properties see the forthcoming work \cite{CaSkal}).

\begin{deft} \label{HAPalg}
A von Neumann algebra $\mc M$ equipped with a faithful normal state $\phi$ is said to have the Haagerup approximation property (for $\phi$) if there exists a family of unital completely positive $\phi$-preserving normal maps $(\Phi_i)_{i\in \mathcal{I}}$ on $\mc M$ which converge to $\id_{\mc M}$ in the point $\sigma$-weak topology such that each of the respective induced  maps $T_i$ on $L^2(\mc M, \phi)$ is compact.  As each $T_i$ is a contraction, a standard argument shows that the point $\sigma$-weak convergence is equivalent to $T_i\stackrel{i \in \mathcal{I}}{\longrightarrow}1$ strongly on $L^2(\mc M,\phi)$.
\end{deft}

We use condition (iii) of Theorem \ref{equivHAP} to connect the Haagerup property for $\QG$ and the Haagerup approximation property for  $L^{\infty}(\hh \QG)$. On one hand the states featuring in Theorem \ref{equivHAP} can be used  to construct certain approximating multipliers on $L^{\infty}(\hh \QG)$; on the other given approximating maps on $L^{\infty}(\hh \QG)$ we can attempt to ``average'' them into multipliers and thus obtain states with the required properties.

For the following, recall that we denote the Haar state on $\hh \QG$ by $\hh \varphi$.

\begin{tw}\label{groupalg}
Let $\QG$ be a discrete quantum group. If $\QG$ has the Haagerup property, then $L^{\infty}(\hh \QG)$ has the Haagerup approximation property for $\hh\varphi$ (in the sense of Definition~\ref{HAPalg}).
\end{tw}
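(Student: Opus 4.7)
My plan is to use the equivalence of (i) and (iii) in Theorem~\ref{equivHAP} to extract a net of states, and then convert each state into a completely positive multiplier via the construction of Section~\ref{sec:mults}, using Proposition~\ref{states-multipliers} to handle simultaneously the unitality, state-preservation, and $L^2$-implementation.  Concretely, since $\QG$ has the Haagerup property, Theorem~\ref{equivHAP}~(\ref{equivHAP:three}) provides a net of states $(\mu_i)_{i\in\Ind}$ on $C^u(\hh\QG)$ such that $a_i := (\id\ot\mu_i)(\wW) \in c_0(\QG)$ and $(a_i)_{i\in\Ind}$ is a bounded approximate identity for $c_0(\QG)$.  For each $i$, let $(\phi_i,\sH_i,\xi_i)$ be the GNS construction of $\mu_i$, set $U_i = (\id\ot\phi_i)(\wW)$, and define
\begin{equation}
\Phi_i(x) = (\id\ot\omega_{\xi_i,\xi_i})\bigl(U_i(x\ot 1)U_i^*\bigr), \qquad x\in L^\infty(\hh\QG).
\end{equation}

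By the discussion following \eqref{LfromU}, each $\Phi_i$ is a completely positive multiplier of $L^\infty(\hh\QG)$, and normality follows from the fact that the pre-adjoint is a multiplier of $L^1(\hh\QG)$ (Proposition~\ref{prop:mult_char}).  Proposition~\ref{states-multipliers} then tells us that $\Phi_i$ is unital, preserves the Haar state $\hh\varphi$, and the induced contraction $T_i$ on $L^2(\hh\QG)$ coincides, under the canonical identification $L^2(\hh\QG)\cong L^2(\QG)$, with the element $(\id\ot\omega_{\xi_i,\xi_i})(U_i) = (\id\ot\mu_i)(\wW) = a_i \in c_0(\QG)$, viewed as a multiplication operator.

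It remains to verify the two conditions in Definition~\ref{HAPalg}. For compactness: since $c_0(\QG) = \bigoplus_{\alpha\in\Irr_{\hh\QG}}^{c_0} \mathbb M_{n_\alpha}$ acts on $L^2(\QG)$ diagonally, with each summand $\mathbb M_{n_\alpha}$ acting on a finite-dimensional subspace of $L^2(\QG)$, any element of $c_0(\QG)$ is a norm limit of finite-rank operators, hence $T_i = a_i$ is compact.  For the approximation property: as $(a_i)$ is a bounded approximate identity in $c_0(\QG)$ and $c_0(\QG)$ acts non-degenerately on $L^2(\QG)$, a standard density argument shows $T_i\to 1$ in the strong operator topology on $L^2(\hh\QG)$. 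By the remark in Definition~\ref{HAPalg}, this is equivalent to $\Phi_i\to\id_{L^\infty(\hh\QG)}$ in the point $\sigma$-weak topology, which establishes the Haagerup approximation property.

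There is no substantial obstacle here, since the deep technical work has already been carried out: Proposition~\ref{states-multipliers} packages together unitality, state-invariance, and the identification of the induced $L^2$-operator, while Theorem~\ref{equivHAP} supplies precisely the states whose associated $L^2$-implementation lies in $c_0(\QG)$.  The only mild verification is the compactness of multiplication by an element of $c_0(\QG)$ on $L^2(\QG)$, which is immediate from the block decomposition of $c_0(\QG)$ for a discrete quantum group.
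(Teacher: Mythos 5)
Your proof is correct and follows essentially the same route as the paper: extract the states from Theorem~\ref{equivHAP}~(\ref{equivHAP:three}), form the associated completely positive multipliers, and invoke Proposition~\ref{states-multipliers} to get unitality, $\hh\varphi$-invariance and the identification $T_i=a_i\in c_0(\G)$, from which compactness and strong convergence to $1$ follow exactly as in the paper. The only (cosmetic) difference is that you route through the GNS representation $U_i$ of $\mu_i$ so as to apply Proposition~\ref{states-multipliers} in its literal form, whereas the paper applies the same construction directly with $\wW$ and $\mu_i$.
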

\begin{proof}
Let $(\mu_i)_{i \in \Ind}$ be a net of states in $C_0^u(\hh\G)^*$ satisfying condition (iii) in Theorem \ref{equivHAP}.  For each $i\in \Ind$ use the representation $\wW$ and
the states $\mu_i$ to build completely positive left multipliers $L_i:L^\infty(\hh\QG)\rightarrow L^\infty(\hh\QG)$ defined by
\begin{equation}
L_i(x)=(\id\otimes\mu_i)(\wW(x\otimes 1)\wW^*),\quad (x\in L^\infty(\hh{\QG})).
\end{equation}
By Proposition  \ref{states-multipliers}, each $L_i$ is a normal, unital,
$\hh\varphi$-preserving completely positive map, and induces an operator
$T_i$ on $L^2(\hh\G)$.  Furthermore, $T_i = (\id\otimes\mu_i)(\wW)$
which is a member of $c_0(\G)$ by condition (iii) in Theorem \ref{equivHAP}.  As $\G$ is discrete,
$c_0(\G)$ is the $c_0$-direct sum of matrix algebras, and so in particular
$T_i$ is a compact operator on $L^2(\hh\QG)$.  Finally, as $(T_i)_{i \in \Ind}$ is a bounded
approximate identity for $c_0(\G)$, $T_i\rightarrow 1_{L^2(\G)}$ strongly, as required.
\end{proof}

For the converse we need to start with normal, unital, $\hh\varphi$-preserving
completely positive maps $\Phi:L^\infty(\hh\G)\rightarrow L^\infty(\hh\G)$, and
``average'' these into multipliers. This can be achieved when $\G$ is discrete and unimodular. To this end we will use the following version of Theorem~5.5 of \cite{KrausRuan}, rephrased in our language for the convenience of the reader. Recall that the comultiplication of a locally compact quantum group is always injective.

\begin{tw}[Theorem 5.5, \cite{KrausRuan}] \label{KRtw}
Let $\QG$ be a discrete unimodular quantum group and let $\Phi:L^\infty(\hh\G)\rightarrow L^\infty(\hh\G)$ be a normal, completely positive, unital map. Let $E$ denote the unique $\hh \varphi$-preserving conditional expectation from $ L^\infty(\hh\G)\,\wot\,L^\infty(\hh\G)$ to $\hh \Delta (L^\infty(\hh\G))$ (whose existence follows from the fact that $\hh \varphi$ is a trace). Then the formula
\begin{equation}
 L = (\hh \Delta)^{-1} \circ E \circ (\Phi \ot \id_{L^\infty(\hh\G)}) \circ \hh\Delta
\label{KRaverage} \end{equation}
defines a normal unital completely positive map, which is (the adjoint of) a left multiplier on $L^\infty(\hh\G)$ represented by the element
\begin{equation} a = (\hh\varphi\otimes\id)( (\Phi\otimes\id)(\hh W) \hh W^* ) \in \M (c_0(\G)). \label{KRa}\end{equation}
\end{tw}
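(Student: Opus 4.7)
The plan is to establish three claims — that $L$ is a well-defined normal unital completely positive map; that $L$ is the adjoint of a left multiplier on $L^\infty(\hh\G)$; and that the representing element of this multiplier is $a$ — by dispatching the first via composition, and treating the second and third simultaneously through a direct calculation of $(L\ot\id)(\hh W)$ combined with Proposition~\ref{prop:mult_char}~(3).

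\textbf{Step 1.} The coproduct $\hh\Delta$ is a normal injective unital $^*$-homomorphism, so $\hh\Delta^{-1}$ is a well-defined normal $^*$-isomorphism on the range of $E$ (which by hypothesis equals $\hh\Delta(L^\infty(\hh\G))$). Each of $\hh\Delta$, $\Phi\ot\id$, $E$ and $\hh\Delta^{-1}$ is normal, unital and completely positive, so their composition $L$ is as well.

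\textbf{Step 2.} Expanding the definition of $L$,
\begin{equation*}
(L\ot\id)(\hh W)=(\hh\Delta^{-1}\ot\id)\,(E\ot\id)\,(\Phi\ot\id\ot\id)\,(\hh\Delta\ot\id)(\hh W).
\end{equation*}
The corepresentation identity $(\hh\Delta\ot\id)(\hh W)=\hh W_{13}\hh W_{23}$, combined with the fact that $\Phi$ acts only on the first tensor leg (and that $\hh W_{23}$ has trivial first leg), gives
\begin{equation*}
(\Phi\ot\id\ot\id)(\hh W_{13}\hh W_{23})=\bigl((\Phi\ot\id)(\hh W)\bigr)_{13}\hh W_{23}.
\end{equation*}
Setting $T:=(\Phi\ot\id)(\hh W)\hh W^{*}\in L^\infty(\hh\G)\vnten L^\infty(\G)$, this rewrites as $T_{13}\,(\hh\Delta\ot\id)(\hh W)$. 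The first two legs of $(\hh\Delta\ot\id)(\hh W)$ lie in $\hh\Delta(L^\infty(\hh\G))$, so the right-module property of $E$ (applied legwise to $E\ot\id$) yields
\begin{equation*}
(E\ot\id)\bigl(T_{13}\,(\hh\Delta\ot\id)(\hh W)\bigr)=(E\ot\id)(T_{13})\cdot(\hh\Delta\ot\id)(\hh W).
\end{equation*}
A short direct computation using left-invariance $(\id\ot\hh\varphi)\hh\Delta(\cdot)=\hh\varphi(\cdot)1$ together with faithfulness of $\hh\varphi$ shows that $E(y\ot 1)=\hh\varphi(y)(1\ot 1)$ for all $y\in L^\infty(\hh\G)$; hence $(E\ot\id)(T_{13})=1\ot 1\ot a$ with $a=(\hh\varphi\ot\id)(T)$, precisely the formula~\eqref{KRa}. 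Applying $\hh\Delta^{-1}\ot\id$ then collapses the $\hh\Delta$-structure on the first two legs and delivers $(L\ot\id)(\hh W)=(1\ot a)\hh W$. By Proposition~\ref{prop:mult_char}, $L$ is the adjoint of a left multiplier represented by $a$, and the membership $a\in\M(c_0(\G))$ follows automatically from the final statement of that proposition.

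\textbf{Main obstacle.} The decisive manoeuvre is the factorisation $\bigl((\Phi\ot\id)(\hh W)\bigr)_{13}\hh W_{23}=T_{13}\,(\hh\Delta\ot\id)(\hh W)$, which cleanly separates the ``deformation piece'' $T_{13}$ (to which $E\ot\id$ applies as the scalar slice $1\ot 1\ot(\hh\varphi\ot\id)(T)$, producing $a$) from the ``quantum-group piece'' $(\hh\Delta\ot\id)(\hh W)$ (which slides through $E\ot\id$ unchanged by the module property). Once this decomposition is spotted, the formula for $a$ emerges directly from left-invariance of $\hh\varphi$. The role of unimodularity is hidden in the existence and bimodularity of the conditional expectation $E$, which exists uniquely precisely because $\hh\varphi$ is a trace.
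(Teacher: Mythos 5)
Your argument is correct. Note that the paper itself offers no proof of this statement --- it is imported verbatim (in rephrased form) from Theorem~5.5 of \cite{KrausRuan}, so there is no in-paper argument to compare against; what you have written is a legitimate self-contained derivation. The two pillars both check out: the identity $(\Phi\ot\id\ot\id)(\hh W_{13}\hh W_{23})=((\Phi\ot\id)(\hh W))_{13}\hh W_{23}$ is justified because $\hh W_{23}$ is a unitary fixed by the UCP map $\Phi\ot\id\ot\id$, hence lies in its multiplicative domain; and the computation $E(y\ot 1)=\hh\varphi(y)(1\ot 1)$ follows exactly as you indicate from $(\id\ot\hh\varphi)\hh\Delta(w)=\hh\varphi(w)1$, the defining property $(\hh\varphi\ot\hh\varphi)(\hh\Delta(w)^*X)=(\hh\varphi\ot\hh\varphi)(\hh\Delta(w)^*E(X))$, and faithfulness of $\hh\varphi$ (pedantically, the invariance you use is usually called \emph{right}-invariance, but the Haar state of the compact quantum group $\hh\G$ is bi-invariant, so nothing is at stake). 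The passage $(1\ot1\ot a)(\hh\Delta\ot\id)(\hh W)=(\hh\Delta\ot\id)((1\ot a)\hh W)$ and the appeal to condition (3) of Proposition~\ref{prop:mult_char} (which also yields $a\in\M(c_0(\G))$ for free) complete the proof correctly; as a consistency check, taking adjoints in your formula for $a$ recovers the expression $a^*=(\id\ot\hh\varphi)(W^*(\id\ot\Phi)(W))$ used in the proof of Theorem~\ref{groupalgKac}. The only implicit step worth a sentence in a polished write-up is that $(E\ot\id)(T_{13})=1\ot1\ot(\hh\varphi\ot\id)(T)$ for a general (non-elementary) $T$, which follows since both sides are normal maps agreeing on the weak$^*$-dense subalgebra spanned by elementary tensors.
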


\begin{rem}
The multiplier $L$ above can be alternatively described by the following formula (which makes sense for an arbitrary locally compact quantum group $\QG$):
\begin{equation}\label{e6.4} L(x) = (\hh\varphi\otimes\id)
\big( \hh W ((\Phi\otimes\id)\hh\Delta(x)) \hh W^* \big),\qquad (x\in L^\infty(\hh\QG)).
\end{equation}
Using this alternative description, we can show that the suitably reformulated Theorem \ref{KRtw} remains valid if we only assume that $\G$ is unimodular (i.e.\ drop the discreteness assumption).
\end{rem}

We can now generalise Choda's result that the Haagerup property is a von Neumann property of a discrete group from \cite{choda} to the quantum setting.

\begin{tw} \label{groupalgKac}
Let $\QG$ be a discrete quantum group and assume that $\hh \QG$ is of Kac type. Then $\QG$ has the Haagerup property if and only if
 $L^{\infty}(\hh \QG)$ has the Haagerup approximation property.
\end{tw}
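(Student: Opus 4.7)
The forward implication is Theorem \ref{groupalg}. For the converse, the plan is the following. Suppose $L^\infty(\hh\QG)$ has the Haagerup approximation property, so there is a net $(\Phi_i)_{i\in\Ind}$ of normal, unital, $\hh\varphi$-preserving completely positive maps on $L^\infty(\hh\QG)$ whose $L^2$-operators $T_i$ are compact and converge strongly to $1_{L^2(\hh\QG)}$. Since $\hh\QG$ is Kac, $\hh\varphi$ is a trace, and the Kraus--Ruan averaging of Theorem \ref{KRtw} applies: set $L_i=\hh\Delta^{-1}\circ E\circ(\Phi_i\otimes\id)\circ\hh\Delta$ to obtain unital completely positive multipliers of $L^\infty(\hh\QG)$, each represented by some $a_i\in\M(c_0(\QG))$. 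The goal is to verify that $(a_i)_{i\in\Ind}\subset c_0(\QG)$ and forms an approximate identity for $c_0(\QG)$; then Theorem \ref{equivHAP}(iv) yields the Haagerup property for $\QG$.

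Tracing through the $L^2$-picture, since $\hh\varphi$ is a trace, $\hh\Delta$ induces an isometry $V:L^2(\hh\QG)\to L^2(\hh\QG)\vnten L^2(\hh\QG)$ via $\hh\eta(x)\mapsto(\hh\eta\otimes\hh\eta)(\hh\Delta(x))$, and the conditional expectation $E$ corresponds on $L^2$ to the projection $VV^*$. A direct unravelling gives
\[
T_{L_i}=V^*(T_i\otimes 1)V.
\]
Realising the completely positive multiplier $L_i$ via a state using \cite[Theorem 5.1]{daws} and invoking Proposition \ref{states-multipliers} identifies $T_{L_i}$ with the operator of left multiplication by $a_i^*$ on $L^2(\QG)\cong L^2(\hh\QG)$.

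The hard part will be to show that $T_{L_i}$ is compact, so that $a_i\in c_0(\QG)$ (noting that $c_0(\QG)$ is the ideal of compact operators inside $\M(c_0(\QG))=\prod_\alpha\mathbb M_{n_\alpha}$). Although $T_i\otimes 1$ is not compact, the Peter--Weyl structure will rescue us. Setting $\mc H_\alpha:=\overline{\mathrm{span}}\{\hh\eta(u^\alpha_{ij}):i,j\}$, the coproduct formula $\hh\Delta(u^\alpha_{ij})=\sum_k u^\alpha_{ik}\otimes u^\alpha_{kj}$ forces $V(\mc H_\alpha)\subset\mc H_\alpha\otimes\mc H_\alpha$. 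Consequently, for any $\xi=\sum_\alpha\xi^\alpha\in L^2(\hh\QG)$, the operator $V^*(|\xi\rangle\otimes 1)$ will decompose as a block-diagonal operator along $(\mc H_\alpha)$ with $\alpha$-block of norm at most $\|\xi^\alpha\|\to 0$, hence will be compact. Factorising $V^*(|\xi\rangle\langle\eta|\otimes 1)V=V^*(|\xi\rangle\otimes 1)\cdot(\langle\eta|\otimes 1)V$ then shows that $V^*(\,\cdot\,\otimes 1)V$ sends rank-one operators on $L^2(\hh\QG)$ to compact operators, and a norm-limit argument (using $\|V^*(T\otimes 1)V\|\leq\|T\|$) extends this to all compact $T$, in particular $T=T_i$.

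It will then remain to deduce the approximate identity property. Strong convergence $T_i\to 1$ gives $T_{L_i}=V^*(T_i\otimes 1)V\to V^*V=1$ strongly on $L^2(\hh\QG)$. Restricting to the finite-dimensional subspace $\mc H_\alpha$ upgrades this to norm convergence, which translates to $\|a_i^\alpha-1_{\mathbb M_{n_\alpha}}\|\to 0$ for each $\alpha\in\Irr_{\hh\QG}$ (where $a_i=(a_i^\alpha)_\alpha$). Combined with the uniform bound $\|a_i\|\leq 1$ (from unitality and complete positivity of $L_i$), a standard $\varepsilon/N$ argument yields $\|a_i b-b\|+\|ba_i-b\|\to 0$ for every $b\in c_0(\QG)=\bigoplus_\alpha\mathbb M_{n_\alpha}$, so $(a_i)$ is an approximate identity for $c_0(\QG)$. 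An appeal to Theorem \ref{equivHAP}(iv) then closes the argument.
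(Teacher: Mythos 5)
Your argument is correct, and while it shares the paper's overall architecture --- Kraus--Ruan averaging of the approximating maps $\Phi_i$ into completely positive multipliers $L_i$, identification of the representing elements $a_i$, and a final appeal to Theorem \ref{equivHAP} --- the two technical steps are carried out by genuinely different means. The paper stays at the level of the multiplicative unitary: it computes $a_i^* = (\id\otimes\omega_{\xi_0,\xi_0})\big(W^*(1\otimes T_i)W\big)$ and checks membership in $c_0(\QG)$ by factorising the rank-one case as a product of two slices of $W^*$ and $W$; it then only establishes \emph{weak} convergence $a_i^*x\to x$ (via $c_0(\QG)^*=\ell^1(\QG)$ and the point-$\sigma$-weak convergence of $\Phi_i$), which forces a closing Hahn--Banach/convex-combination step to produce a genuine bounded approximate identity. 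You instead work throughout in the $L^2$-picture: the identity $T_{L_i}=V^*(T_i\otimes 1)V$ combined with the Peter--Weyl invariance $V(\mc H_\alpha)\subseteq\mc H_\alpha\otimes\mc H_\alpha$ gives compactness via a block-diagonal estimate (your factorisation $V^*(|\xi\rangle\langle\eta|\otimes 1)V=V^*(|\xi\rangle\otimes 1)\cdot(\langle\eta|\otimes 1)V$ is the exact analogue of the paper's slice factorisation), and --- the genuine gain --- strong convergence $T_{L_i}\to 1$ restricted to the finite-dimensional blocks $\mc H_\alpha$ yields \emph{norm} convergence $a_i^\alpha\to I_{n_\alpha}$, so the approximate identity property follows from the $\varepsilon/N$ argument with no need to pass to convex combinations. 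Both routes rely on the same external inputs (Theorem \ref{KRtw}, \cite[Theorem~5.1]{daws} to realise each $L_i$ by a state, and Proposition \ref{states-multipliers} to identify the induced operator $T_{L_i}$ with multiplication by $a_i^*$); your version leans more heavily on the discrete Peter--Weyl block structure, whereas the paper's slice computation would make sense beyond the discrete setting, but for the theorem at hand both are complete and correct.
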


\begin{proof}
The ``only if'' claim is Theorem~\ref{groupalg}, so suppose that $L^\infty(\hh\G)$ has the Haagerup property, as witnessed by a net $(\Phi_i)_{i\in\mathcal I}$ of unital, completely positive, $\hh\varphi$-preserving normal maps, whose induced maps $(T_i)_{i\in\mathcal I}$ are compact operators on $L^2(\hh\G)$ with $\Phi_i\rightarrow\id_{L^\infty(\hh\QG)}$ point $\sigma$-weakly.

For each $i$, define the corresponding map $L_i$ by \eqref{KRaverage}. By Theorem \ref{KRtw}, this is the adjoint of a left multiplier $L_i$, which is ``represented'' by $a_i$, where $ a_i = (\hh\varphi\otimes\id)( (\Phi_i\otimes\id)(\hh W) \hh W^* ) \in \M (c_0(\G))$.  As $L_i$ is completely positive \cite[Proposition~4.2, Theorem~5.2]{daws} gives a unique positive functional $\mu_i\in C_0^u(\hh\G)^*$ with $a_i = (\id\otimes\mu_i)(\wW^*)$.

Write $\xi_0\in L^2(\hh\G)$ for the cyclic vector arising in the GNS-construction, so that the induced maps $T_i\in\mc B(L^2(\hh\G))$ satisfy $T_i(x\xi_0)=\Phi_i(x)\xi_0$ for $x\in L^\infty(\hh\G)$. This means that for $x,y\in L^\infty(\hh\G)$,
\begin{equation} \big( yT_ix\xi_0 \big|\xi_0 \big)
= \big( T_ix\xi_0 \big| y^*\xi_0 \big)
= \big( \Phi_i(x)\xi_0 \big| y^*\xi_0 \big)
= \hh\varphi\big( y \Phi_i(x) \big).
\end{equation}
Then, as $\hh W = \sigma(W^*)$,
\begin{equation}
a_i^* = (\id\otimes\hh\varphi)(W^*(\id\otimes\Phi_i)(W))
= (\id\otimes \omega_{\xi_0,\xi_0})
\big( W^*(1\otimes T_i)W \big).
\end{equation}
For $\eta_1,\eta_2\in L^2(\G)$ consider the rank-one operator $\theta_{\eta_1,\eta_2}=(\cdot|\eta_2)\eta_1$ on $L^2(\hh\G)=L^2(\G)$.
Then
\begin{equation} (\id\otimes \omega_{\xi_0,\xi_0})
\big( W^*(1\otimes \theta_{\eta_1,\eta_2})W \big)
= (\id\otimes\omega_{\eta_1,\xi_0})(W^*)
(\id\otimes\omega_{\xi_0,\eta_2})(W) \in c_0(\G). \end{equation}
Approximating the compact operators $T_i\in\mathcal B(L^2(\hh\G))$ by finite rank operators, we conclude that $a_i^*\in c_0(\G)$ for each $i$ (note that this could be alternatively obtained from the last statement of Theorem 5.5 (1) in \cite{KrausRuan}).

As $\G$ is discrete, $c_0(\G)^* = L^1(\G)$, and so for
$x\in c_0(\G),\ \omega\in c_0(\G)^*$,
\begin{equation}
\lim_{i\in\Ind}\ip{\omega}{a_ix}=\lim_{i\in\Ind}\ip{W^*(\id\otimes\Phi_i)(W)}{x\omega\otimes\hh\varphi}
= \ip{1\otimes 1}{x\omega \otimes \hh\varphi} = \ip{\omega}{x},
\end{equation}
since $\Phi_i\rightarrow \id_{L^\infty(\hh\QG)}$ point $\sigma$-weakly. So $a_i^*x\stackrel{i \in \Ind}{\rightarrow} x$ weakly, and similarly $xa_i^* \stackrel{i \in \Ind}{\rightarrow} x$ weakly for all $x\in c_0(\G)$. By Hahn-Banach, we can pass to a convex combination of the $(a_i)$ to obtain a bounded approximate identity for $c_0(\G)$ arising from slices of $\wW$ by states, just as at the end of the proof of Theorem \ref{equivHAP} (\ref{equivHAP:one})$\implies$(\ref{equivHAP:three}).  Theorem \ref{equivHAP} (iv)$\Longrightarrow$(i) then finishes the proof.\end{proof}

\begin{rem}
A $C^*$-algebraic version of the Haagerup approximation property for a pair $(\alg,\tau)$, where $\alg$ is a unital $C^*$-algebra and $\tau$ is a faithful tracial state on $\alg$, was introduced in \cite{Dong}. Using on one hand the fact that the $C^*$-algebraic Haagerup approximation property for $(\alg,\tau)$ passes to the analogous von Neumann algebraic property for  $\pi_{\tau}(\alg)''$ (see Lemma 4.5 in \cite{Suzuki}) and on the other the fact that the multipliers we construct in the proof of Theorem~\ref{groupalg} leave the  $C^*$-algebra $C(\widehat{\QG})$ invariant (see Remark~\ref{rem:mult_leaves_co_inv}) one can deduce  easily from the above theorem that a discrete unimodular quantum group $\QG$ has the Haagerup property if and only if the pair $(C(\widehat{\QG}), \hh\varphi)$ has Dong's $C^*$-algebraic Haagerup approximation property.

As remarked in Definition~\ref{HAPalg}, for the von Neumann algebraic Haagerup property the two natural notions of convergence, point-$\sigma$-weak on the algebra, or point-norm on the Hilbert space, agree.  For the $C^*$-algebraic Haagerup property, both \cite{Dong} and \cite{Suzuki} choose to work with point-norm convergence at the Hilbert space level, but one might also consider point-norm convergence at the algebra level; in general this will be a strictly stronger condition.  However, in our setting we do indeed get point-norm convergence in $C(\hh\G)$.  Indeed, in Theorem~\ref{groupalg} we construct multipliers $L_i$, each represented by $a_i\in c_0(\G)$, with $(a_i)$ a (contractive) approximate identity.  As argued in Remark~\ref{rem:mult_leaves_co_inv}, we see that for $\omega\in L^1(\G)$,
\begin{equation} L_i\big( (\id\otimes\omega)(\hh W) \big) = (\id\otimes\omega a_i)(\hh W)
\stackrel{i\rightarrow\infty}{\longrightarrow}
(\id\otimes\omega)(\hh W),
\end{equation}
the convergence being in norm.  As $C(\hh\G)$ is the norm closure of elements of
the form $(\id\otimes\omega)(\hh W)$, and each $L_i$ is a contraction, we conclude that $L_i(x) \rightarrow x$ for all $x\in C(\G)$, as claimed. We thank Caleb Eckhardt for an enquiry which prompted this observation.
\end{rem}

We can now record a number of example of quantum groups with the Haagerup property.

\begin{exam}[\cite{Brannan}, \cite{Brannan2}, \cite{Lemeux}]
The duals of the free orthogonal quantum groups, of the free unitary quantum groups, of the quantum automorphism groups of certain finite-dimensional $C^*$-algebras equipped with canonical traces, and of the quantum reflection groups $H_n^{s+}$ (the free wreath products $\bz_s\wr S_n^+$, see \cite{Julien}) for $n\geq 4$ and $1\leq s<\infty$ have the Haagerup property.
\end{exam}

The second corollary is related to cocycle twisted products of discrete quantum groups (studied for example in \cite{PierreLeonid}) with the Haagerup property.

\begin{cor} Let $\QG$ be a discrete unimodular quantum group and let $\Gamma$ be  a discrete abelian group
such that $C^*(\Gamma) \subset C^u(\hh\QG)$, with the inclusion intertwining the comultiplications. Let
$\sigma:\widehat{\Gamma} \times \widehat{\Gamma} \to \bt$ be a bicharacter. Then $\QG$ has the Haagerup property if and only if the twisted quantum group $\QG_{\sigma}$ has the Haagerup property.
\end{cor}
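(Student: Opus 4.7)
My plan is to reduce the corollary to Theorem~\ref{groupalgKac} via the fundamental observation that a $2$-cocycle twisting by a bicharacter leaves the underlying von Neumann algebra and the Haar state unchanged --- only the coproduct is deformed. The relevant facts about the construction of $\QG_\sigma$ should be taken from the cocycle twisting framework of \cite{PierreLeonid}: the bicharacter $\sigma:\widehat\Gamma\times\widehat\Gamma\to\bt$ corresponds, via Pontryagin duality and the Hopf $*$-inclusion $C^*(\Gamma)\subset C^u(\hh\QG)$, to a unitary $2$-cocycle $\Omega$ affiliated to $L^\infty(\hh\QG)\,\wot\,L^\infty(\hh\QG)$. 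The twisted compact quantum group $\hh\QG_\sigma$ is then defined to have $L^\infty(\hh{\QG_\sigma})=L^\infty(\hh\QG)$ as a von Neumann algebra, with deformed coproduct $\hh\Delta_\sigma(x)=\Omega\,\hh\Delta(x)\,\Omega^*$.

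The first step is to verify that $\QG_\sigma$ is again discrete unimodular (equivalently $\hh{\QG_\sigma}$ is Kac) and that the Haar state on $\hh{\QG_\sigma}$ coincides with the original Haar state $\hh\varphi$. For a bicharacter cocycle on an abelian subgroup this is standard: since $\Omega$ is a unitary implementing the twist and the Haar state of the Kac algebra $\hh\QG$ is a trace, the $\hh\Delta_\sigma$-invariance of $\hh\varphi$ follows from the $2$-cocycle identity together with the bicharacter property of $\sigma$, and the traciality persists simply because the underlying von Neumann algebra is unchanged.

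With this in hand, the proof is almost immediate. By Theorem~\ref{groupalgKac} applied to $\QG$, the Haagerup property of $\QG$ is equivalent to the Haagerup approximation property of $\bigl(L^\infty(\hh\QG),\hh\varphi\bigr)$. Applying Theorem~\ref{groupalgKac} a second time to the discrete unimodular quantum group $\QG_\sigma$, we obtain that the Haagerup property of $\QG_\sigma$ is equivalent to the Haagerup approximation property of $\bigl(L^\infty(\hh{\QG_\sigma}),\hh\varphi_\sigma\bigr)$. Since these two pairs are literally the \emph{same} von Neumann algebra equipped with the \emph{same} faithful normal tracial state, the two approximation properties coincide, and the corollary follows.

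The only genuine content is the verification of the first step --- that the twist preserves unimodularity and the Haar state. I expect this to be the main (but mild) obstacle: one must unwind the precise construction of $\QG_\sigma$ from \cite{PierreLeonid} and confirm that for bicharacter cocycles the hypotheses reduce to the standard fact that twisting a Kac algebra by a bicharacter of an abelian Hopf subalgebra yields a Kac algebra with the same trace. Once this is noted, the proof itself consists of two invocations of Theorem~\ref{groupalgKac} and a tautological identification.
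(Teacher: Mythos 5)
Your proposal is correct and follows essentially the same route as the paper: the paper's proof likewise observes that, since the Haar state of $\hh\QG$ is a trace, the results of \cite{PierreLeonid} show the twisting does not change the von Neumann algebra (with its Haar state), and then invokes Theorem~\ref{groupalgKac} on both sides. Your additional remarks on verifying that $\QG_\sigma$ remains unimodular and that $\hh\varphi$ survives the twist are exactly the content the paper delegates to \cite{PierreLeonid}.
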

\begin{proof}
As the Haar state of $\hh{\QG}$ is a trace, it follows from \cite{PierreLeonid}  that the twisting does not modify the von Neumann algebra: $L^{\infty}(\hat{\QG}) = L^{\infty}(\hat{\QG}_{\sigma})$.
The corollary now follows from Theorem~\ref{groupalgKac}.
\end{proof}

We finish the subsection by exhibiting another corollary, related to the wreath product of compact quantum groups (\cite{Julien}) and also  to the considerations which will follow in Section~7.

\begin{cor} Let $\QG$ be a discrete unimodular quantum group. Let $\QH$ denote the dual of the free wreath product
product $\widehat{\QG} \wr S_2^+$. Then $\QG$ has the Haagerup property if and only if $\QH$ has the Haagerup property.
\end{cor}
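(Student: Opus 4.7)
The plan is to combine Theorem \ref{groupalgKac} with a structural description of the free wreath product. Since $\QG$ is discrete unimodular, $\widehat{\QG}$ is of Kac type, and as $S_2^+ = S_2$ is the classical symmetric group, the free wreath product $\widehat{\QG} \wr S_2^+$ is itself of Kac type (a standard feature of Bichon's construction). Hence $\QH$ is also discrete unimodular, so by Theorem \ref{groupalgKac} the corollary reduces to showing that $L^\infty(\widehat{\QG})$ has the Haagerup approximation property if and only if $L^\infty(\widehat{\QG} \wr S_2^+)$ does.

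For the implication ``$\QH$ has the Haagerup property $\Longrightarrow$ $\QG$ has the Haagerup property'', I would exhibit $\widehat{\QG}$ as a closed quantum subgroup of $\widehat{\QG} \wr S_2^+$ in the sense of Woronowicz, via the canonical Hopf $*$-homomorphism $C^u(\widehat{\QG} \wr S_2^+) \to C^u(\widehat{\QG})$ collapsing one of the copies of $\widehat{\QG}$ (and sending the generators of $C(S_2^+) = \mathbb{C}^2$ to $1$). Dually, $\QG$ is then a closed quantum subgroup of the discrete quantum group $\QH$, and since every discrete quantum group is coamenable, Proposition \ref{HAP:subgroups} transfers the Haagerup property from $\QH$ to $\QG$.

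For the converse ``$\QG$ has the Haagerup property $\Longrightarrow$ $\QH$ has the Haagerup property'', I would invoke the structural description of $\widehat{\QG} \wr S_2^+$ as an amalgamated free product of two copies of $\widehat{\QG}$, amalgamated over the finite quantum subgroup arising from $C(S_2^+) = \mathbb{C}^2$, and then apply the stability of the Haagerup property under such amalgamated free products established in Section \ref{sec:free_prods}. The main obstacle is pinning down this amalgamated free product decomposition in a form precise enough to feed directly into those results; for this I would appeal to the explicit description of the free wreath product with $S_2^+$ due to Bichon and Lemeux, together with the fact that $S_2^+$ is classical (hence amenable), so that averaging over it does not obstruct the construction of approximating completely positive multipliers on $L^\infty(\widehat{\QH})$ from those on $L^\infty(\widehat{\QG})$.
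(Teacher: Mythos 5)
Your opening reduction is the right one: $\QH$ is again discrete unimodular, so by Theorem~\ref{groupalgKac} everything comes down to the von Neumann algebraic Haagerup approximation property of $L^\infty(\widehat{\QH})$. But both of your subsequent steps lean on quantum-group-level structure of the free wreath product that is not established, precisely because the coproduct of $\widehat{\QG}\wr S_2^+$ is twisted by the magic unitary of $S_2^+$. First, the surjection $C^u(\widehat{\QH})\to C^u(\widehat{\QG})$ you describe exhibits $\widehat{\QG}$ as a closed quantum subgroup of the \emph{compact} quantum group $\widehat{\QH}$; this is the wrong direction of morphism. To make $\QG$ a closed quantum subgroup of the discrete quantum group $\QH$ and invoke Proposition~\ref{HAP:subgroups}, you need (as in Proposition~\ref{subgroupfree}) an injective Hopf $*$-homomorphism $C^u(\widehat{\QG})\to C^u(\widehat{\QH})$ intertwining the coproducts, and the obvious candidate fails: the coproduct of a generator of the first copy of $C^u(\widehat{\QG})$ inside $C^u(\widehat{\QH})$ involves the generators of the second copy and of $C(S_2^+)$, so the copies of $C^u(\widehat{\QG})$ are not sub-bialgebras. (Note also that the Haagerup property does not pass to quotients of discrete groups, so the morphism you construct cannot do the job on its own.) Second, and for the same reason, $\widehat{\QG}\wr S_2^+$ is not an amalgamated free product of two copies of $\widehat{\QG}$ over $\bc^2$ \emph{as a quantum group}, so Proposition~\ref{amalgam} cannot be fed this decomposition; the obstacle you flag is genuine, and the appeal to amenability of $S_2$ together with ``averaging'' is not an argument.

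The repair is to exploit Theorem~\ref{groupalgKac} fully and forget the coproduct altogether. By \cite{Julien}, $C^u(\widehat{\QH})$ is generated by commuting copies of $C^u(\widehat{\QG})*C^u(\widehat{\QG})$ and $C(S_2^+)=\bc^2$, so that $C^u(\widehat{\QH})\cong\bigl(C^u(\widehat{\QG})*C^u(\widehat{\QG})\bigr)\ot\bc^2$, and the Haar state of $\widehat{\QH}$ decomposes accordingly as the free product of the two Haar states of $\widehat{\QG}$ tensored with the trace coming from $\bz_2$. Hence
\begin{equation*}
L^\infty(\widehat{\QH})\cong\bigl(L^\infty(\widehat{\QG})*L^\infty(\widehat{\QG})\bigr)\ot\bc^2
\end{equation*}
as tracial von Neumann algebras, and this algebra has the von Neumann algebraic Haagerup approximation property if and only if $L^\infty(\widehat{\QG})$ does, by \cite[Theorem~2.3]{j} (stability under free products of finite von Neumann algebras, together with the trivial behaviour under tensoring with $\bc^2$). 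Theorem~\ref{groupalgKac} then yields both implications simultaneously, with no need for a separate subgroup or amalgamation argument.
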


\begin{proof}
Consider the algebra $C^u(\widehat{\QH})$. It is generated by commuting  copies of the $C^*$-algebras $C^u(\widehat{\QG}) \star C^u(\widehat{\QG})$ and $C(S^+_2) = C(\bz_2)$.
Thus  $C^u(\widehat{\QH}) \approx (C^u(\widehat{\QG}) \star C^u(\widehat{\QG})) \ot C(\bz_2)$.
Moreover it follows from \cite{Julien} that the Haar measure of $\widehat{\QH}$ is given with respect to this decomposition by the formula
$h = h_1 \ot h_2$, where $h_1$ is the free product of Haar states of $\widehat{\QG}$ and $h_2$ is induced by the Haar
measure of $\bz_2$. It follows that
$L^{\infty}(\widehat{\QH}) = (L^{\infty}(\widehat{\QG}) \star L^{\infty}(\widehat{\QG})  )\ot \bc^2$
which has the von Neumann algebraic Haagerup property if and only if $L^{\infty}(\widehat{\QG})$ has the von Neumann algebraic Haagerup property by \cite[Theorem~2.3]{j}. Theorem~\ref{groupalgKac} ends the proof.
\end{proof}

\begin{rem}The free wreath product of a coamenable compact quantum group by $S^+_2$ is not coamenable in general.
For example, taking $G = \ZZ$, the free wreath product of $\widehat{G}=\bt$ by $S^+_2$ is a non-coamenable compact quantum group (whose fusion rules are even non-commutative, see \cite{Julien}), and yet the above corollary shows that its dual has the Haagerup property.
\end{rem}

Some more examples of permanence of the Haagerup property with respect to constructions involving discrete unimodular quantum groups are given in Propositions \ref{amalgam} and \ref{HNN}.

\subsection{The Haagerup property via convolution semigroups of states and conditionally negative definite functions}
\label{sec:semigp_conv_states}

The quantum counterpart of a conditionally negative definite function is a generating function as defined below.

\begin{deft}
A \emph{generating functional} on $\hh \QG$ is a functional $L:\Pol (\hh \QG)
\rightarrow \bc$ which is selfadjoint, vanishes at $1_{\hh \QG}$ and is conditionally negative definite, i.e.\ negative on the kernel of the counit (formally: if $a\in \Pol(\widehat{\QG})$ and $\hh\Cou(a)=0$, then $L(a^*a) \leq 0$).
\end{deft}

The following fact can be viewed as a quantum version of Sch\"onberg's correspondence and goes back to the work of Sch\"urmann (see \cite{Schurmann}). In this precise formulation it can be deduced for example from \cite[Section~8]{QSCC2}.  Indeed, the correspondence between semigroups of hermitian functions, and self-adjoint $L$ with $L(1)=0$ follows easily from one-parameter semigroup theory, and bialgebra theory, see for example the sketch in \cite[Section~2]{LS}.  The harder part is to show that the extra property of being conditionally negative definite is enough to ensure a semigroup of states.

\begin{lem} \label{convgen}
There exists a one-to-one correspondence between:
\begin{rlist}
\item convolution semigroups of states $(\mu_t)_{t\geq 0}$ on $C^u(\widehat{\QG})$;
\item generating functionals $L$ on $\widehat{\QG}$;
\end{rlist}
given by
\begin{equation}\label{e6.24}
L(a) = \lim_{t\to 0^+} \frac{\hh\Cou(a) - \mu_t(a)}{t},\qquad (a\in \Pol(\widehat{\QG}) \subset C^u(\widehat{\QG}))\end{equation}
and
\begin{equation}\label{e6.25} \mu_t (a) = \exp_{\star}(-tL) (a):= \sum_{n=0}^{\infty} \frac{(-t)^n}{n!} L^{* n} (a),\qquad (a\in \Pol(\widehat{\QG}) \subset C^u(\widehat{\QG})).\end{equation}
\end{lem}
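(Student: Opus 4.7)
The plan is to reduce everything to a block-by-block analysis using the Peter--Weyl decomposition $\Pol(\hh{\QG}) = \bigoplus_{\alpha \in \Irr_{\hh{\QG}}} \mathrm{span}\{u_{ij}^\alpha : 1 \le i,j \le n_\alpha\}$ recalled in Subsection \ref{compdiscsubsect}. A linear functional $\mu$ on $\Pol(\hh\QG)$ is determined by, and determines, a sequence of matrices $M^\alpha(\mu) \in \mathbb{M}_{n_\alpha}$ with entries $\mu(u_{ij}^\alpha)$; the coproduct formula $\hh\Delta(u_{ij}^\alpha) = \sum_k u_{ik}^\alpha \otimes u_{kj}^\alpha$ then shows that convolution translates into matrix multiplication $M^\alpha(\mu \star \nu) = M^\alpha(\mu)\, M^\alpha(\nu)$, while the counit $\hh\Cou$ corresponds to the identity matrix on each block. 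This dictionary is the basic tool in both directions.

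For (i)$\Rightarrow$(ii), given a convolution semigroup of states $(\mu_t)_{t\geq 0}$, the blockwise matrices $(M_t^\alpha)_{t\geq 0}$ form a norm-continuous one-parameter semigroup on the finite-dimensional algebra $\mathbb{M}_{n_\alpha}$, with $M_0^\alpha = I$. Standard semigroup theory in finite dimensions produces a generator $G^\alpha$ satisfying $M_t^\alpha = \exp(-tG^\alpha)$ and $t^{-1}(I - M_t^\alpha) \to G^\alpha$ in norm as $t\to 0^+$. Setting $L(u_{ij}^\alpha) := (G^\alpha)_{ij}$ and extending linearly defines $L$ on $\Pol(\hh\QG)$ and immediately yields (\ref{e6.24}). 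The required properties of $L$ then follow: $L(1) = 0$ because $\mu_t(1) = \hh\Cou(1)$ for all $t$; selfadjointness descends from the fact that $\mu_t$ is a state; and for conditional negativity, if $a\in\ker\hh\Cou$ then $\hh\Cou(a^*a) = 0$ since $\hh\Cou$ is a character, so $L(a^*a) = -\lim_{t\to 0^+} t^{-1}\mu_t(a^*a) \leq 0$.

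For (ii)$\Rightarrow$(i), I would define $\mu_t$ directly by the blockwise matrix exponentials $M_t^\alpha := \exp(-tG^\alpha)$ where $G^\alpha := [L(u_{ij}^\alpha)]$; by the dictionary above this agrees with the formal series (\ref{e6.25}), which converges in each finite-dimensional block. The semigroup property $\mu_s \star \mu_t = \mu_{s+t}$, the identity $\mu_0 = \hh\Cou$, and pointwise continuity as $t\to 0^+$ on $\Pol(\hh\QG)$ are all immediate from the corresponding properties of matrix exponentials. The main obstacle, and the heart of the lemma, is that each $\mu_t$ must be a state on $\Pol(\hh\QG)$: this positivity is the content of the quantum Sch\"onberg correspondence due to Sch\"urmann, which I would invoke as a black box from \cite[Section~8]{QSCC2}. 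That argument realises $\exp_\star(-tL)$ as a vacuum state on a Fock-type quantum Markov process built from a so-called Sch\"urmann triple (a pre-Hilbert $*$-representation of $\Pol(\hh\QG)$ together with a $1$-cocycle with respect to $\hh\Cou$) canonically associated to a conditionally negative $L$. Once each $\mu_t$ is known to be a state on $\Pol(\hh\QG)$, the universal property recalled in Subsection \ref{compdiscsubsect} extends it uniquely to a state on $C^u(\hh\QG)$, and uniqueness of the generator of a one-parameter semigroup of matrices guarantees that the two assignments between (i) and (ii) are mutually inverse.
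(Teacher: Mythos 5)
Your proposal is correct and follows essentially the same route as the paper: the paper does not write out a proof but cites one-parameter semigroup theory together with the coalgebra (Peter--Weyl block) structure for the correspondence between semigroups of hermitian functionals and selfadjoint $L$ with $L(1)=0$, and invokes Sch\"urmann's quantum Sch\"onberg correspondence (via \cite[Section~8]{QSCC2}) for the hard point that conditional negative definiteness yields \emph{states} --- exactly the two ingredients you use, with the same black box for the positivity. Your blockwise matrix-exponential bookkeeping is just the explicit coordinate form of the coalgebra argument sketched in \cite[Section~2]{LS}, and it is the same formalism the paper uses later when it introduces the matrices $L^\alpha$ and the identity $(a_s)^\alpha=\mathrm{e}^{-sL^\alpha}$.
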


It will again be useful to consider the natural basis $(u^\alpha_{ij})$ of $\Pol(\G)$.  Given a generating functional $L$, let $(L^{\alpha})_{i,j} = L(u^{\alpha}_{ij})$ so that $L^\alpha$ is a $n_\alpha\times n_\alpha$ matrix. For $s>0$, write $a_s= (\mu_s \ot \id)(\hh\Wuniv) \in \M(c_0(\QG))$, where $(\mu_t)_{t\geq 0}$ is the convolution semigroup associated to $L$. As the map $C^u(\hh\G)^*\rightarrow \M(c_0(\G)); \mu\mapsto \mc F\mu = (\mu\ot\id)(\hh\Wuniv)$ is a homomorphism, it is easy to see that the $\alpha$-component of $a_s$ satisfies
\begin{equation}\label{e6.26} (a_s)^{\alpha} = \tu{e}^{-sL^{\alpha}},\qquad (\alpha \in \Irr_{\widehat{\QG}}).
\end{equation}
As a consequence we immediately obtain the following lemma, which is crucial in the sequel.

\begin{lem}\label{lemmastronggen}
Let $L$ and $(\mu_t)_{t\geq 0}$ be as in Lemma \ref{convgen}. Fix $s>0$. The following conditions are equivalent:
\begin{rlist}
\item the operator $a_s= (\mu_s \ot \id)(\hh\Wuniv) \in \M(c_0(\QG))$ belongs to $c_0(\QG)$;
\item the family of matrices $(\tu{e}^{-sL^{\alpha}})_{\alpha \in \Irr_{\hh\G}}$ (with $L^{\alpha}$ defined as above) converges to $0$ as $\alpha$ tends to infinity.
\end{rlist}
\end{lem}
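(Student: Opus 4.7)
The plan is to reduce the equivalence to the concrete decomposition of $c_0(\QG)$ available for discrete quantum groups. Since $\hh\QG$ is compact, we have
\[
c_0(\QG) \;=\; \bigoplus_{\alpha \in \Irr_{\hh\QG}}^{c_0} \mathbb M_{n_\alpha}, \qquad \M(c_0(\QG)) \;=\; \prod_{\alpha \in \Irr_{\hh\QG}} \mathbb M_{n_\alpha},
\]
so that an element $x = (x^\alpha)_\alpha \in \M(c_0(\QG))$ lies in $c_0(\QG)$ if and only if $\|x^\alpha\|_{\mathbb M_{n_\alpha}} \to 0$ as $\alpha \to \infty$ through $\Irr_{\hh\QG}$ (that is, for each $\epsilon>0$ only finitely many $\alpha$ satisfy $\|x^\alpha\|\geq \epsilon$).

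Next I would invoke the identity \eqref{e6.26}, namely $(a_s)^\alpha = \tu{e}^{-sL^\alpha}$, which is the only genuinely computational input. As already noted in the text, this follows by combining the formula \eqref{unitcompact} for $\hh\Wuniv$, Lemma~\ref{convgen}'s expression $\mu_s = \exp_\star(-sL)$ via \eqref{e6.25}, and the elementary fact that the map $\mc F\colon C^u(\hh\QG)^*\to \M(c_0(\QG))$ defined by $\mu\mapsto (\mu\otimes\id)(\hh\Wuniv)$ carries convolution on the source to componentwise matrix multiplication on the target; at the $\alpha$-component, where $\hh\Delta(u^\alpha_{ij}) = \sum_k u^\alpha_{ik}\otimes u^\alpha_{kj}$, convolution of functionals reduces to multiplication of the matrices $L^\alpha$, so $\mu_s^\alpha = \tu{e}^{-sL^\alpha}$ in the usual matrix exponential sense.

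Feeding \eqref{e6.26} into the previous paragraph now gives the equivalence: $a_s \in c_0(\QG)$ iff $\|(a_s)^\alpha\|=\|\tu{e}^{-sL^\alpha}\|\to 0$ as $\alpha\to\infty$, which is condition (ii). There is no real obstacle here; the lemma is in essence a bookkeeping statement, rephrasing membership in $c_0(\QG)$ as a decay condition on the matrix exponentials of the generator's blocks, and all the substantive work has been done in establishing \eqref{e6.26} and the correspondence of Lemma~\ref{convgen}.
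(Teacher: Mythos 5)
Your proposal is correct and follows exactly the route the paper intends: the lemma is stated there as an immediate consequence of the identity $(a_s)^\alpha = \tu{e}^{-sL^\alpha}$ from \eqref{e6.26} together with the fact that an element $(x^\alpha)_\alpha$ of $\M(c_0(\QG))=\prod_\alpha \mathbb M_{n_\alpha}$ lies in $c_0(\QG)=\bigoplus^{c_0}_\alpha \mathbb M_{n_\alpha}$ precisely when $\|x^\alpha\|\to 0$ as $\alpha\to\infty$. Your recap of why $\mc F$ turns convolution into componentwise matrix multiplication is also the same justification the paper gives for \eqref{e6.26}.
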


\begin{deft} \label{stronglypropergen}
We call a generating functional $L$ \emph{symmetric} if the associated family of matrices $(L^{\alpha})_{\alpha \in \Irr_{\widehat{\QG}}}$ consists of self-adjoint matrices. A symmetric generating function $L$ is said to be \emph{proper} if it satisfies the following condition: for each $M>0$ there exists a finite set $F\subset \Irr_{\widehat{\QG}}$ such that for all $\alpha \in \Irr_{\widehat{\QG}} \setminus F$ we have that
\begin{equation} L^\alpha \geq M I_{n_{\alpha}}.
\end{equation}
\end{deft}

Note that the self-adjointness of the matrices $L^{\alpha}$ is equivalent to $L$ being $\hh S$-invariant, because $L(u^\alpha_{ij}) = L^\alpha_{ij}$ while $(L\circ \hh S)(u^\alpha_{ij}) = L((u^\alpha_{ji})^*) = \overline{ L^\alpha_{ji} }$.  This explains the use of the word `symmetric' in the definition. Moreover the assumption that a generating functional $L$ is $\hh S$-invariant implies immediately that each $L^{\alpha}$ is in fact a positive matrix.  This follows, as each $L^\alpha$ will be self-adjoint, and as each $\mu_s$ is a state, the norms of the matrices $(a_s)^{\alpha} = \tu{e}^{-sL^{\alpha}}$ do not exceed $1$; then observe that if $X$ is any self-adjoint matrix such that $(\tu{e}^{-sX})_{s>0}$ is a semigroup of contractions, then $X$ must be positive.  Note too that if $L$ is symmetric, then each of the states $\mu_t$ defined by (\ref{e6.25}) is $\hh S_u$-invariant.

In Proposition~\ref{prop:Rinv} we showed that the net of states $(\mu_i)_{i\in \Ind}$ appearing in Theorem~\ref{equivHAP} (\ref{equivHAP:three}) can be chosen to be $\hh R_u$-invariant.  We next show that, at least when $\QG$ is discrete, we can choose the states to be $\hh S_u$-invariant.

\begin{prop}\label{prop:Sinv}
Let $\QG$ be a discrete quantum group. Given an $\hh R_u$-invariant state $\mu\in C^u(\hh\QG)^*$, there exists an $\hh S_u$-invariant state $\nu\in C^u(\hh\QG)^*$ with
\begin{equation}\label{e6.28}
\|(\mathcal F\nu)^\alpha\|\leq\|(\mathcal F\mu)^\alpha\|,\text{ and }\|(\mathcal F\nu)^\alpha-I_{n_\alpha}\|\leq\|(\mathcal F\mu)^\alpha-I_{n_\alpha}\|,\qquad (\alpha\in\Irr_{\hh\G}).
\end{equation}
In particular if $\QG$ has the Haagerup property, then the net of states $(\mu_i)_{i\in\Ind}$ on $C^u(\widehat{\QG})$ such that  $\big((\mu_i\ot\id)(\hh\Wuniv)\big)_{i \in \Ind}$  forms an approximate identity in $c_0(\QG)$ obtained in Proposition \ref{prop:Rinv} can additionally be taken $\hh S_u$-invariant.
\end{prop}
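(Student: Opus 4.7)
My plan is to construct $\nu$ by averaging $\mu$ over the universal scaling group $(\hh\tau_{t,u})_{t\in\RR}$ of $C^u(\hh\QG)$, exploiting the factorisation $\hh S_u = \hh R_u\circ\hh\tau_{-i/2,u}$ on $\Pol(\hh\QG)$ together with the commutation $\hh R_u\circ\hh\tau_{t,u} = \hh\tau_{t,u}\circ\hh R_u$. For each $T>0$ I would set
\begin{equation}
\nu_T := \frac{1}{2T}\int_{-T}^T \mu\circ\hh\tau_{t,u}\,dt,
\end{equation}
understood as a weak*-Bochner integral of states. Since $\hh\tau_{t,u}$ is a $*$-automorphism of $C^u(\hh\QG)$ for real $t$, each $\nu_T$ is a state; and since $\mu\circ\hh R_u=\mu$, each $\nu_T$ remains $\hh R_u$-invariant. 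The desired $\nu$ will emerge as the weak*-limit of $(\nu_T)$ as $T\to\infty$.

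The crucial analysis is blockwise: on the finite-dimensional subspace of $\Pol(\hh\QG)$ spanned by the matrix coefficients of $\alpha\in\Irr_{\hh\QG}$, the scaling group is implemented by conjugation with the positive invertible matrix $F_\alpha$ coming from the Woronowicz character $f_1$, so
\begin{equation}
(\mc F\nu_T)^\alpha = \frac{1}{2T}\int_{-T}^T F_\alpha^{it}\,(\mc F\mu)^\alpha\,F_\alpha^{-it}\,dt.
\end{equation}
A standard Ces\`aro/spectral calculation in $\mathbb M_{n_\alpha}$ shows this converges in norm as $T\to\infty$ to $E_\alpha((\mc F\mu)^\alpha)$, where $E_\alpha\colon\mathbb M_{n_\alpha}\to\mathbb M_{n_\alpha}$ is the unital completely positive conditional expectation $X\mapsto\sum_k P_k X P_k$ onto the commutant of $F_\alpha$ (the $P_k$ being its spectral projections). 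Pointwise convergence on the dense subalgebra $\Pol(\hh\QG)$, combined with the uniform bound $\|\nu_T\|\leq 1$, yields a unique weak*-limit state $\nu\in C^u(\hh\QG)^*$ satisfying $(\mc F\nu)^\alpha=E_\alpha((\mc F\mu)^\alpha)$, and $\hh R_u$-invariance passes to $\nu$ from the $\nu_T$.

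To upgrade this to $\hh S_u$-invariance, the Ces\`aro shift estimate $\|\nu_T\circ\hh\tau_{s,u}-\nu_T\|\leq|s|/T$ gives $\nu\circ\hh\tau_{s,u}=\nu$ for every real $s$. Because $\hh\tau_{z,u}$ acts on the $\alpha$-block as $F_\alpha^{iz}$, which is entire in $z$, invariance on the real axis extends by uniqueness of holomorphic continuation to all of $\bc$, and specialising to $z=-i/2$ together with $\hh R_u$-invariance gives $\nu\circ\hh S_u=\nu$. The two estimates in \eqref{e6.28} are then immediate from the identity $(\mc F\nu)^\alpha=E_\alpha((\mc F\mu)^\alpha)$: a conditional expectation is contractive and unital, so $(\mc F\nu)^\alpha-I_{n_\alpha}=E_\alpha((\mc F\mu)^\alpha-I_{n_\alpha})$ and both norm inequalities follow.

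For the final assertion I would apply the construction to each $\mu_i$ supplied by Proposition~\ref{prop:Rinv}, producing $\nu_i$. The first bound in \eqref{e6.28} keeps $\mc F\nu_i$ inside $c_0(\QG)$, since $\|(\mc F\nu_i)^\alpha\|\leq\|(\mc F\mu_i)^\alpha\|\to 0$ as $\alpha\to\infty$, while the second forces $\|(\mc F\nu_i)^\alpha-I_{n_\alpha}\|\to 0$ for each fixed $\alpha\in\Irr_{\hh\QG}$ as $i\in\Ind$; Proposition~\ref{lemma:hap_dis_case} then confirms that $\big((\nu_i\ot\id)(\hh\Wuniv)\big)_{i\in\Ind}$ remains an approximate identity in $c_0(\QG)$. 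I expect the only step warranting care is the passage from real-parameter $\hh\tau$-invariance to $\hh\tau_{-i/2,u}$-invariance, but discreteness of $\QG$ reduces this to the one-line holomorphic continuation argument on each finite-dimensional block.
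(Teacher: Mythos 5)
Your argument is correct, and its skeleton is the same as the paper's: average $\mu$ over the universal scaling group, use that $\hh R_u$ commutes with each $\hh\tau^u_t$ to retain $\hh R_u$-invariance, deduce invariance under the analytic extension $\hh\tau^u_{-i/2}$, and conclude via $\hh S_u=\hh R_u\circ\hh\tau^u_{-i/2}$ on $\Pol(\hh\QG)$. The genuine difference is the averaging device. The paper slices against an invariant mean $M\in L^\infty(\RR)^*$, so $\nu$ exists immediately and is $\hh\tau^u_s$-invariant by translation-invariance of $M$; the estimates \eqref{e6.28} then follow because each $\tau^\alpha_{-t}$ is a unital isometry of $\mathbb M_{n_\alpha}$ fixing $I_{n_\alpha}$ and $M$ is a state. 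You instead take Ces\`aro averages $\frac{1}{2T}\int_{-T}^T\mu\circ\hh\tau^u_t\,dt$ and prove genuine norm convergence on each block to the conditional expectation $E_\alpha$ onto the commutant of the implementing positive matrix, extracting $\nu$ by uniform boundedness plus density of $\Pol(\hh\QG)$, and getting $\hh\tau^u_s$-invariance from the shift estimate $|s|/T$. Your route is more constructive (no invariant mean, and it yields the explicit identity $(\mc F\nu)^\alpha=E_\alpha((\mc F\mu)^\alpha)$, from which \eqref{e6.28} is immediate since $E_\alpha$ is unital and contractive), at the cost of the blockwise spectral computation; the paper's version is shorter but non-constructive. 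The analytic-continuation step you single out as delicate is indeed harmless here, exactly because for discrete $\QG$ the coefficient space of each $\alpha$ is finite-dimensional and $z\mapsto\nu(\hh\tau^u_z(a))$ is entire, so real-line invariance forces invariance at $z=-i/2$. The deduction of the final statement from \eqref{e6.28} matches the paper's.
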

\begin{proof}
Let $\mu\in C^u(\hh\QG)^*$ be an $\hh R_u$-invariant state.  Let $M\in L^\infty(\mathbb R)^*$ be an invariant mean--- so $M$ is a state, and if $f,g\in L^\infty(\mathbb R)$ and $t\in\mathbb R$ are such that $f(s)=g(s+t)$ for all $s\in \br$, then $\ip{M}{f} = \ip{M}{g}$.  Define $\nu\in C^u(\hh\QG)^*$ by
\begin{equation} \ip{\nu}{a} = \ip{M}{\big( \ip{\mu}{\hh\tau^u_t(a)} \big)_{t\in\mathbb R}} \qquad (a\in C^u(\hh\G)), \end{equation}
where $\{\hh\tau^u_t:t\in \br\}$ is the scaling automorphism group on $C^u(\hh\G)$ (see Section 9 of \cite{ku}).  As each $\hh\tau^u_t$ is a $*$-automorphism it follows that $\nu$ is a state, and by the invariance of $M$, it follows that $\nu\circ \hh\tau^u_s = \nu$ for all $s\in\mathbb R$.  We now use some elementary one-parameter group theory--- see \cite[Section~4.3]{kus} or \cite{kus3} for example.  As $\nu$ is invariant for $\{\hh\tau^u_t:t\in \br\}$ it follows that $\nu$ is analytic, and invariant for its extension to complex parameters, so in particular $\nu\circ \hh\tau^u_{-i/2} = \nu$.  As $\mu$ is $\hh R_u$-invariant, and each $\hh\tau^u_t$ commutes with $\hh R_u$, it follows that $\nu$ is $\hh R_u$-invariant.  Thus $\nu\circ \hh S_u = \nu \circ \hh R_u \circ \hh\tau^u_{-i/2} = \nu$.

By \cite[Proposition~9.1]{ku} we have that $(\hh\tau^u_t\otimes\tau_t)(\hh\Wuniv) = (\hh\Wuniv)$ for all $t\in \br$.  Let $\omega\in \ell^1(\G)$, and set
$a=(\id\ot\omega)(\hh\Wuniv)$, so
\begin{equation} \hh\tau^u_t(a) = (\id\ot\omega)\big( (\hh\tau^u_t\ot\id)(\hh\Wuniv) \big) = (\id\ot\omega)\big( (\id\ot\tau_{-t})(\hh\Wuniv) \big) = (\id\ot\omega\circ\tau_{-t})(\hh\Wuniv).
\end{equation}
The scaling group $\{\tau_t:t \in \br\}$ restricts to each matrix summand $\mathbb M_{n_\alpha}$ of $c_0(\G)$, a fact which is summarised in \cite[Section~2.2]{Tomatsu2}, for example.  Let $\{\tau^\alpha_t:t\in \br\}$ be the resulting group of automorphisms acting on $\mathbb M_{n_\alpha}$.  Let $p_\alpha:c_0(\G) \rightarrow \mathbb M_{n_\alpha}$ be the projection, so that $p_\alpha^*:\mathbb M_{n_\alpha}^* \rightarrow \ell^1(\G)$ is the inclusion.  Let $\omega = p_\alpha^*(\phi)$ for some $\phi\in \mathbb M_{n_\alpha}^*$. Then for all $\alpha \in \Irr_{\hh \G}$,
\begin{align}
\ip{\phi}{(\mc F\nu_i)^\alpha} &= \ip{\mc F\nu}{\omega}
= \ip{\nu}{(\id\ot\omega)(\hh\Wuniv)}
= \ip{M}{\big( \ip{\mu}{\hh\tau^u_t(a)} \big)_{t\in\mathbb R}} \nonumber\\
&= \ip{M}{ \big( \ip{\mu}{(\id\ot\omega\circ\tau_{-t})(\hh\Wuniv)} \big)_{t\in\mathbb R} }
= \ip{M}{\big( \ip{\mu}{(\id\ot p_\alpha^*(\phi\circ\tau^\alpha_{-t}))(\hh\Wuniv)} \big)_{t\in\mathbb R}} \nonumber\\
&= \ip{M}{\big( \ip{\mu \otimes \phi}{(\id\ot\tau^\alpha_{-t})(u^\alpha)} \big)_{t\in\mathbb R}}
= \ip{M}{\big( \ip{\phi}{\tau^\alpha_{-t}((\mc F\mu)^\alpha)} \big)_{t\in\mathbb R}}
\end{align}
where we consider $u^\alpha \in C^u(\G) \ot \mathbb M_{n_\alpha}$.  As $\phi$ was arbitrary, it follows that $\|(\mc F\nu)^\alpha\| \leq \|(\mc F\mu)^\alpha\|$.

As $\tau^\alpha_{-t}(I_{n_\alpha}) = I_{n_\alpha}$ for all $t\in \br$, it follows that
\begin{equation}\ip{\phi}{(\mc F\nu)^\alpha - I_{n_\alpha}}
= \ip{M}{\big( \ip{\phi}{\tau^\alpha_{-t}((\mc F\mu)^\alpha - I_{n_\alpha})} \big)_{t\in\mathbb R}}
\end{equation}
Hence $\| (\mc F\nu)^\alpha - I_{n_\alpha} \|\leq\|(\mathcal F\mu)^\alpha-I_{n_\alpha}\|$, establishing (\ref{e6.28}).

When $\G$ has the Haagerup property let $(\mu_i)_{i\in \Ind}$ be a net of $\hh R_u$-invariant states, as given by Proposition~\ref{prop:Rinv} combined with Proposition~\ref{lemma:hap_dis_case}.  Thus, for each fixed $i\in \Ind$, we have that $\|(\mc F\mu_i)^\alpha\|\rightarrow 0$ as $\alpha\rightarrow\infty$, and for each fixed $\alpha\in \Irr_{\hh \G}$, we have that $\| (\mc F\mu_i)^\alpha - I_{n_\alpha}\|\stackrel{i\in \Ind}{\rightarrow} 0$.  For each $i$, form $\nu_i$ from $\mu_i$ as above. Then the net $(\nu_i)$ will satisfy the same limiting conditions.
\end{proof}

\begin{tw}\label{generators}
Let $\QG$ be a discrete quantum group.  The following are equivalent:
\begin{rlist}
\item $\QG$ has the Haagerup property;
\item there exists a convolution semigroup of states $(\mu_t)_{t\geq 0}$ on $C_0^u(\widehat{\QG})$ such that each $a_t:=(\mu_t \ot \id)(\hh\Wuniv)$  is an element of $c_0(\QG)$, and $a_t$ tend strictly to $1$ as $t\to 0^+$;
\item $\widehat{\QG}$ admits a symmetric proper generating functional.
\end{rlist}
When these conditions hold, the semigroup of states in (ii) can additionally be chosen to be $\hh S_u$-invariant.
\end{tw}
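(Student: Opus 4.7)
The plan is to close the cycle (ii) $\Rightarrow$ (i) $\Rightarrow$ (iii) $\Rightarrow$ (ii), and then read off the additional $\hh S_u$-invariance from the construction. The easy direction (ii) $\Rightarrow$ (i) is Proposition \ref{convsemimplyHAP}: writing $a_t=(\mu_t\otimes\id)(\hh\Wuniv)$ and using $\wW=\sigma(\hh\Wuniv^*)$, we have $(\id\otimes\mu_t)(\wW)=a_t^*$, and $a_t^*$ converges strictly to $1$ if and only if $a_t$ does, so (ii) is precisely the hypothesis of that proposition.

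For (iii) $\Rightarrow$ (ii), I start from a symmetric proper generating functional $L$ and apply Lemma \ref{convgen} to obtain a convolution semigroup of states $(\mu_t)_{t\geq 0}$ on $C^u(\hh\QG)$. Identity \eqref{e6.26} gives $(\mc F\mu_t)^\alpha=e^{-tL^\alpha}$. Each $L^\alpha$ is positive semidefinite (as noted after Definition \ref{stronglypropergen}), so $\|e^{-tL^\alpha}\|=e^{-t\lambda_{\min}(L^\alpha)}$. For fixed $M>0$, properness supplies a finite $F\subset\Irr_{\hh\QG}$ with $L^\alpha\geq M I_{n_\alpha}$ for $\alpha\notin F$, so $\|(\mc F\mu_t)^\alpha\|\leq e^{-tM}$ outside $F$, and Lemma \ref{lemmastronggen} yields $a_t\in c_0(\QG)$. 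Strict convergence $a_t\to 1$ as $t\to 0^+$ follows from $\|a_t\|\leq 1$ together with $e^{-tL^\alpha}\to I_{n_\alpha}$ on each individual matrix block and a routine density argument.

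The main implication is (i) $\Rightarrow$ (iii). By Proposition \ref{lemma:hap_dis_case} and Proposition \ref{prop:Sinv}, I choose a sequence of $\hh S_u$-invariant states $(\mu_n)_{n\in\bn}$ on $C^u(\hh\QG)$ with $((\mc F\mu_n)^\alpha)_\alpha\in c_0$ for each $n$ and $(\mc F\mu_n)^\alpha\to I_{n_\alpha}$ for each $\alpha$. Enumerate $\Irr_{\hh\QG}=\{\alpha_0,\alpha_1,\ldots\}$ (countable by second-countability of $\QG$) and, by a diagonal extraction, pass to a subsequence so that $\|(\mc F\mu_n)^{\alpha_k}-I_{n_{\alpha_k}}\|\leq 2^{-n}$ whenever $k\leq n$. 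Define $L\colon\Pol(\hh\QG)\to\bc$ by
\begin{equation*}
L(u^{\alpha_k}_{ij}) = \sum_{n=1}^{\infty}(\hh\epsilon_u - \mu_n)(u^{\alpha_k}_{ij}),
\end{equation*}
extended by linearity; the tail ($n\geq k$) is absolutely summable by the subsequence choice, while the head is a finite sum. Each $\hh\epsilon_u-\mu_n$ is a generating functional --- selfadjoint, vanishing at $1$, and conditionally negative since $\hh\epsilon_u(a)=0$ forces $(\hh\epsilon_u-\mu_n)(a^*a)=-\mu_n(a^*a)\leq 0$ --- and these properties are preserved under the pointwise convergent sum, so $L$ is a generating functional. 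The $\hh S_u$-invariance of each $\mu_n$ makes each $L^\alpha=\sum_n(I_{n_\alpha}-(\mc F\mu_n)^\alpha)$ self-adjoint, so $L$ is symmetric. For properness, fix $M>0$ and choose $N\geq 2M$. For each $n\leq N$ the set $E_n=\{\alpha:\|(\mc F\mu_n)^\alpha\|>1/2\}$ is finite; for $\alpha$ outside the finite union $\bigcup_{n\leq N}E_n$, self-adjointness with norm $\leq 1/2$ gives $I_{n_\alpha}-(\mc F\mu_n)^\alpha\geq\tfrac12 I_{n_\alpha}$, whence $L^\alpha\geq(N/2) I_{n_\alpha}\geq M I_{n_\alpha}$.

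The additional $\hh S_u$-invariance of the semigroup in (ii) follows automatically from this route: the symmetric $L$ constructed above produces an $\hh S_u$-invariant semigroup through Lemma \ref{convgen}, as noted in the commentary following Definition \ref{stronglypropergen}. The hardest step is (i) $\Rightarrow$ (iii), where there is genuine tension between making the defining series for $L$ converge on every polynomial coefficient (suggesting rapid decay of the summands) while also forcing $L^\alpha$ to grow unboundedly with $\alpha$ (suggesting divergence of the summands); the subsequence trick, which is available thanks to countability of $\Irr_{\hh\QG}$, is precisely what resolves this tension.
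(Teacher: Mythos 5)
Your proof is correct and follows essentially the same route as the paper: the same cycle of implications, the same use of Lemma \ref{convgen}, Lemma \ref{lemmastronggen}, Propositions \ref{convsemimplyHAP} and \ref{prop:Sinv}, and the same key construction $L=\sum_n(\hh\epsilon_u-\mu_n)$ for (i)$\Rightarrow$(iii). The only (harmless) difference is bookkeeping: the paper uses weights $\beta_n\nearrow\infty$ with $\sum_n\beta_n\epsilon_n<\infty$ and gets properness from a single large $\beta_n$, whereas you use unit weights after a diagonal extraction and accumulate properness by summing $N$ terms each bounded below by $\tfrac12 I_{n_\alpha}$.
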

\begin{proof}
(iii)$\Longrightarrow$ (ii):
If $L$ is a symmetric proper generating functional on $\hh\G$, then condition (ii) in Lemma \ref{lemmastronggen} is satisfied so that the operators $a_t$ arising from the corresponding semigroup of states $(\mu_t)_{t\geq 0}$ given by Lemma \ref{convgen} lie in $c_0(\G)$. As $\QG$ is discrete, the strict convergence in $c_0(\QG)$ is the same as  convergence of the individual entries of the corresponding matrices. For each fixed $\alpha$, (\ref{e6.26}) gives
 $(a_t)^\alpha = \tu{e}^{-tL^\alpha}$, so that $(a_t)^\alpha \rightarrow I_{n_\alpha}$ as
$t\rightarrow 0^+$ as required.  Further, as $L$ is symmetric, the states $(\mu_t)_{t\geq 0}$ are $\hh S_u$-invariant.

(ii)$\Longrightarrow$ (i):
Follows from Proposition~\ref{convsemimplyHAP}.

(i)$\Longrightarrow$ (iii):
Choose $(F_n)_{n=1}^{\infty}$, an increasing sequence of finite subsets of $\Irr_{\widehat{\QG}}$ such that $\bigcup_{n=1}^{\infty} F_n = \Irr_{\widehat{\QG}}$, a sequence $(\epsilon_n)_{n=1}^{\infty}$ of positive numbers tending to $0$ and a sequence $(\beta_n)_{n=1}^{\infty}$ of positive numbers increasing to infinity such that $\sum_{n=1}^{\infty} \beta_n \epsilon_n <\infty$.  For each $n\in\bn$ use Proposition \ref{prop:Sinv} to find an $\hh S_u$-invariant state $\mu_n$ such that $\mc F\mu_n \in c_0(\G)$ satisfies  \begin{equation}\|I_{n_{\alpha}} -(\mathcal{F} \mu_n)^{\alpha}\| \leq \epsilon_n\qquad (\alpha\in F_n). \label{eq:mattfour}
\end{equation}
Each matrix $(\mathcal{F} \mu_n)^{\alpha}$ is contractive, and as $\mu_n$ is $\hh S_u$-invariant, it is also self-adjoint.

Define $L:\Pol(\widehat{\QG})\to \bc$ by
\begin{equation}\label{e6.34}
 L = \sum_{n=1}^{\infty} \beta_n (\Cou - \mu_n),
\end{equation}
with the convergence understood pointwise. We claim that $L$ is a (well-defined) symmetric proper generating functional.
Note first that for any $\alpha \in \Irr_{\widehat{\QG}}$, we have that
\begin{equation}\label{e6.35} \big( L(u_{ij}^{\alpha}) \big)_{i,j} = \sum_{n=1}^{\infty} \beta_n (I_{n_{\alpha}} - (\mathcal{F}\mu_n)^{\alpha})
\end{equation}
and the convergence of this sum is guaranteed by the fact that there exists $N\in \bn$ such that $\alpha \in F_n$ for all $n\geq N$, and using
(\ref{eq:mattfour}).  Hence the sum in (\ref{e6.34}) does converges pointwise.  As $L$ is a sum of self-adjoint functionals and further, on the kernel of the counit it is a sum of states multiplied by non-positive scalar coefficients, it is a generating functional. Hence it remains to show that $L$ is symmetric and proper. Observe that for each $\alpha \in \Irr_{\widehat{\QG}}$ and $n \in \bn$ we have that
$L^{\alpha} \geq \beta_n (I_{n_{\alpha}} - (\mc F\mu_n)^{\alpha})$. Thus it suffices for a given $M>0$ to choose $n\in \bn$ so that $\beta_n >2M$ and note that as  $\mathcal{F} \mu_n \in c_0(\QG)$, there exists a finite set $F\subset \Irr_{\widehat{\QG}}$ such that for $\alpha \in \Irr_{\widehat{\QG}} \setminus F$ we have that $\|(\mathcal{F}\mu_n)^{\alpha}\| \leq \frac{1}{2}$, so also $ I_{n_{\alpha}} - (\mathcal{F}\mu_n)^{\alpha} \geq \frac{1}{2} I_{n_{\alpha}}$ (recall that the matrix $(\mathcal{F}\mu_n)^{\alpha}$ is self-adjoint). Hence for $\alpha \in \Irr_{\widehat{\QG}} \setminus F$,
\begin{equation} L^{\alpha} \geq 2M \frac{1}{2} I_{n_{\alpha}},
\end{equation}
as required.
\end{proof}

The following concept generalises cocycles for unitary representations of classical discrete groups.

\begin{deft}
Let $\QG$ be a discrete quantum group, let $\Hil$ be a Hilbert space and let $\pi:C_u(\widehat{\QG})\to \mc B(\Hil)$ be a (unital) representation. A map $c:\Pol(\widehat{\QG})\to \Hil$ is called a \emph{cocycle} for $\pi$ if
\begin{equation} c(ab) = \pi(a) c(b) + c(a)\hat{\Cou}(b),\qquad (a,b \in \Pol(\widehat{\QG})).
\end{equation}
More generally a map $c:\Pol(\widehat{\QG})\to \Hil$ is said to be a cocycle on $\QG$ if there exists a representation $\pi:C_u(\widehat{\QG})\to \mc B(\Hil)$ such that $c$ is a cocycle for $\pi$.
\end{deft}

A cocycle determines a family of matrices $(c^{\alpha})_{\alpha \in \Irr_{\hh\G}}$ with entries in $\Hil$ given by
\begin{equation} c^{\alpha}=  (c(u^{\alpha}_{ij}))_{i,j=1}^{n_{\alpha}} \in M_{n_{\alpha}}(\Hil)\cong \mc B(\bc^{n_{\alpha}}, \bc^{n_{\alpha}} \ot \Hil)
\qquad (\alpha\in\Irr_{\hh\G}),
\end{equation}
so that we view each $c^{\alpha}$ as an operator between Hilbert spaces.

The following definitions are due to Vergnioux. Again, they extend classical notions for cocycles on discrete groups.

\begin{deft}
Let $\QG$ be a discrete quantum group and let $c:\Pol(\widehat{\QG})\to \Hil$ be a cocycle on $\QG$. Then $c$ is said to be \emph{bounded} if the family of operators  $(c^{\alpha})_{\alpha \in \Irr_{\widehat{\QG}}}$ is bounded; it is said to be \emph{proper} if for each $M>0$ there exists a finite set $F\subset \Irr_{\hh\G}$ such that
\begin{equation} (c^\alpha)^*c^{\alpha} \geq M I_{n_{\alpha}}
\qquad (\alpha \in \Irr_{\hh\G} \setminus F).
\end{equation}
Finally $c$ is said to be \emph{real} if
\begin{equation}\label{e6.40}
( c(a) | c(b) ) = ( c(\widehat{S_u}(b^*)) | c(\widehat{S_u}(a)^*) )
\qquad (a,b\in \Pol(\widehat{\QG})).
\end{equation}
As $\widehat{\Cou} = \widehat{\Cou} \circ \widehat{S_u}$, it is enough to verify (\ref{e6.40}) for $a,b \in \Pol (\widehat{\QG}) \cap \tu{Ker}(\widehat{\epsilon})$.
\end{deft}

The next result sets out the connection between (real) cocycles on $\QG$ and generating functionals on $\widehat{\QG}$.

\begin{prop}[\cite{QSCC1}, \cite{Kyed}]                 \label{L versus c}
Let $\QG$ be a discrete quantum group. If $L$ is a generating functional on $\widehat{\QG}$ then there exists a cocycle $c$ on $\QG$  such that
\begin{equation}  \label{Lcocycle}
( c(a) | c(b) ) = - L(b^*a) + \widehat{\Cou}(a) \overline{L(b)} + L(a) \overline{\widehat{\Cou}(b)} \qquad (a,b \in \Pol (\widehat{\QG})). \end{equation}
On the other hand if $c$ is a real cocycle on $\QG$, then one obtains a generating functional $L$ on $\hh\QG$ such that (\ref{Lcocycle}) holds, by defining
\begin{equation} L(a) = \frac{1}{2} \sum_{i=1}^n\big( c(a_{(1,i)}) \big| c(\hh S_u(a_{(2,i)}^*)) \big), \label{Verg_form_L} \end{equation}
where $a\in\Pol(\hh\G)$ has $\hh\Delta(a)=\sum_{i=1}^na_{(1,i)}\otimes a_{(2,i)}$.  \end{prop}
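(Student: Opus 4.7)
My plan is to treat the two implications separately, using a Sch\"urmann-type GNS construction in the first direction, and a direct computation in Sweedler notation in the second.

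\textbf{From $L$ to $c$.} On $\Pol(\hh\QG) \cap \Ker\hh\Cou$, define the sesquilinear form $\langle b\,|\,b'\rangle := -L((b')^*b)$; this is positive semidefinite precisely because $L$ is conditionally negative on $\Ker\hh\Cou$. Quotient by the null space and complete to a Hilbert space $\Hil$. Define $c:\Pol(\hh\QG) \to \Hil$ by $c(a) := [a - \hh\Cou(a)\cdot 1]$ and $\pi(a)[b] := [ab - \hh\Cou(b)\,a]$ on the dense pre-Hilbert subspace (checking that $ab - \hh\Cou(b)a \in \Ker\hh\Cou$ is immediate). A direct computation using selfadjointness of $L$ and $L(1)=0$ shows that $\pi$ is a $^*$-representation of $\Pol(\hh\QG)$ and that $c$ satisfies the cocycle identity. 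Expanding $\langle c(a)\,|\,c(b)\rangle$ and applying $L(b^*) = \overline{L(b)}$ recovers (\ref{Lcocycle}). The extension of $\pi$ to $C^u(\hh\QG)$ required by the definition of a cocycle on $\QG$ uses the boundedness of $\pi$, which can in turn be extracted from the Sch\"onberg-type description of $L$ as the generator of a convolution semigroup of states via Lemma~\ref{convgen}.

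\textbf{From $c$ to $L$.} Suppose $c$ is a real cocycle and define $L$ by (\ref{Verg_form_L}). Applying the cocycle identity to $1 = 1 \cdot 1$ forces $c(1) = 0$, so $L(1) = 0$. Selfadjointness $L(a^*) = \overline{L(a)}$ is extracted from the reality condition (\ref{e6.40}) together with the anti-multiplicativity of $\hh S_u$ and the identity $(\hh S_u \otimes \hh S_u)\hh\Delta = \sigma\hh\Delta\hh S_u$, where $\sigma$ denotes the flip. For conditional negativity, take $a \in \Ker\hh\Cou$, expand $\hh\Delta(a^*a) = \hh\Delta(a^*)\hh\Delta(a)$ in Sweedler notation, substitute into (\ref{Verg_form_L}), and apply the cocycle identity to each factor $c$ of a product; then use the counital identities $\sum\hh\Cou(a_{(1)})a_{(2)} = a = \sum a_{(1)}\hh\Cou(a_{(2)})$ together with the reality condition (\ref{e6.40}) to pair the remaining terms. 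The expression collapses to $-\|c(a)\|^2 \leq 0$. Finally, (\ref{Lcocycle}) for general $a,b$ follows by an analogous polarised calculation.

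\textbf{Main obstacle.} The principal technical difficulty is the proof of conditional negativity of $L$ in the second direction. While one expects $L(a^*a) = -\|c(a)\|^2$ for $a \in \Ker\hh\Cou$ heuristically --- both sides should measure the squared norm of the \emph{infinitesimal} cocycle --- matching them requires deploying the reality condition (\ref{e6.40}) at the right moment to cancel the off-diagonal contributions produced by the antipode $\hh S_u$ inside (\ref{Verg_form_L}). A secondary obstacle in the first direction is the passage from a $^*$-representation of $\Pol(\hh\QG)$ to one of $C^u(\hh\QG)$, which goes beyond mere positivity of the associated sesquilinear form and really uses the full Sch\"urmann-triple structure available for a generating functional.
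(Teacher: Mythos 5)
Your proposal is correct and follows essentially the same route as the paper, which simply cites the GNS-type construction for $L|_{\Ker \widehat{\Cou}}$ from \cite[Section~6]{QSCC1} and the Vergnioux formula (Theorem~4.6 of \cite{Kyed}) for the converse direction. The only cosmetic difference is that the extension of $\pi$ from $\Pol(\widehat{\QG})$ to $C^u(\widehat{\QG})$ does not require the convolution-semigroup machinery you invoke: every $*$-representation of $\Pol(\widehat{\QG})$ by adjointable operators on a pre-Hilbert space is automatically bounded, since the generators are entries of unitary corepresentation matrices (cf.\ \cite[Lemma~4.2]{DK}, as quoted in Subsection~1.3), and $C^u(\widehat{\QG})$ is the enveloping $C^*$-algebra of $\Pol(\widehat{\QG})$.
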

\begin{proof}
The construction of $c$ follows from a GNS type construction for $L|_{\tu{Ker}\, \widehat{\Cou}}$, see for example \cite[Section~6]{QSCC1} (note that a different linearity convention for scalar produces is used in \cite{QSCC1}). The construction of (\ref{Verg_form_L}) is Theorem 4.6 in \cite{Kyed}, attributed in that paper to Vergnioux.
\end{proof}

Note that if a generating functional $L$ on $\widehat{\QG}$ and a cocycle $c$ on $\G$ are related via the formula \eqref{Lcocycle}, then it is easily checked that $c$ is real if and only if $L$ is invariant under $\widehat{S}_u$ on $(\tu{Ker}\, \widehat{\epsilon}) ^2=\{b^*a:a,b\in\tu{Ker}\, \widehat{\epsilon}\}$. The following proposition shows that in fact if $c$ is real, and $L$ is constructed according to the formula \eqref{Verg_form_L}, then $L=L\circ \widehat{S}_u$.

\begin{prop}\label{real_cocycle_gives_S_inv}
Let $c$ be a real cocycle on $\QG$ and $L$ be the generating functional on $\Pol(\widehat{\G})$ given by~\eqref{Verg_form_L}. Then $L \circ \hh S_u= L$.
\end{prop}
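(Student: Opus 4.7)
The plan is to compute $L(\hh S_u(a))$ directly from the defining formula \eqref{Verg_form_L}, apply the link \eqref{Lcocycle} between $c$ and $L$ (which holds, thanks to the reality of $c$, by Proposition~\ref{L versus c}), and then finish via the Hopf $\ast$-algebra axioms of $\Pol(\hh\G)$.

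First I would rewrite $L(\hh S_u(a))$ in a convenient form. Since $\hh S_u$ is anti-comultiplicative, $\hh\Delta(\hh S_u(a))=\sum_i \hh S_u(a_{(2,i)})\otimes\hh S_u(a_{(1,i)})$, where $\hh\Delta(a)=\sum_i a_{(1,i)}\otimes a_{(2,i)}$. Combining this with the Hopf $\ast$-algebra identity $\hh S_u(y^*)=\hh S_u^{-1}(y)^*$ (applied with $y=\hh S_u(a_{(1,i)})$, which gives $\hh S_u(\hh S_u(a_{(1,i)})^*)=a_{(1,i)}^*$), formula \eqref{Verg_form_L} rearranges to
\begin{equation*}
L(\hh S_u(a)) \;=\; \tfrac{1}{2}\sum_i \bigl(\, c(\hh S_u(a_{(2,i)})) \,\big|\, c(a_{(1,i)}^*) \,\bigr).
\end{equation*}

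Next I would apply \eqref{Lcocycle} to each inner product on the right, taking $u=\hh S_u(a_{(2,i)})$ and $v=a_{(1,i)}^*$, and simplify using $\hh\Cou\circ\hh S_u=\hh\Cou$ together with the fact that $\hh\Cou$ is a $\ast$-character. This splits the right-hand side into three pieces. One piece is $-L\bigl(\sum_i a_{(1,i)}\hh S_u(a_{(2,i)})\bigr)$, which vanishes by the antipode defining identity $\sum_i a_{(1,i)}\hh S_u(a_{(2,i)}) = \hh\Cou(a)\cdot 1$ combined with $L(1)=0$. A second piece reduces to $L(a)$ via the counit axiom and the self-adjointness identity $\overline{L(x^*)}=L(x)$. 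The third piece collapses to $L(\hh S_u(a))$ by another application of the counit axiom. Taken together these contributions give $L(\hh S_u(a)) = \tfrac{1}{2}L(a) + \tfrac{1}{2}L(\hh S_u(a))$, whence $L\circ\hh S_u = L$.

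The only real subtlety is the opening step, which requires careful book-keeping of how $\hh S_u$ and the $\ast$-operation interact on $\Pol(\hh\G)$ when one passes to $\hh\Delta(\hh S_u(a))$; once the clean symmetric formula above is in place, the rest is essentially automatic Hopf-algebraic manipulation. The reality of the cocycle is used only implicitly, through the validity of \eqref{Lcocycle} supplied by Proposition~\ref{L versus c}, and is not needed explicitly in any subsequent step.
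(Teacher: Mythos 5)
Your proof is correct and follows essentially the same route as the paper: expand the Vergnioux formula via the polarisation identity \eqref{Lcocycle}, kill one term with the antipode axiom and $L(1)=0$, and collapse the other two with the counit axiom and self-adjointness of $L$. The only cosmetic difference is that you evaluate \eqref{Verg_form_L} at $\hh S_u(a)$ (paying for it with the anti-comultiplicativity book-keeping), whereas the paper evaluates at $a$ and arrives at the equivalent identity $2L(a)=L(a)+L(\hh S_u^{-1}(a))$.
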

\begin{proof}
Note that $c$ and $L$ satisfy \eqref{Lcocycle}.
Fix $a \in \Pol(\hh \G)$ and write $\hh\Delta(a)=\sum_{i=1}^na_{(1,i)}\otimes a_{(2,i)}$.  Then, by (\ref{Verg_form_L}) and (\ref{Lcocycle}),
\begin{align} 2L(a) & = \sum_{i=1}^n\big( c(a_{(1,i)}) \big| c(\hh S_u(a_{(2,i)}^*)) \big) \nonumber\\
&= \sum_{i=1}^n\Big(-L\big( \hh S_u(a_{(2,i)}^*)^* a_{(1,i)}\big) + \hh{\Cou}(a_{(1,i)})
\overline{ L(\hh S_u(a_{(2,i)}^*)) } + L(a_{(1,i)})
\overline{ \hh{\Cou}(\hh S_u(a_{(2,i)}^*)) } \Big).
\end{align}
Recalling that $\hh S_u(b^*)^*=\hh S_u^{-1}(b)$ for $b\in\Pol(\hh\G)$, and that $\hh\Cou=\hh\Cou\circ\hh S_u$, this gives
\begin{equation}\label{e6.44}
2L(a)=\sum_{i=1}^n\Big(-L(\hh S_u^{-1}(a_{(2,i)})a_{(1,i)})+\hh\Cou(a_{(1,i)})L(\hh S_u^{-1}(a_{(2,i)}))+L(a_{(1,i)})\hh\Cou(a_{(2,i)})\Big).
\end{equation}
As $\Pol(\G)$ is a Hopf $*$-algebra,
\begin{equation}\label{e6.45}
\hh{\Cou}(a)1 = \sum_{i=1}^n\hh S_u(a_{(1,i)}) a_{(2,i)}=
\sum_{i=1}^n\hh S_u^{-1}(a_{(2,i)}) a_{(1,i)}
\end{equation}
and
\begin{equation}\label{e6.46}
\sum_{i=1}^n\hh{\Cou}(a_{(1,i)}) a_{(2,i)} = a = \sum_{i=1}^na_{(1,i)} \hh{\Cou}(a_{(2,i)})
\end{equation}
Combining (\ref{e6.45}) and (\ref{e6.46}) with (\ref{e6.44}) gives
\begin{equation} 2L(a) = -L\big( \hh{\Cou}(a)1 \big) + L(\hh S_u^{-1}(a)) + L(a)
= L(\hh S_u^{-1}(a)) + L(a),
\end{equation}
as $L(1)=0$. Thus it follows that $L\circ \hh S_u = L$, as claimed.
\end{proof}

The next result characterises the Haagerup property for a discrete  quantum group $\QG$ via the existence of a proper real cocycle on $\QG$. As explained in the introduction, it should be thought of as the counterpart of the characterisation of the Haagerup property of a discrete group $\Gamma$ via the existence of the proper affine isometric action on a real Hilbert space. We thank Roland Vergnioux for the clarifying remarks which led us to this result.

\begin{tw}        \label{HAP-cocycle}
Let $\QG$ be a discrete quantum group. Then  $\QG$ has the Haagerup property if and only if it admits a proper real cocycle.
\end{tw}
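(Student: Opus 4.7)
The plan is to reduce the theorem to Theorem~\ref{generators} and Proposition~\ref{L versus c}: having the Haagerup property has already been shown equivalent to the existence of a symmetric proper generating functional $L$ on $\hh\QG$, and Proposition~\ref{L versus c} provides a correspondence between such $L$ and cocycles $c$ on $\QG$. The task is therefore to pair up symmetric proper generating functionals with real proper cocycles.

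The crucial computation is the matrix identity
\begin{equation*}
(c^\alpha)^* c^\alpha = 2 L^\alpha, \qquad (\alpha \in \Irr_{\hh\QG}),
\end{equation*}
which I will establish whenever $L$ and $c$ are related by \eqref{Lcocycle} and $L$ is symmetric. To prove this, specialise \eqref{Lcocycle} to $a = u^\alpha_{ij}$ and $b = u^\alpha_{il}$ and sum over $i$. Using $\hh\Cou(u^\alpha_{mn}) = \delta_{mn}$, the unitarity relation $\sum_i (u^\alpha_{il})^* u^\alpha_{ij} = \delta_{jl} 1$, and $L(1) = 0$, the right-hand side collapses to $\overline{L^\alpha_{jl}} + L^\alpha_{lj}$, which equals $2 L^\alpha_{lj}$ by self-adjointness of $L^\alpha$. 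Interpreting $c^\alpha$ as the operator $\bc^{n_\alpha} \to \bc^{n_\alpha} \otimes \Hil$ given by $c^\alpha e_j = \sum_i e_i \otimes c^\alpha_{ij}$, the left-hand side is exactly $((c^\alpha)^*c^\alpha)_{lj}$, delivering the identity. Granting this, the properness of $c$ (eventually $(c^\alpha)^*c^\alpha \geq M I_{n_\alpha}$) and properness of $L$ (eventually $L^\alpha \geq M I_{n_\alpha}$) are manifestly equivalent.

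For the direction ($\Rightarrow$): Theorem~\ref{generators} provides a symmetric proper generating functional $L$; Proposition~\ref{L versus c} supplies a cocycle $c$ satisfying \eqref{Lcocycle}; the remark immediately following Proposition~\ref{L versus c} shows that symmetry of $L$, being equivalent to $\hh S_u$-invariance on all of $\Pol(\hh\QG)$ and in particular on $(\Ker\hh\Cou)^2$, ensures $c$ is real; and properness of $c$ follows from the matrix identity. For the direction ($\Leftarrow$): starting from a proper real cocycle $c$, the second half of Proposition~\ref{L versus c} produces a generating functional $L$ satisfying \eqref{Lcocycle}, and Proposition~\ref{real_cocycle_gives_S_inv} asserts $L = L\circ \hh S_u$; combined with the self-adjointness of $L$ as a functional (built into the definition of a generating functional), this forces each $L^\alpha$ to be self-adjoint, i.e.\ $L$ is symmetric. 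The matrix identity then transfers properness from $c$ to $L$, and Theorem~\ref{generators} closes the argument.

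The only real obstacle is the indexing bookkeeping in the matrix identity: one must be careful about conventions in treating $c^\alpha$ as an operator between Hilbert spaces, keep track of complex conjugates arising from the sesquilinear inner product, and invoke hermiticity of the functional $L$ together with self-adjointness of $L^\alpha$ at the right moment to replace $\overline{L^\alpha_{jl}}$ with $L^\alpha_{lj}$. Everything else is a direct appeal to the structural results already established.
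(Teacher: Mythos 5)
Your proposal is correct and follows essentially the same route as the paper: the same matrix identity $(c^\alpha)^*c^\alpha = L^\alpha + (L^\alpha)^* = 2L^\alpha$ (the paper derives the first equality without assuming symmetry and then invokes self-adjointness of $L^\alpha$, exactly as you do), combined with Theorem~\ref{generators}, Proposition~\ref{L versus c}, the remark following it for the forward direction, and Proposition~\ref{real_cocycle_gives_S_inv} for the converse. No gaps.
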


\begin{proof}
Suppose first that we are given a generating functional $L$ on $\widehat{\QG}$ and a cocycle $c$ on $\G$ related via the formula \eqref{Lcocycle}. Fix $\alpha \in \Irr_{\widehat{\QG}}$ and compute the matrix $(c^{\alpha})^* c^{\alpha}$:
\begin{align}\big((c^{\alpha})^* c^{\alpha}\big)_{ij} & = \sum_{k=1}^{n_{\alpha}} (c(u^\alpha_{kj})| c(u^\alpha_{ki}))
=   \sum_{k=1}^{n_{\alpha}} \big( - L(u_{ki}^\alpha{}^* u_{kj}^\alpha) + \widehat{\Cou} (u_{kj}^\alpha) \overline{L(u_{ki}^\alpha)} + L (u_{kj}^\alpha) \overline{\widehat{\Cou}(u_{ki}^\alpha)} \big)
\nonumber\\&=
- L( \sum_{k=1}^{n_{\alpha}} u_{ki}^\alpha{}^* u_{kj}^\alpha) + \sum_{k=1}^{n_{\alpha}}  \big(\delta_{kj} \overline{L(u^\alpha_{ki})} + L (u^\alpha_{kj}) \delta_{ki} \big) \nonumber\\
&= - L(\delta_{ij} 1) + L(u^\alpha_{ji}{}^*) + L (u_{ij}^\alpha)
= (L^{\alpha} + (L^{\alpha})^*)_{ij}.
\end{align}

Suppose then that $\QG$ has the Haagerup property. Theorem~\ref{generators} implies that $\widehat{\QG}$ admits a proper symmetric generating functional $L$.
The first part of Proposition \ref{L versus c} implies that there exists a cocycle $c$ on $\QG$ related to $L$ via the formula \eqref{Lcocycle}. Since $L$ is symmetric, it is $\hh S_u$-invariant so certainly $\hh S_u$-invariant on $(\tu{Ker}\, \widehat{\epsilon}) ^2$. Hence, as noted after Proposition \ref{L versus c}, $c$ is real.  As $(c^{\alpha})^* c^{\alpha} = 2 L^{\alpha}$ for each $\alpha$, properness of $L$ is equivalent to properness of $c$.

Conversely, if $\QG$ admits a proper real cocycle $c$, then by the second part of Proposition \ref{L versus c} there exists a generating functional $L$ on $\widehat{\QG}$ related to  $c$ given by (\ref{Verg_form_L}). By Proposition~\ref{real_cocycle_gives_S_inv} this generating functional satisfies $L\circ\hh S_u=L$, and so by the remark after Definition~\ref{stronglypropergen}, each $L^\alpha$ is self-adjoint.  That is, $L$ is symmetric. Again, $(c^{\alpha})^* c^{\alpha} = 2 L^{\alpha}$ for each $\alpha$, and so properness of $c$ implies properness of $L$, and hence $\G$ has the Haagerup property by Theorem~\ref{generators}.
\end{proof}

We finish this subsection by proving two lemmas which will be needed in the last section of the paper.

\begin{lem} \label{lem:boundedperturbstates}
Let $\QG$ be a discrete  quantum group which has the Haagerup property. Then there exists a sequence of states
$(\mu_k)_{k\in \bn}$ on $\Pol(\hh\G)$  such that:
\begin{rlist}
\item for each $k \in \bn$ the family of matrices $((\mathcal F\mu_k)^\alpha)_{\alpha \in \Irr_{\widehat{ \QG}}}$
belongs to  $\bigoplus_{\alpha \in \Irr_{\widehat \QG}} \mathbb M_{n_\alpha}$;
\item for each $\alpha \in  \Irr_{\widehat \QG}$, the sequence $((\mathcal F\mu_k)^\alpha)_{k\in \bn}$ converges in norm
to the identity matrix in $\mathbb M_{n_\alpha}$;
\item for each $k \in \bn$ and $\alpha \in \Irr_{\widehat{\QG}}$ with $\alpha \neq 1$, we have that $\|\mu_k^\alpha\| \leq \exp(-\frac{1}{k})$.
\end{rlist}
\end{lem}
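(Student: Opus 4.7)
The idea is to build the sequence $(\mu_k)$ from a single, carefully chosen convolution semigroup of states, using the characterisation of the Haagerup property via symmetric proper generating functionals in Theorem~\ref{generators}. The only subtlety beyond Theorem~\ref{generators} is that a generating functional $L$ coming from that theorem may have some zero eigenvalues on the blocks $L^\alpha$ for small $\alpha\neq 1$; condition (iii) forces strict positivity of $\tilde L^\alpha$ for every nontrivial~$\alpha$, so one needs a bounded perturbation that adjusts these blocks without destroying properness.

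First, apply Theorem~\ref{generators} to obtain a symmetric proper generating functional $L$ on $\hh\G$. Next, introduce the auxiliary functional $L_0:=\hh\Cou-\hh h$ on $\Pol(\hh\G)$, where $\hh h$ is the Haar state of $\hh\G$. Since $\hh h$ is a state which is $\hh S_u$-invariant and satisfies $\hh h(1)=1$, the functional $L_0$ is self-adjoint, vanishes at $1$, satisfies $L_0(a^*a)=-\hh h(a^*a)\leq 0$ on $\Ker\hh\Cou$, and is $\hh S_u$-invariant, so it is a symmetric generating functional. Using $\hh h*\hh h=\hh h$ one sees moreover that $L_0^{*n}=L_0$ for $n\geq 1$, so the associated semigroup is the explicit convex combination $\exp_\star(-tL_0)=e^{-t}\hh\Cou+(1-e^{-t})\hh h$. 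Since $\hh h(u^\alpha_{ij})=0$ for $\alpha\neq 1$ and $\hh h(1)=1$, we have $L_0^\alpha=I_{n_\alpha}$ for $\alpha\neq 1$ and $L_0^1=0$.

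Set $\tilde L:=L+L_0$. This is again a symmetric generating functional, and at the level of blocks
\begin{equation}
\tilde L^1=0,\qquad \tilde L^\alpha=L^\alpha+I_{n_\alpha}\geq I_{n_\alpha}\quad (\alpha\in\Irr_{\hh\G},\ \alpha\neq 1).
\end{equation}
In particular $\tilde L^\alpha\geq L^\alpha$ for every $\alpha$, so $\tilde L$ is proper. Let $(\tilde\nu_t)_{t\geq 0}$ be the convolution semigroup of states on $C^u(\hh\G)$ associated with $\tilde L$ via Lemma~\ref{convgen} and define $\mu_k:=\tilde\nu_{1/k}$ for $k\in\bn$.

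To check the three conditions, one uses \eqref{e6.26}, which gives $(\mc F\mu_k)^\alpha=\exp(-\tilde L^\alpha/k)$. Properness of $\tilde L$ together with Lemma~\ref{lemmastronggen} yields $\mc F\mu_k\in c_0(\G)=\bigoplus_{\alpha}\mathbb M_{n_\alpha}$, which is (i). For fixed $\alpha$, continuity of the matrix exponential gives $\exp(-\tilde L^\alpha/k)\to I_{n_\alpha}$ in norm as $k\to\infty$, proving (ii). Finally, for $\alpha\neq 1$ the bound $\tilde L^\alpha\geq I_{n_\alpha}$ and the spectral theorem (each $\tilde L^\alpha$ is positive self-adjoint since $\tilde L$ is symmetric) give
\begin{equation}
\bigl\|(\mc F\mu_k)^\alpha\bigr\|=\bigl\|\exp(-\tilde L^\alpha/k)\bigr\|\leq \exp(-1/k),
\end{equation}
which is (iii). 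The main conceptual step is the additive perturbation by $L_0$: it is the shortest way to guarantee a uniform lower bound on the blocks $\tilde L^\alpha$ for $\alpha\neq 1$ while preserving both symmetry and properness.
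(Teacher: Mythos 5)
Your proof is correct, but it takes a different route from the paper's. The paper does not go through generating functionals at all: it takes a sequence of states $(\omega_k)$ directly from Proposition~\ref{lemma:hap_dis_case} (so $\mc F\omega_k\in c_0(\G)$ and $(\mc F\omega_k)^\alpha\to I_{n_\alpha}$), forms the explicit idempotent-type semigroup element $\psi_k=\exp_\star(-\tfrac1k(\hh\Cou-h))=e^{-1/k}\hh\Cou+(1-e^{-1/k})h$, and sets $\mu_k=\omega_k\star\psi_k$; since $(\mc F\psi_k)^\alpha=e^{-1/k}I_{n_\alpha}$ for $\alpha\neq 1$ and $\mc F$ is multiplicative for $\star$, condition (iii) drops out from $\|(\mc F\omega_k)^\alpha\|\leq 1$, and (i), (ii) are inherited from $\omega_k$. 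You instead invoke the harder implication (i)$\Rightarrow$(iii) of Theorem~\ref{generators} to get a symmetric proper generating functional $L$, perturb it to $\tilde L=L+(\hh\Cou-h)$ so that $\tilde L^\alpha\geq I_{n_\alpha}$ for $\alpha\neq1$ (which is precisely the content of Lemma~\ref{lem:boundedperturb}), and sample the associated semigroup at times $1/k$. Both arguments hinge on the same auxiliary object $\hh\Cou-h$, and your verification of (i)--(iii) via \eqref{e6.26}, Lemma~\ref{lemmastronggen}, positivity of the blocks $L^\alpha$ (noted after Definition~\ref{stronglypropergen}) and the spectral bound $\|e^{-\tilde L^\alpha/k}\|\leq e^{-1/k}$ is sound; there is no circularity since Theorem~\ref{generators} precedes this lemma. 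The trade-off is that the paper's argument is lighter (it needs only the multiplicativity of $\mc F$ and contractivity of state blocks), whereas yours uses the full strength of Theorem~\ref{generators} but in exchange essentially subsumes Lemma~\ref{lem:boundedperturb} and produces the $\mu_k$ as a genuine convolution semigroup evaluated at $t=1/k$, which is slightly more structure than the lemma demands.
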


\begin{proof}
The proof is based on adjusting the output of Proposition~\ref{lemma:hap_dis_case} in order to obtain the extra third condition. Choose a sequence $(\omega_k)_{k\in \bn}$ of states   on $\Pol(\widehat{\QG})$ satisfying the conditions in that proposition. Let $h$ denote the Haar state of $\hh\QG$ and put  $L=\hh\Cou - h$. It is easy to see that $L$ is a (bounded) generating functional, and moreover for each $t\geq 0$ we have that $\exp_{\star}(-tL) = \textup{e}^{-t} \hh \Cou + (1 - \textup{e}^{-t})h$.   Put $\psi_k = \exp_{\star} (-\frac{1}{k}L)$. It is easy to check that the sequence $(\mu_k)_{k\in \bn}$, where $\mu_k = \omega_k \star \psi_k$, $k\in \bn$, satisfies the required conditions (note that the norm of each matrix $\omega_k^\alpha$ is not greater then $1$, as $\omega_k$ is a state, and that $(\mathcal{F}\mu_k)^{\alpha} = (\mathcal{F}\omega_k)^{\alpha} (\mathcal{F}\psi_k)^{\alpha}$).
\end{proof}

The above lemma has a natural counterpart for generating functionals on discrete quantum groups with the Haagerup property.

\begin{lem} \label{lem:boundedperturb}
Let $\QG$ be a discrete quantum group which has the Haagerup property.
 Then $\widehat{\QG}$ admits a symmetric proper generating functional $L:\Pol(\widehat{\QG})\to \bc$ such that
for each $\alpha \in \Irr_{\widehat{\QG}}$ we have that
\begin{equation}L^{\alpha} \geq I_{n_{\alpha}}.\end{equation}
\end{lem}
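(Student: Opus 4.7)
The approach is a direct perturbation argument: take the symmetric proper generating functional supplied by Theorem~\ref{generators} and shift it by a bounded symmetric generating functional whose value on each non-trivial irreducible representation is exactly the identity matrix. This exactly parallels the proof of Lemma~\ref{lem:boundedperturbstates} but at the level of generating functionals rather than states.

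First, by Theorem~\ref{generators} the Haagerup property provides a symmetric proper generating functional $L_0:\Pol(\widehat{\QG})\to\bc$. Next, mirroring the construction in Lemma~\ref{lem:boundedperturbstates}, I would introduce
\begin{equation*}
L_1:=\widehat{\Cou}-h,
\end{equation*}
where $h$ is the Haar state of $\widehat{\QG}$. Verifying that $L_1$ is a generating functional is routine: self-adjointness is clear since $\widehat{\Cou}$ and $h$ are states, $L_1(1)=0$, and for $a\in\Pol(\widehat{\QG})$ with $\widehat{\Cou}(a)=0$ one has
\begin{equation*}
L_1(a^*a)=\widehat{\Cou}(a^*a)-h(a^*a)=|\widehat{\Cou}(a)|^2-h(a^*a)=-h(a^*a)\leq 0.
\end{equation*}
Because $h$ annihilates every matrix coefficient of a non-trivial irreducible, we compute $L_1^{\alpha}=I_{n_\alpha}$ for each non-trivial $\alpha\in\Irr_{\widehat{\QG}}$; in particular the matrices $L_1^\alpha$ are all self-adjoint, so $L_1$ is symmetric.

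Finally, I would set $L:=L_0+L_1$. The sum of two symmetric generating functionals is again a symmetric generating functional, and for each non-trivial $\alpha\in\Irr_{\widehat{\QG}}$,
\begin{equation*}
L^{\alpha}=L_0^{\alpha}+L_1^{\alpha}=L_0^{\alpha}+I_{n_\alpha}\geq I_{n_\alpha},
\end{equation*}
the inequality being valid because, as noted after Definition~\ref{stronglypropergen}, symmetry of $L_0$ forces each $L_0^{\alpha}$ to be a positive matrix. Properness of $L_0$ passes to $L$ at once since $L^{\alpha}\geq L_0^{\alpha}$ for every $\alpha$: given $M>0$, any finite exceptional set $F\subset\Irr_{\widehat{\QG}}$ that works for $L_0$ also works for $L$.

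I do not anticipate a real obstacle here: the only deep ingredient is the existence of $L_0$, which is Theorem~\ref{generators}, and everything else is a one-line verification. The trivial representation is the sole edge case (since $L(1)=0$ forces $L^{\mathbf 1}=0$); as in the definition of properness, it is harmlessly absorbed into the finite exceptional set and plays no role in the proof.
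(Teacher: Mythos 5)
Your proof is correct and is essentially identical to the paper's: the authors also take the symmetric proper generating functional from Theorem~\ref{generators} and add the bounded generating functional $\widehat{\Cou}-h$ constructed in the proof of Lemma~\ref{lem:boundedperturbstates}, relying on the facts that sums of generating functionals are generating functionals and that symmetry forces each $L_0^\alpha\geq 0$. Your remark about the trivial representation (where $L(1)=0$ makes the inequality literally fail) is a fair reading of an implicit convention in the statement and does not affect the argument.
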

\begin{proof}
Let $L_1:\Pol(\widehat{\QG})\to \bc$ be a symmetric proper generating functional and let $L$ be defined  as in the proof of Lemma \ref{lem:boundedperturbstates}. As the sum of generating functionals is a generating functional, it suffices to consider $L + L_1$.
\end{proof}

\section{Free product of discrete quantum groups with the Haagerup property has the Haagerup property}
\label{sec:free_prods}

In this  section we apply the techniques developed earlier to generalise Jolissaint's result that the Haagerup property for discrete groups is preserved under taking free products (see \cite{Jol0} or \cite{book} for two different proofs, and note that in fact the Haagerup property is also preserved under taking a free product with amalgamation over a finite subgroup).  In the case of discrete unimodular quantum groups the shortest way to this theorem is via the von Neumann algebraic Haagerup approximation property and the fact it is preserved under taking free products of finite von Neumann algebras (see \cite{j}); this method can be also used to establish the quantum version of the result, mentioned above, for the free product with amalgamation over a finite (quantum) subgroup (see Proposition~\ref{amalgam}). The proof we present in the general case is closer in spirit to the classical proof in \cite{book}; the techniques developed may also be of interest in other contexts.

Recall first the definition of the free product of discrete quantum groups, originally introduced by S.\,Wang in \cite{wang}.  For compact quantum groups $\QG_1$, $\QG_2$, the $C^*$-algebra $C^u(\QG_1) * C^u(\QG_2)$ (the usual $C^*$-algebraic free product of unital $C^*$-algebras with amalgamation over the scalars) has a natural structure of the algebra of functions on a compact quantum group, with the coproduct arising from the universal properties of the free product applied to the maps $(\iota_1 \ot \iota_1) \Com_1$ and $(\iota_2 \ot \iota_2) \Com_2$, where $\Com_1, \Com_2$ denote the respective (universal) coproducts of $\QG_1$ and $\QG_2$ and $\iota_1:C^u(\QG_1) \to  C^u(\QG_1) * C^u(\QG_2)$, $\iota_2:C^u(\QG_2) \to  C^u(\QG_1) * C^u(\QG_2)$ are the canonical injections. We call the resulting compact quantum group \emph{the dual free product} of $\QG_1$ and $\QG_2$ and denote it by $\QG_1 \widehat{*} \, \QG_2$, so that $C^u(\QG_1) * C^u(\QG_2) = C^u(\QG_1 \widehat{*}\, \QG_2)$. The following result of Wang is crucial for working with the dual free products.

\begin{tw}[Theorem 3.10 of \cite{wang}] \label{repfree}
Let $\QG_1$, $\QG_2$ be compact quantum groups. Then $\ \Pol(\QG_1 \widehat{*}\, \QG_2)= \Pol(\QG_1) * \Pol(\QG_2)$ (where on the right hand side we have the $^*$-algebraic free product of unital algebras, identifying the units) and
\begin{equation} \Irr_{\QG_1 \widehat{*}\, \QG_2} =1 \cup \{U^{\alpha_1}\tp \cdots \tp U^{\alpha_k}: k\in \bn, i(j)\in \{1,2\}, i(j)\neq i(j+1), \alpha_{j} \in \Irr_{\QG_{i(j)}}, U^{\alpha_j} \neq 1 \},\end{equation}
where $U^{\alpha_1}\tp \cdots \tp U^{\alpha_k} \in M_{n_{\alpha_1}} \ot \cdots \ot  M_{n_{\alpha_k}} \ot (\Pol(\QG_1) * \Pol(\QG_2))$,
\begin{equation} (U^{\alpha_1}\tp \cdots \tp U^{\alpha_k})_{(l_1,\ldots,l_k),(m_1,\ldots,m_k)} = u^{\alpha_1}_{l_1,m_1} \cdots u^{\alpha_k}_{l_k,m_k} .\end{equation}
The Haar state of $\QG_1 \widehat{*}\, \QG_2$ is the free product of the Haar states of $\QG_1$ and $\QG_2$.
\end{tw}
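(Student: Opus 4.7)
The plan is to treat the three assertions in order: the algebraic identification of $\Pol$, the classification of irreducibles, and the identification of the Haar state. The first is essentially formal; the main work lies in proving irreducibility and exhaustiveness of the alternating tensor products, and the key tool throughout is the Peter--Weyl orthogonality relation $h(\chi_U^* \chi_V)=\dim \operatorname{Hom}(V,U)$ combined with freeness of the Haar state.

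First I would show that $\Pol(\QG_1) * \Pol(\QG_2)$ (free product of unital $*$-algebras with identified units) carries a canonical Hopf $*$-algebra structure whose coproduct, counit and antipode are the free-product extensions of those on the factors, obtained from the universal property of the free product applied to the maps $(\iota_i\otimes\iota_i)\Com_i$, $\Cou_i$ and $\iota_i\circ S_i^{-1}$ respectively. Since $C^u(\QG_i)$ is the enveloping $C^*$-algebra of $\Pol(\QG_i)$ and the enveloping $C^*$-algebra of a free product of $*$-algebras is the $C^*$-free-product of the enveloping $C^*$-algebras, we obtain a compact quantum group with $C^u = C^u(\QG_1)*C^u(\QG_2)$ whose polynomial algebra is $\Pol(\QG_1) * \Pol(\QG_2)$, and this is $\QG_1\widehat{*}\,\QG_2$ by definition.

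Next I would identify the Haar state. Let $h_i$ denote the Haar state on $\Pol(\QG_i)$ and let $h=h_1 * h_2$ be their free product, viewed as a state on $\Pol(\QG_1)*\Pol(\QG_2)$. To verify left invariance $(\id\otimes h)\Com=h(\cdot)\,1$, by linearity it suffices to test on alternating monomials $a_1 b_1 a_2 b_2\cdots$ with $a_j\in\Pol(\QG_1)$, $b_j\in\Pol(\QG_2)$; applying $\Com=(\iota\otimes\iota)\Com_1$ or $(\iota\otimes\iota)\Com_2$ on each factor, distributing, and then applying $\id\otimes h$ and collecting factors, one reduces to the invariance of each $h_i$ together with the defining recursion of the free product state. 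Right invariance is symmetric. By uniqueness of the Haar state on a compact quantum group, $h$ is the Haar state of $\QG_1\widehat{*}\,\QG_2$.

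The heart of the proof is then the classification of irreducibles, and this is the step I expect to be the main obstacle. For each alternating word $\alpha_1,\ldots,\alpha_k$ with $\alpha_j\in\Irr_{\QG_{i(j)}}\setminus\{1\}$ and $i(j)\neq i(j+1)$, let $V=U^{\alpha_1}\tp\cdots\tp U^{\alpha_k}$. Its character is $\chi_V=\chi_{\alpha_1}\chi_{\alpha_2}\cdots\chi_{\alpha_k}$, a product of elements that lie alternately in $\Pol(\QG_1)$ and $\Pol(\QG_2)$, each belonging to $\ker \Cou_{i(j)}$ since $\alpha_j$ is non-trivial (indeed $h_{i(j)}(\chi_{\alpha_j})=\dim\operatorname{Hom}(1,U^{\alpha_j})=0$). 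Then $\chi_V^*\chi_V = \chi_{\alpha_k}^*\cdots\chi_{\alpha_1}^*\chi_{\alpha_1}\cdots\chi_{\alpha_k}$, and applying $h=h_1*h_2$ via the freeness recursion, together with $h_{i(j)}(\chi_{\alpha_j}^*\chi_{\alpha_j})=1$ (irreducibility of $U^{\alpha_j}$), collapses the word step by step to $h(1)=1$. By the Peter--Weyl orthogonality relation this forces $V$ to be irreducible. The same bookkeeping applied to $\chi_V^*\chi_{V'}$ for two distinct alternating words shows that at some stage one meets a factor $h_i(\chi_{\alpha}^*\chi_{\beta})=0$ with $\alpha\neq\beta$, or an unmatched $h_i(\chi_\alpha)=0$, so the products are mutually inequivalent. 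Finally, exhaustiveness follows because the linear span of coefficients of the listed representations exhausts all alternating monomials in the coefficient algebras, which is all of $\Pol(\QG_1)*\Pol(\QG_2)$; combined with Peter--Weyl for $\QG_1\widehat{*}\,\QG_2$, which decomposes $\Pol$ as the direct sum of coefficient spaces of irreducibles, there is no room for additional irreducible classes.
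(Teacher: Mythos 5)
The paper does not prove this statement at all: it is imported verbatim as Theorem~3.10 of Wang's paper [Wan] (together with his Theorem on the Haar state), so there is no internal proof to compare against. Your argument is, in substance, a correct reconstruction of Wang's original proof, in the same logical order (Hopf $*$-structure on the algebraic free product, then the Haar state, then the character/orthogonality computation for the irreducibles), and the key computations --- $h(\chi_V^*\chi_{V'})=\delta_{V,V'}$ via the freeness recursion, and exhaustiveness from the vector-space decomposition of the free product combined with Peter--Weyl --- are the right ones and do close up.

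Three small points to tighten. First, the antipode of the free product is not obtained from $\iota_i\circ S_i^{-1}$: since $S$ is an anti-homomorphism you should apply the universal property to the maps $S_i:\Pol(\QG_i)\to(\Pol(\QG_1)*\Pol(\QG_2))^{\op}$ (or bypass the antipode entirely by noting that the free product is spanned by coefficients of unitary corepresentations whose contragredients are again unitarisable, i.e.\ it is a CQG-algebra). Second, you write that the characters $\chi_{\alpha_j}$ lie in $\Ker\Cou_{i(j)}$; they do not (indeed $\Cou(\chi_{\alpha_j})=n_{\alpha_j}$) --- what you need, and what your parenthetical actually justifies, is $\chi_{\alpha_j}\in\Ker h_{i(j)}$. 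Third, in the invariance check for $h=h_1*h_2$ the phrase ``collecting factors'' hides the only real issue: after applying $\Delta$ to an alternating word the legs need not stay centred. The clean reduction is to take the $a_j,b_j$ to be coefficients $u^{\alpha}_{lm}$ of \emph{nontrivial} irreducibles (these span $\Ker h_i$); then every leg of $\Delta(u^{\alpha}_{lm})=\sum_p u^\alpha_{lp}\otimes u^\alpha_{pm}$ is again such a coefficient, hence centred, and freeness kills the resulting alternating words outright, giving $(\id\ot h)\Delta(x)=0=h(x)1$ with no further recursion. With these repairs the proof is complete.
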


Note that the last statement of the above theorem implies in particular that $L^{\infty}(\QG_1 \widehat{*}\, \QG_2) \cong L^{\infty}(\QG_1) *  L^{\infty}(\QG_2)$, where this time $*$ denotes the von Neumann algebraic free product (with respect to the Haar states of the respective $L^{\infty}$-algebras, see for example \cite{vdn}).

\begin{deft}
Let $\QG_1$, $\QG_2$ be discrete quantum groups. The free product of $\QG_1$ and $\QG_2$ is the discrete quantum group $\QG_1 * \QG_2$ defined by the equality
\begin{equation}\QG_1 * \QG_2 = \widehat{ \widehat{\QG}_1 \widehat{*}\, \widehat{\QG}_2}.
\end{equation}
\end{deft}

One may check  that the notion above is compatible with the notion of the free product of classical discrete groups (recall that if $\Gamma_1,\Gamma_2$ are discrete groups, then $C(\widehat{\Gamma_i})=C^*(\Gamma_i)$ and $C^*(\Gamma_1 * \Gamma_2) \cong C^*(\Gamma_1) * C^*(\Gamma_2)$).
It is also easy to observe that the free product of unimodular discrete quantum groups is unimodular (as the free product of tracial states is tracial). Finally we record the following well-known and easy observation.

\begin{prop} \label{subgroupfree}
Let $\QG_1$, $\QG_2$ be discrete quantum groups. Then both $\QG_1$ and $\QG_2$ are closed quantum subgroups of $\QG_1 * \QG_2$.
\end{prop}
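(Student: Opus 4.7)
The plan is to verify the condition for $\QH := \QG_j$ (for $j \in \{1,2\}$) being a closed quantum subgroup of $\QG := \QG_1 * \QG_2$ in the sense of Vaes; since $\QG_j$ is discrete, this is equivalent to being a closed quantum subgroup in the sense of Woronowicz by Section~6 of \cite{closedqs}, as recalled in Subsection~1.1. Thus it suffices to produce an injective normal $^*$-homomorphism $\theta_j : L^\infty(\widehat{\QG}_j) \to L^\infty(\widehat{\QG})$ which intertwines the respective comultiplications.

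By the very definition of $\QG$ we have $\widehat{\QG} = \widehat{\QG}_1 \, \widehat{*}\, \widehat{\QG}_2$, and Theorem~\ref{repfree} (more precisely, the last sentence of that theorem, together with the discussion immediately after it) identifies
\begin{equation}
L^\infty(\widehat{\QG}) \cong L^\infty(\widehat{\QG}_1) * L^\infty(\widehat{\QG}_2),
\end{equation}
where the free product on the right is the von Neumann algebraic free product taken with respect to the Haar states of $\widehat{\QG}_1$ and $\widehat{\QG}_2$. Since the Haar states of compact quantum groups are faithful on the reduced $C^*$-algebras and hence on the generated von Neumann algebras, the canonical inclusions $\theta_j : L^\infty(\widehat{\QG}_j) \hookrightarrow L^\infty(\widehat{\QG})$ are injective normal unital $^*$-homomorphisms by the standard theory of free products of von Neumann algebras.

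It remains to verify the intertwining property. I would first check this at the Hopf $^*$-algebra level: by Wang's construction of the coproduct on $C^u(\widehat{\QG}) = C^u(\widehat{\QG}_1) * C^u(\widehat{\QG}_2)$ (recalled just before Theorem~\ref{repfree}), the coproduct $\hh\Delta$ is uniquely characterised by the conditions $\hh\Delta \circ \iota_j = (\iota_j \otimes \iota_j)\circ \hh\Delta_j$ for $j=1,2$, where $\iota_j : C^u(\widehat{\QG}_j) \to C^u(\widehat{\QG})$ are the canonical inclusions and $\hh\Delta_j$ is the coproduct of $\widehat{\QG}_j$. Restricting to $\Pol(\widehat{\QG}_j) \subseteq C^u(\widehat{\QG}_j)$, whose image under $\iota_j$ lies in $\Pol(\widehat{\QG})$ by Theorem~\ref{repfree}, and then passing to the GNS representation with respect to the Haar state (which realises $L^\infty(\widehat{\QG}_j)$ as the weak closure of $\Pol(\widehat{\QG}_j)$), we see that $\theta_j$ agrees with $\iota_j$ on $\Pol(\widehat{\QG}_j)$. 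The identity $(\theta_j \otimes \theta_j)\circ \hh\Delta_j = \hh\Delta \circ \theta_j$ therefore holds on the weak-$^*$ dense subalgebra $\Pol(\widehat{\QG}_j)$, and by normality of both sides it extends to all of $L^\infty(\widehat{\QG}_j)$.

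There is essentially no conceptual obstacle here: the statement is, as advertised, a bookkeeping exercise which merely unfolds the definition of the dual free product together with Wang's description of the coproduct and Haar state. The only mildly subtle point is invoking the equivalence of the Vaes and Woronowicz notions of closed quantum subgroup in the discrete setting, for which we rely on \cite{closedqs}.
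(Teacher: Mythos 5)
Your argument is essentially identical to the paper's proof: both identify $L^\infty(\widehat{\QG_1}\,\widehat{*}\,\widehat{\QG_2})$ with the von Neumann algebraic free product $L^\infty(\widehat{\QG_1}) * L^\infty(\widehat{\QG_2})$ and observe that the canonical injection is an injective normal $^*$-homomorphism intertwining the coproducts, which is exactly the (Vaes) condition recalled in Subsection~1.1; you merely spell out the intertwining check on $\Pol(\widehat{\QG}_j)$ that the paper dismisses as "easy to check". The only cosmetic quibble is that the Vaes--Woronowicz equivalence from \cite{closedqs} is invoked via discreteness of the ambient group $\QG_1 * \QG_2$ (or compactness of $\widehat{\QG}_j$), not of the subgroup $\QG_j$ itself, though of course all groups in sight are discrete here.
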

\begin{proof}
Recall that  $L^{\infty}(\widehat{\QG_1} \widehat{*}\, \widehat{\QG_2}) \cong L^{\infty}(\widehat{\QG_1}) *  L^{\infty}(\widehat{\QG_2})$. It is easy to check that the canonical injection of $L^{\infty}(\widehat{\QG_1})$ into $L^{\infty}(\widehat{\QG_1}) *  L^{\infty}(\widehat{\QG_2})$ is a normal unital $^*$-homomorphism intertwining the respective coproducts. This means that $\QG_1$ is a closed quantum subgroup of $\QG_1 * \QG_2$.  The case of $\QG_2$ follows identically.
\end{proof}

\begin{rem}
Note that the terminology introduced here, used earlier for example in \cite{BanicaSkalski}, is different from that of \cite{wang}, where the author called the free product what we call the \emph{dual} free product of compact quantum groups. The advantage of the notation and nomenclature employed here is that it is consistent with the free product of classical discrete groups and also with the results such as the one stated above.
\end{rem}

Before we begin the proof of the main theorem of this section we need to introduce another construction: that of a \emph{c-free} (\emph{conditionally free}) product of states, introduced in \cite{BozSpe} and later studied for example in \cite{BLS}. Here we describe it only in the case of two algebras. Observe first that if $\Alg_1, \Alg_2$ are unital $^*$-algebras equipped respectively with states (normalised, hermitian, positive functionals) $\psi_1$ and $\psi_2$, then the $^*$-algebraic free product $\Alg_1 * \Alg_2$ can be identified (as a vector space) with the direct sum
\begin{equation} \bc 1 \oplus \bigoplus_{n=1}^{\infty} \bigoplus_{i(1)\neq \cdots \neq i(n)} \Alg_{i(1)}^{\circ} \ot \cdots \ot \Alg_{i(n)}^{\circ},
\end{equation}
where $i(j) \in \{1,2\}$ and $\Alg_1^{\circ} = \Ker \psi_1$, $\Alg_2^{\circ} = \Ker \psi_2$. This ensures that the following definition makes sense.

\begin{deft}
Let $\Alg_1, \Alg_2$ be unital $^*$-algebras equipped respectively with states $\psi_1$ and $\psi_2$.
Let $\phi_1$ and $\phi_2$ be two further states respectively on $\Alg_1$ and on $\Alg_2$. The conditional free product of $\phi_1$ and $\phi_2$ is the functional $\omega:=\phi_1 *_{(\psi_1,\psi_2)} \phi_2$ on $\Alg_1 * \Alg_2$ defined by the prescription $\omega(1) = 1$ and
 \begin{equation} \omega (a_1 \cdots a_n) = \phi_{i(1)} (a_1) \cdots \phi_{i(n)} (a_n)
 \end{equation}
 for all $n \in \bn$, $i(1)\neq \cdots \neq i(n)$ elements in $\{1,2\}$ and $a_j \in \Ker \psi_{i(j)}$ for $j=1,\ldots,n$.
\end{deft}

The crucial property of the conditional free product of states is that it is again a state (Theorem 2.2 of \cite{BLS}; the result follows also from Theorem 3.1 in \cite{BocaFree}).  Given two compact quantum groups $\QG_1$ and $\QG_2$ with Haar states $h_1$ and $h_2$, and two further states $\phi_1$, $\phi_2$ on, respectively, $\Pol(\QG_1)$ and $\Pol(\QG_2)$, write $\phi_1 \diamond \phi_2$ for a respective conditionally free product:
\begin{equation} \phi_1 \diamond \phi_2 : = \phi_1 *_{(h_1,h_2)} \phi_2  -\textrm{ a state on } \Pol(\QG_1 \widehat{*}\, \QG_2).
\end{equation}
Recall Theorem \ref{repfree}, where we described the representation theory of $\QG_1 \widehat{*}\, \QG_2$.

\begin{lem} \label{lem:diamondconvolution}
Let $\QG_1$, $\QG_2$ be compact quantum groups and let $\phi_1$, $\phi_2$ be states on, respectively, $\Pol(\QG_1)$ and $\Pol(\QG_2)$. Then
$\phi_1 \diamond \phi_2$ satisfies (and is determined by) the formulas
\begin{gather}
\label{diam1} (\phi_1 \diamond \phi_2) (1) = 1, \\
\label{diam2} (\phi_1 \diamond \phi_2) \big(\big(U^{\alpha_1}\tp \cdots \tp U^{\alpha_k}\big)_{(l_1,\ldots,l_k),(m_1,\ldots,m_k)} \big) = \phi_{i(1)} (u^{\alpha_1}_{l_1,m_1}) \cdots \phi_{i(k)} (u^{\alpha_1}_{l_k,m_k})
\end{gather}
for any $k\in \bn$,  $i(j)\in \{1,2\}, i(j)\neq i(j+1), \alpha_{j} \in \Irr_{\QG_{i(j)}}$, $\alpha_j \neq 1 $, $l_j,m_j \in \{1,\ldots, n_{\alpha_j}\}$.
Moreover if $\omega_1$, $\omega_2$ are two further states on, respectively, $\Pol(\QG_1)$ and $\Pol(\QG_2)$, then we have
\begin{equation} \label{diam3}(\phi_1 \diamond \phi_2 ) \star (\omega_1 \diamond \omega_2 ) = (\phi_1 \star \omega_1 ) \diamond (\phi_2 \star \omega_2 ), \end{equation}
where $\star$ above denotes the convolution product of functionals.
\end{lem}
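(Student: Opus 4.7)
The plan is to verify the three formulas in order.

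First, \eqref{diam1} is immediate: by the cited result of Boca/BLS, $\phi_1 \diamond \phi_2$ is a state, hence normalised.

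For \eqref{diam2}, I would unwind the definition of the conditionally free product. The key input is Peter--Weyl orthogonality for compact quantum groups: if $\alpha \in \Irr_{\QG_i}$ with $\alpha \neq 1$, then every matrix coefficient $u^\alpha_{l,m}$ lies in $\Ker h_i$. Under the hypotheses $i(j) \neq i(j+1)$ and $\alpha_j \neq 1$, the product $u^{\alpha_1}_{l_1,m_1} \cdots u^{\alpha_k}_{l_k,m_k}$ is therefore an alternating word in the subspaces $\Ker h_{i(j)}$, so the defining formula for $\phi_1 *_{(h_1,h_2)} \phi_2$ yields exactly \eqref{diam2}. Together with \eqref{diam1} and the basis description in Theorem \ref{repfree}, these identities pin down $\phi_1 \diamond \phi_2$ uniquely as a linear functional on $\Pol(\QG_1 \widehat{*}\, \QG_2)$.

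For \eqref{diam3}, both sides are linear functionals on $\Pol(\QG_1 \widehat{*}\, \QG_2)$, so my plan is to evaluate them on the basis from Theorem \ref{repfree}. The right-hand side follows at once from \eqref{diam2}. For the left-hand side, the key ingredient will be Proposition \ref{subgroupfree}: each $\QG_i$ is a closed quantum subgroup of $\QG_1 \widehat{*}\, \QG_2$, so the coproduct of the free product restricts to each factor, giving $\Delta(u^{\alpha_j}_{l_j,m_j}) = \sum_{p_j} u^{\alpha_j}_{l_j,p_j} \otimes u^{\alpha_j}_{p_j,m_j}$ on generators. Expanding $\Delta$ on an alternating product $a = u^{\alpha_1}_{l_1,m_1} \cdots u^{\alpha_k}_{l_k,m_k}$ produces a sum over intermediate indices $p_1,\ldots,p_k$ in which each tensor factor is itself an alternating product of the same type. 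Applying \eqref{diam2} on each side of the tensor, the full sum factorises as $\prod_j \sum_{p_j} \phi_{i(j)}(u^{\alpha_j}_{l_j,p_j}) \omega_{i(j)}(u^{\alpha_j}_{p_j,m_j}) = \prod_j (\phi_{i(j)} \star \omega_{i(j)})(u^{\alpha_j}_{l_j,m_j})$, matching the right-hand side.

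The computations are routine once the framework is set up; the main conceptual point will be the compatibility of the free-product coproduct with that of each factor, which is what lets the multiple sum over intermediate indices $p_1,\ldots,p_k$ split as a product of $k$ independent one-variable sums, one per letter of the alternating word.
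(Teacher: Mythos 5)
Your proposal is correct and follows essentially the same route as the paper: \eqref{diam2} is obtained from $h_i(u^\alpha_{l,m})=0$ for nontrivial $\alpha$ together with the defining formula for the conditionally free product, uniqueness comes from the basis description in Theorem \ref{repfree}, and \eqref{diam3} is checked on alternating words using that the coproduct acts on coefficients of the representation $U^{\alpha_1}\tp\cdots\tp U^{\alpha_k}$ by matrix multiplication, so the sum over intermediate indices factorises letter by letter. No gaps.
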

\begin{proof}
The fact that the formulas \eqref{diam1} and \eqref{diam2} determine $\phi_1 \diamond \phi_2$ uniquely follows immediately from Theorem~\ref{repfree}. To show that these formulas hold it suffices to observe that for $i=1,2$, a nontrivial  $\alpha \in \Irr_{\QG_i}$ and any $l,m\in \{1,\ldots,n_\alpha\}$, we have that $h_i(u^{\alpha}_{l,m}) = 0$, and then use the definition of the conditionally free product.

The second part of the proof is then an explicit check of the equality in \eqref{diam3} on the elements of the form appearing in \eqref{diam2} (recall that they span
$\Pol(\QG_1 \widehat{*}\, \QG_2)$), based on applying the fact that the coproduct acts on the entries of a finite-dimensional unitary representation as `matrix multiplication':
$\Com(u_{i,j}) = \sum_{k=1}^{n_U} u_{i,k} \ot u_{k,j}$.
\end{proof}

Recall the definition of a convolution semigroup of states on a locally compact quantum group, Definition \ref{convssDef}. The last lemma yields the following result, which can be interpreted as providing a method of constructing quantum L\'evy processes (\cite{QSCC2}) on dual free products of compact quantum groups.

\begin{tw} \label{diamondsemigroup}
Let $\QG_1$, $\QG_2$ be compact quantum groups equipped, respectively, with convolution semigroups of states $(\phi_t)_{t\geq 0}$ and $(\omega_t)_{t\geq 0}$. Then $(\phi_t\diamond \omega_t)_{t\geq 0}$ is a convolution semigroup of states on $C^u(\QG_1 \widehat{*}\, \QG_2)$.

Moreover, if $L_1:\Pol(\QG_1) \to \bc$ and $L_2:\Pol(\QG_1) \to \bc$ are generating functionals of, respectively, $(\phi_t)_{t\geq 0}$ and $(\omega_t)_{t\geq 0}$, then the generating functional of $(\phi_t\diamond \omega_t)_{t\geq 0}$ is determined by the formula
\begin{align}
L\big(\big(U^{\alpha_1}\tp \cdots \tp   &   U^{\alpha_k}\big)_{(l_1,\ldots,l_k),(m_1,\ldots,m_k)} \big) = \nonumber\\ & \sum_{j=1}^k
\delta_{l_1,m_1}\cdots \delta_{l_{j-1}, m_{j-1}} L_{i(j)}(u^{\alpha_j}_{l_j,m_j}) \delta_{l_{j+1}, m_{j+1}} \cdots \delta_{l_k,m_k}, \label{gendiam}
\end{align}
again  for any $k\in \bn$,  $i(j)\in \{1,2\}, i(j)\neq i(j+1), \alpha_{j} \in \Irr_{\QG_{i(j)}}$, $\alpha_j \neq 1 $, $l_j,m_j \in \{1,\ldots, n_{\alpha_j}\}$.
\end{tw}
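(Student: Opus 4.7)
The plan is to verify the three conditions of Definition~\ref{convssDef} and then compute the generator by a direct Taylor expansion on the linear basis of $\Pol(\QG_1 \widehat{*}\, \QG_2)$ provided by Theorem~\ref{repfree}.

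\textbf{Semigroup property.} The identity
\begin{equation*}
(\phi_{s+t}\diamond\omega_{s+t}) = (\phi_s\star\phi_t)\diamond(\omega_s\star\omega_t) = (\phi_s\diamond\omega_s)\star(\phi_t\diamond\omega_t)
\end{equation*}
is immediate from \eqref{diam3} in Lemma~\ref{lem:diamondconvolution}. For the initial condition, observe that $\phi_0=\epsilon_{\QG_1}$ and $\omega_0=\epsilon_{\QG_2}$; applying \eqref{diam2} to a spanning element $\big(U^{\alpha_1}\tp\cdots\tp U^{\alpha_k}\big)_{(l_1,\ldots,l_k),(m_1,\ldots,m_k)}$ yields $\prod_{j=1}^k \delta_{l_j,m_j}$, which agrees with the counit of $\QG_1 \widehat{*}\, \QG_2$ on this element. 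For pointwise continuity at $0$, since the $\phi_t\diamond\omega_t$ are states (hence uniformly bounded on $C^u(\QG_1 \widehat{*}\, \QG_2)$) and $\Pol(\QG_1 \widehat{*}\, \QG_2)$ is norm-dense, by a standard $3\varepsilon$-argument it suffices to check that $(\phi_t\diamond\omega_t)(a)\to\epsilon(a)$ for $a$ in the spanning set of Theorem~\ref{repfree}. Formula \eqref{diam2} reduces this to the continuity of the factors $\phi_t(u^{\alpha_j}_{l_j,m_j})\to\delta_{l_j,m_j}$ and $\omega_t(u^{\alpha_j}_{l_j,m_j})\to\delta_{l_j,m_j}$, which holds by assumption on the original semigroups via Lemma~\ref{convgen}.

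\textbf{The generating functional.} By Lemma~\ref{convgen}, $L$ is obtained as the pointwise limit $L(a)=\lim_{t\to 0^+} t^{-1}\bigl(\epsilon(a)-(\phi_t\diamond\omega_t)(a)\bigr)$. It therefore suffices to evaluate this limit on the spanning elements of the form $a=\big(U^{\alpha_1}\tp\cdots\tp U^{\alpha_k}\big)_{(l_1,\ldots,l_k),(m_1,\ldots,m_k)}$ with alternating indices. Write $f_j(t):=\phi_{i(j),t}(u^{\alpha_j}_{l_j,m_j})$ (where $\phi_{1,t}=\phi_t$ and $\phi_{2,t}=\omega_t$). By Lemma~\ref{convgen} applied to the respective semigroups on $\QG_1$ and $\QG_2$, one has the expansion $f_j(t)=\delta_{l_j,m_j}-tL_{i(j)}(u^{\alpha_j}_{l_j,m_j})+o(t)$ as $t\to 0^+$. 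By \eqref{diam2}, $(\phi_t\diamond\omega_t)(a)=\prod_{j=1}^k f_j(t)$. Expanding this product and collecting first-order terms,
\begin{equation*}
\prod_{j=1}^k f_j(t) = \prod_{j=1}^k \delta_{l_j,m_j} - t\sum_{j=1}^k \Bigl(\prod_{r<j}\delta_{l_r,m_r}\Bigr) L_{i(j)}(u^{\alpha_j}_{l_j,m_j}) \Bigl(\prod_{r>j}\delta_{l_r,m_r}\Bigr) + o(t),
\end{equation*}
which upon dividing by $t$ and passing to the limit yields exactly \eqref{gendiam}.

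\textbf{Main obstacle.} None of the steps is technically deep; the only subtlety is ensuring that the $o(t)$ remainders in the product expansion can be collected uniformly in a fixed spanning element (but $k$ is fixed for each individual $a$, so this is harmless) and that the density argument for continuity is valid — which it is because each $\phi_t\diamond\omega_t$ extends uniquely to a state on the universal $C^*$-algebra $C^u(\QG_1 \widehat{*}\, \QG_2)=C^u(\QG_1)*C^u(\QG_2)$ (existence of this extension is what the theorem of Boca/Bo\.zejko--Leinert--Speicher cited after the definition of conditionally free product gives us), so the uniform bound $\|\phi_t\diamond\omega_t\|=1$ is automatic.
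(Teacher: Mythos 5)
Your proposal is correct and takes essentially the same route as the paper's proof: the semigroup law comes from \eqref{diam3}, the initial condition and continuity are checked on the spanning elements of Theorem~\ref{repfree} using \eqref{diam1} and \eqref{diam2} together with the uniform bound coming from the functionals being states, and the generator formula \eqref{gendiam} is obtained from the Leibniz rule applied to the product in \eqref{diam2} via the limit formula \eqref{e6.24}. You merely write out explicitly the first-order expansion that the paper compresses into the phrase ``obtained from the Leibniz rule''.
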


\begin{proof}
For each $t\geq 0$, define $\mu_t:=\phi_t\diamond \omega_t$ and consider the family of states $(\mu_t)_{t\geq 0}$. By Lemma \ref{lem:diamondconvolution}, this family satisfies the first property in Definition \ref{convssDef}. Further, as for any finite-dimensional unitary representation $U=(u_{i,j})_{i,j=1}^{n_U}$ of a compact quantum group we have that $\epsilon(u_{i,j}) = \delta_{i,j}$, the formulas \eqref{diam1} and \eqref{diam2} imply that we have $\epsilon_{\QG_1} \diamond\, \epsilon_{\QG_2} = \epsilon_{\QG_1 \widehat{*}\, \QG_2}$, so that $\mu_0 = \epsilon$. Finally, as each $\mu_t$ is a state, it suffices to check the convergence in part (iii) of Definition \ref{convssDef} on the elements of the type appearing in \eqref{diam2}, where it follows from the formulas describing the respective counits.

Taking once again into account the equality \eqref{diam2}, the formula \eqref{gendiam} is obtained from the Leibniz rule, as $L$ is determined by the formula (\ref{e6.24}).
\end{proof}

We return now to the main point of this section.

\begin{tw} \label{freeHAP}
Let $\QG_1$, $\QG_2$ be discrete quantum groups. Then their free product $\QG_1* \QG_2$ has the Haagerup property if and only if both $\QG_1$ and $\QG_2$ have the Haagerup property.
\end{tw}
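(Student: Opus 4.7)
The ``only if'' direction is immediate from Proposition~\ref{subgroupfree} combined with Proposition~\ref{HAP:subgroups}, since each $\QG_i$ is a closed quantum subgroup of $\QG_1 * \QG_2$, and any discrete quantum group is coamenable (its counit is bounded on $c_0$). So the real work is in the ``if'' direction, and my plan is to use the characterisation of the Haagerup property via symmetric proper generating functionals on the dual (Theorem~\ref{generators}) and to build such a functional on $\widehat{\QG_1*\QG_2} = \widehat{\QG_1}\,\widehat{*}\,\widehat{\QG_2}$ out of ones on each factor, using the conditionally free product machinery developed in this section.

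The first step is to invoke Lemma~\ref{lem:boundedperturb} to secure symmetric proper generating functionals $L_i:\Pol(\widehat{\QG_i})\to\bc$ for $i=1,2$ that satisfy the normalisation $L_i^{\alpha}\geq I_{n_\alpha}$ for every $\alpha\in\Irr_{\widehat{\QG_i}}$. Passing via Lemma~\ref{convgen} to the associated convolution semigroups of states $(\phi_t^{(i)})_{t\geq 0}$ on $C^u(\widehat{\QG_i})$, I will then apply Theorem~\ref{diamondsemigroup} to the diamond product $(\phi_t^{(1)}\diamond\phi_t^{(2)})_{t\geq 0}$, which is a convolution semigroup of states on $C^u(\widehat{\QG_1}\,\widehat{*}\,\widehat{\QG_2})$ with an explicit generating functional $L$ given by formula \eqref{gendiam}.

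The heart of the argument is then to verify that this $L$ is symmetric and proper. Reading off \eqref{gendiam} matricially, for an irreducible $\beta=U^{\alpha_1}\tp\cdots\tp U^{\alpha_k}$ of $\widehat{\QG_1}\,\widehat{*}\,\widehat{\QG_2}$ one has
\begin{equation}
L^\beta=\sum_{j=1}^k I_{n_{\alpha_1}}\otimes\cdots\otimes L_{i(j)}^{\alpha_j}\otimes\cdots\otimes I_{n_{\alpha_k}},
\end{equation}
each summand of which is self-adjoint, giving symmetry at once. For properness, the normalisation $L_i^{\alpha_j}\geq I_{n_{\alpha_j}}$ upgrades each summand to $\geq I_{n_\beta}$, hence $L^\beta\geq k\,I_{n_\beta}$. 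Consequently, for any $M>0$ every alternating word of length $k\geq M$ automatically contributes a $\beta$ with $L^\beta\geq M\,I_{n_\beta}$, so the ``bad'' irreducibles must come from words of length $k<M$; for each such fixed $k$, properness of $L_1$ and $L_2$ individually confines each $\alpha_j$ with $\lambda_{\min}(L_{i(j)}^{\alpha_j})<M$ to a finite set, leaving only finitely many alternating words of length $<M$ to consider in total. Thus $L$ is proper, and Theorem~\ref{generators} finishes the proof.

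The step I expect to require the most care is the properness verification on short alternating words; it is exactly here that the normalisation $L_i^{\alpha}\geq I_{n_\alpha}$ from Lemma~\ref{lem:boundedperturb} pays off, since without a uniform lower bound on the individual summands one could not separate the contribution of word length from the contribution of the ``sizes'' of the constituent $\alpha_j$'s. Everything else -- checking that $L$ indeed satisfies the required formal properties of a generating functional, and transporting back to the Haagerup property of $\QG_1*\QG_2$ -- is packaged in the earlier results.
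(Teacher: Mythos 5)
Your proof is correct, and it is in fact exactly the alternative route that the authors themselves sketch in the remark immediately following their proof of Theorem~\ref{freeHAP}. The paper's actual argument stays at the level of states: it takes sequences $(\phi^1_k)_k$, $(\phi^2_k)_k$ from Lemma~\ref{lem:boundedperturbstates} (whose key extra feature is the uniform decay $\|(\mc F\phi^i_k)^\alpha\|\leq \exp(-\tfrac1k)$ for nontrivial $\alpha$), forms $\mu_k=\phi^1_k\diamond\phi^2_k$, observes that $\mc F$ turns the conditionally free product into a tensor product, $(\mc F\mu_k)^{\alpha_1\cdots\alpha_l}=\bigotimes_j(\mc F\phi^{i(j)}_k)^{\alpha_j}$, and then uses the resulting bound $\|(\mc F\mu_k)^{\alpha_1\cdots\alpha_l}\|\leq\exp(-\tfrac{l}{k})$ to verify the conditions of Proposition~\ref{lemma:hap_dis_case}. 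You instead work at the infinitesimal level via Theorem~\ref{generators}(iii): Lemma~\ref{lem:boundedperturb} normalises the generating functionals so that $L_i^\alpha\geq I_{n_\alpha}$, and the Leibniz-type formula \eqref{gendiam} makes $L^\beta$ a sum of $k$ summands each bounded below by $I_{n_\beta}$, so word length and the ``size'' of the individual letters contribute additively to the lower bound. The two mechanisms are precisely dual: the paper's multiplicative norm decay $\exp(-l/k)$ in the word length $l$ corresponds to your additive lower bound $L^\beta\geq k\,I_{n_\beta}$, and your case split (long words are automatically good; short bad words force every letter into a finite exceptional set) is the correct and complete way to finish the properness check. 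What your route buys is a cleaner conceptual statement (a single proper symmetric generating functional, hence via Theorem~\ref{HAP-cocycle} also a proper real cocycle, on the free product); what the paper's route buys is that it avoids any discussion of generators and works directly with the approximate identity characterisation. Both rest on the same foundations, namely Wang's description of $\Irr_{\widehat{\QG_1}\widehat{*}\,\widehat{\QG_2}}$ and the conditionally free product construction, and your ``only if'' direction coincides with the paper's.
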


\begin{proof}
If $\QG_1* \QG_2$ has the Haagerup property, then both $\QG_1$ and $\QG_2$ have the Haagerup property by Propositions \ref{HAP:subgroups} and \ref{subgroupfree} (note that discrete quantum groups are automatically coamenable).

Conversely, let $(\phi^1_k)_{k\in \bn}$ and $(\phi^2_k)_{k\in \bn}$ be sequences of states on, respectively, $\Pol(\widehat{\QG_1})$ and $\Pol(\widehat{\QG_2})$  satisfying the conditions in Lemma~\ref{lem:boundedperturbstates}.  For $k\in\bn$, let $\mu_k =
\phi^1_k \diamond \phi^2_k$, a state on $\Pol(\widehat{\QG_1} \widehat{*}\, \widehat{\QG_2})$.  Note that the formula \eqref{diam2} interpreted matricially says that for any $l\in \bn$,  $i(j)\in \{1,2\}, i(j)\neq i(j+1), \alpha_{j} \in \Irr_{\QG_{i(j)}}$, $\alpha_j \neq 1 $,
\begin{equation} (\mathcal F\mu_k)^{\alpha_1\cdots \alpha_l} = \bigotimes_{j=1}^l (\mathcal F\phi^{i(j)}_k)^{\alpha_j}.
\end{equation}
It is then elementary to check that the sequence  $(\mu_k)_{k\in \bn}$ satisfies the conditions of Lemma~\ref{lemma:hap_dis_case} -- the fact that the respective matrices belong to $\bigoplus_{\beta \in \Irr_{\widehat{\QG_1} \widehat{*}\, \widehat{\QG_2}}} M_{n_{\beta}}$ follows from the fact that $\|\mu_k^{\alpha_1\cdots \alpha_l}\|\leq \exp(-\frac{l}{k})$.
\end{proof}

\begin{rem}
We could  also prove the backward implication of the above theorem using Lemma \ref{lem:boundedperturb} and the equivalence of conditions (i) and (iii) in Theorem \ref{generators}.  Indeed, let $L_1:\Pol(\widehat{\QG_1})\to \bc$ and  $L_2:\Pol(\widehat{\QG_2})\to \bc$ be proper symmetric generating functionals as in Lemma~\ref{lem:boundedperturb}. Denote the convolution semigroups of states associated with $L_1$ and $L_2$ via Lemma~\ref{convgen} by, respectively, $(\phi_t)_{t\geq 0}$ and $(\omega_t)_{t\geq 0}$ and let $L:\Pol(\widehat{\QG_1} \widehat{*}\, \widehat{\QG_2})\rightarrow\mathbb C$ be the generator of the convolution semigroup of states $(\phi_t\diamond \omega_t)_{t\geq 0}$. Then using arguments similar to those in the proof of Theorem~\ref{freeHAP} and exploiting Theorem~\ref{diamondsemigroup} one can show that $L$ is a proper symmetric generating functional.
\end{rem}

\begin{rem}
Note that recently A.\,Freslon showed in \cite{Fres} that \emph{weak amenability} (with the Cowling-Haagerup constant equal 1) is preserved under taking free products of discrete quantum groups, extending a result of E.\,Ricard and Q.\,Xu for discrete groups (\cite{RicXu}).
\end{rem}

\begin{exam}
Theorem \ref{freeHAP} offers a method of constructing non-amenable, non-unimodular discrete quantum groups with the Haagerup property: it suffices to take the free product of a non-amenable discrete quantum group with the Haagerup property (such as for example $\widehat{U_N^+}$ for $N\geq2$, see \cite{Brannan}) and a non-unimodular amenable discrete quantum group (such as for example $\widehat{SU_q(2)}$ for $q\in (0,1)$). Another path to the construction of such examples leads via monoidal equivalence, as observed in Section 6 of the article \cite{Fres2}. Note that the notion of the Haagerup property for quantum groups which seems to be implicitly considered in \cite{Fres2}, formed in term of the existence of suitable state-induced multipliers on $C^u(\hh \G)$, is equivalent to the one studied here, as follows from Theorem \ref{equivHAP}. The main result of \cite{Fres2} has very recently been extended  in \cite{DCY}, by other techniques, to duals of all free orthogonal quantum groups.
\end{exam}

For the rest of the section we return to the context of discrete unimodular quantum groups.

\begin{rem}
As mentioned in the introduction to this section, the more difficult implication of Theorem \ref{freeHAP} for discrete \emph{unimodular} quantum groups can be proved via Theorem \ref{groupalgKac}. Indeed, assume that $\QG_1$, $\QG_2$  are discrete unimodular quantum groups with the Haagerup property.
Then both finite von Neumann algebras $L^{\infty}(\widehat{\QG_1})$ and $L^{\infty}(\widehat{\QG_2})$ have the Haagerup approximation property. By Theorem 2.3 of \cite{j} (see also \cite{Bo93}) so does the finite von Neumann algebra $L^{\infty}(\widehat{\QG_1}) *  L^{\infty}(\widehat{\QG_2}) \cong L^{\infty}(\widehat{\QG_1* \QG_2})$.
\end{rem}

Using an extension of the method described in the  above remark  and the results of Boca from \cite{Bo93} we can prove the following results.  The first relates to the free products of unimodular discrete quantum groups with amalgamation over a finite quantum subgroup (\cite{Ve04}) and the second to HNN extensions of unimodular discrete quantum groups (\cite{Fi12}).  Note that both these constructions leave the class of unimodular discrete quantum groups invariant, as can be deduced from the explicit formulas for the Haar states of the dual quantum groups, which imply that these Haar states remain tracial (we refer for the details to \cite{Ve04} and \cite{Fi12}). The corresponding results for classical groups can be found for example in \cite{book}.

\begin{prop} \label{amalgam}
If $\G_1$ and $\G_2$ are unimodular discrete quantum groups with a common finite closed quantum subgroup $\mathbb{H}$, then the amalgamated free product $\G=\G_1\underset{\mathbb{H}}{*}\G_1$ (see \cite{Ve04}) has the  Haagerup property if and only if both $\G_1$ and $\G_2$ have the  Haagerup property.
\end{prop}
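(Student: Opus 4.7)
The plan is to mirror the strategy used in the non-amalgamated setting (the remark preceding this proposition), passing through the von Neumann algebraic Haagerup approximation property via Theorem~\ref{groupalgKac} and invoking Boca's extension of Jolissaint's result from \cite{Bo93}. Recall that \cite{Bo93} shows stability of the von Neumann algebraic Haagerup approximation property under amalgamated free products of finite von Neumann algebras over a common finite-dimensional subalgebra, which is precisely the situation that will arise here since $\mathbb{H}$ is finite.

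For the easy direction, if $\G$ has the Haagerup property then so do $\G_1$ and $\G_2$: by the construction in \cite{Ve04}, both $\G_1$ and $\G_2$ sit inside $\G$ as closed quantum subgroups (analogously to Proposition~\ref{subgroupfree} in the non-amalgamated case), and since every discrete quantum group is coamenable, Proposition~\ref{HAP:subgroups} delivers the conclusion at once. For the forward direction, I would first invoke Theorem~\ref{groupalgKac} to translate the hypothesis into the statement that both $L^\infty(\hh{\G_1})$ and $L^\infty(\hh{\G_2})$ have the Haagerup approximation property with respect to their faithful tracial Haar states. The crucial step is then to extract from \cite{Ve04} the von Neumann algebraic identification
\[
L^\infty(\hh{\G}) \;\cong\; L^\infty(\hh{\G_1}) \underset{L^\infty(\hh{\mathbb{H}})}{*} L^\infty(\hh{\G_2}),
\]
as a tracial amalgamated free product, taken with respect to the canonical trace-preserving conditional expectations onto the finite-dimensional algebra $L^\infty(\hh{\mathbb{H}})$ (whose existence is guaranteed by the finiteness of $\mathbb{H}$). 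Boca's theorem then transports the Haagerup approximation property to $L^\infty(\hh{\G})$, and a final application of Theorem~\ref{groupalgKac} converts this back into the Haagerup property for $\G$.

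The main obstacle I expect will be the careful verification that the above identification is compatible in the precise form required by \cite{Bo93}: one has to check that the canonical conditional expectations from each $L^\infty(\hh{\G_i})$ onto $L^\infty(\hh{\mathbb{H}})$ induced by the closed quantum subgroup structure agree with those appearing in the amalgamated free product construction, and that the Haar traces on all four algebras are mutually compatible under the embeddings. In view of the framework developed in \cite{Ve04}, this should amount to careful bookkeeping rather than to a substantive new difficulty.
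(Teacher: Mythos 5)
Your proposal is correct and follows essentially the same route as the paper: Vergnioux's identification $L^\infty(\hh\G)\cong L^\infty(\hh{\G_1})\underset{L^\infty(\hh{\QH})}{*}L^\infty(\hh{\G_2})$, Boca's stability result for amalgamated free products over a finite-dimensional subalgebra, and Theorem~\ref{groupalgKac} to pass between the quantum group and von Neumann algebraic formulations. The only cosmetic difference is that you handle the ``only if'' direction via the closed quantum subgroup argument, whereas the paper gets both directions at once from the ``if and only if'' form of Boca's theorem.
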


\begin{proof} By the results of \cite{Ve04}, the dual von Neumann algebra $L^{\infty}(\widehat{\G})$ is isomorphic to $\mc M_1\underset{B}{*} \mc M_2$ where $\mc M_i=L^{\infty}(\widehat{\G_i})$ for $i=1,2$ and $\mc B=L^{\infty}(\widehat{\mathbb{H}})$. Since $B$ is finite dimensional, it follows from the results of \cite{Bo93} that $\mc M_1\underset{\mc B}{*} \mc M_2$ has the (von Neumann algebraic) Haagerup approximation property if and only if both $\mc M_1$ and $\mc M_2$ have the Haagerup approximation property.
\end{proof}

\begin{prop} \label{HNN}
If $\G$ is a discrete quantum group with a finite closed quantum subgroup $\mathbb{H}$, so that $\theta\,:\,C(\widehat{\mathbb{H}})\rightarrow C(\widehat{\G})$ is an injective unital $*$-homomorphism which intertwines the comultiplications, then the HNN extension HNN$(\G,\mathbb{H},\theta)$ (see \cite{Fi12}) has the  Haagerup property if and only if $\G$ has the  Haagerup property.
\end{prop}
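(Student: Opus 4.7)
The plan is to reduce Proposition~\ref{HNN} to the von Neumann algebraic Haagerup approximation property, exactly parallel to the strategy used in Proposition~\ref{amalgam}. Since $\mathbb{H}$ is finite, both $\G$ and $\G'=\mathrm{HNN}(\G,\mathbb{H},\theta)$ are discrete and unimodular (the explicit description of the Haar state in \cite{Fi12} shows that it remains tracial after the HNN construction). Therefore, by Theorem~\ref{groupalgKac}, the Haagerup property for $\G$ and for $\G'$ is equivalent to the von Neumann algebraic Haagerup approximation property of the finite tracial von Neumann algebras $\mc M=L^\infty(\hh\G)$ and $\mc N=L^\infty(\hh{\G'})$, respectively. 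By the results of \cite{Fi12}, one has $\mc N\cong\mathrm{HNN}(\mc M,\mc B,\theta)$, where $\mc B=L^\infty(\hh{\mathbb{H}})$ is a finite-dimensional subalgebra.

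For the easy direction, I would note that the canonical inclusion $\mc M\hookrightarrow\mc N$ intertwines the coproducts, so $\G$ is a closed quantum subgroup of $\G'$ in the sense of Vaes. As discrete quantum groups are coamenable, Proposition~\ref{HAP:subgroups} immediately yields that the Haagerup property for $\G'$ descends to $\G$.

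For the nontrivial direction, I would use Ueda's trick: the HNN extension $\mathrm{HNN}(\mc M,\mc B,\theta)$ can be realised as a compressed corner
\begin{equation}
\mathrm{HNN}(\mc M,\mc B,\theta)\cong p\bigl(\mc M\underset{\mc B}{*}(\mc B\otimes L(\mathbb{F}_2))\bigr)p
\end{equation}
for a suitable projection $p\in\mc B\otimes L(\mathbb{F}_2)$ of finite trace. The amenable (hence Haagerup) factor $\mc B\otimes L(\mathbb{F}_2)$ together with $\mc M$ form an amalgamated free product over the finite-dimensional $\mc B$; by Boca's theorem \cite{Bo93}, this amalgamated free product inherits the Haagerup approximation property from $\mc M$. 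Since the Haagerup approximation property for a finite von Neumann algebra passes to any compression by a projection of finite trace, $\mc N$ has the Haagerup approximation property, which concludes the proof via Theorem~\ref{groupalgKac}.

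The main potential obstacle is verifying that Ueda's corner description of an HNN extension applies in the generality of tracial von Neumann algebras over a finite-dimensional subalgebra as it is used here, and verifying that the specific model of $\mathrm{HNN}(\mc M,\mc B,\theta)$ coming from \cite{Fi12} really matches the standard operator-algebraic HNN construction to which Boca's amalgamated free product result applies; once this identification is in place the argument is formal. A direct alternative, if preferred, would be to build a net of $\hh{h}$-preserving unital completely positive multipliers on $\mc N$ inducing compact operators on $L^2(\mc N)$ by applying the conditional free product construction of Theorem~\ref{diamondsemigroup} iteratively along the reduced-word decomposition of $\mc N$ relative to $\mc B$, but this route bypasses the cleaner reduction above.
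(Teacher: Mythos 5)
Your overall strategy --- reduce to the von Neumann algebraic Haagerup approximation property via Theorem~\ref{groupalgKac}, dispose of the easy direction with Proposition~\ref{HAP:subgroups}, and for the hard direction realise the HNN extension as a corner of an amalgamated free product over a finite-dimensional algebra so that Boca's theorem and the stability of the Haagerup approximation property under compression by a projection apply --- is exactly the route the paper takes. The gap is in the one step you yourself flagged: the corner realisation you write down,
\[
\mathrm{HNN}(\mc M,\mc B,\theta)\;\cong\;p\bigl(\mc M \underset{\mc B}{*}(\mc B\ot L(\mathbb{F}_2))\bigr)p,
\]
is not correct. Test it in the degenerate case $\mc M=\mc B=\bc$, $\theta=\mathrm{id}$: the HNN extension is then $L(\mathbb{Z})$, while the right-hand side is $pL(\mathbb{F}_2)p$, which is an interpolated free group factor (in particular a II$_1$ factor) for every nonzero projection $p$. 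So no such isomorphism can hold, and this is not Ueda's realisation.

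The correct statement, which the paper quotes from \cite[Remark 4.6]{FV12}, is
\[
\mathrm{HNN}(\mc M,\mc B,\theta)\;\cong\;p\Bigl(\bigl(M_2(\bc)\ot\mc M\bigr)\underset{\mc B\oplus\mc B}{*}\bigl(M_2(\bc)\ot\mc B\bigr)\Bigr)p,
\]
where $\mc B\oplus\mc B$ embeds into $M_2(\bc)\ot\mc M$ as $(b_1,b_2)\mapsto\mathrm{diag}(b_1,\theta(b_2))$ and diagonally into $M_2(\bc)\ot\mc B$, and $p=e_{11}\ot 1$. Here the amalgam $\mc B\oplus\mc B$ is still finite dimensional, the second free factor $M_2(\bc)\ot\mc B$ is finite dimensional (so trivially has the Haagerup approximation property), and $M_2(\bc)\ot\mc M$ has it exactly when $\mc M$ does; Boca's results \cite{Bo93} then give the property for the amalgamated free product, and \cite[Theorem 2.3(i)]{j} passes it to the corner. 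With this substitution your argument closes and coincides with the paper's proof. (Your alternative suggestion of iterating the conditional free product construction along reduced words is not developed enough to assess, and is not needed once the correct corner realisation is in hand.)
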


\begin{proof} From the computation of the Haar state in \cite{Fi12} we know that the dual von Neumann algebra of the HNN extension is equal to ${\rm HNN}(\mc M,\mc N,\theta)$ where $\mc M=L^{\infty}(\widehat{\mathbb{G}})$ and $\mc N=L^{\infty}(\widehat{\mathbb{H}})$ and $\theta$ is the induced unital normal *-homomorphism at the von Neumann algebraic level (see \cite{FV12} for the HNN construction of von Neumann algebras). By \cite[Remark 4.6]{FV12} ${\rm HNN}(\mc M,\mc N,\theta)$ is isomorphic to a von Neumann algebra of the form $p(M_2(\bc)\ot \mc M)\underset{N\oplus N}{*}(M_2(\bc)\ot \mc N)p$ which has the Haagerup approximation property whenever $\mc M$ has the Haagerup approximation property (by the results of \cite{Bo93} and \cite{j}, Theorem 2.3 (i)). The other implication follows from Proposition \ref{HAP:subgroups}, since $\G$ is a closed quantum subgroup of the HNN extension in question.
\end{proof}

\end{document}